\newtheorem{propo}{Proposition}[section]
\newtheorem{corol}[propo]{Corollary}
\newtheorem{theor}[propo]{Theorem}
\newtheorem{lemma}[propo]{Lemma}
\theoremstyle{definition}
\newtheorem{defin}[propo]{Definition}
\newtheorem{examp}[propo]{Example}
\newtheorem{examps}[propo]{Examples}
\newtheorem{assum}[propo]{Assumption}
\theoremstyle{definition}
\newtheorem{remar}[propo]{Remark}
\newtheorem{question}[propo]{Question}
\newtheorem{oppro}[propo]{Open Problems}
\newtheorem{conve}[propo]{Convention}
\numberwithin{equation}{section}
\newcommand{\ad }{\mathrm{ad}\,}
\newcommand{\Aut }{\mathrm{Aut}}              
\newcommand{\cC }{\mathcal{C}}
\newcommand{\cM }{\mathcal{M}}
\newcommand{\co }{\mathrm{co}\,}              
\newcommand{\copr }{\varDelta }               
\newcommand{\fie }{\Bbbk }
\newcommand{\Hom }{\mathrm{Hom}}
\newcommand{\id }{\mathrm{id}}
\newcommand{\lact }{\cdot }
\newcommand{\NA }{\mathcal{B}}
\newcommand{\ndN }{\mathbb{N}}
\newcommand{\ndZ }{\mathbb{Z}}
\newcommand{\op }{\mathrm{op}}
\newcommand{\ot }{\otimes }
\newcommand{\V }[1]{V^{#1}}
\newcommand{\ydH }{ {}^{\phantom{.}H}_H\mathcal{YD}}
\renewcommand{\_}[1]{_{(#1)}}
\renewcommand{\^}[1]{^{(#1)}}
\newcommand{\sub}{\subseteq}
\newcommand{\no}{\mathbb{N}_0}
\newcommand{\res}{\mid}
\newcommand{\cS}{\mathcal{S}}
\newcommand{\sw}[1]{_{(#1)}}
\newcommand{\swo}[1]{^{(#1)}}
\newcommand{\rat}{\mathrm{rat}}
\newcommand{\mZ}{\mathbb{Z}}
\newcommand{\rydH }{\mathcal{YD}^{\phantom{.}H}_H}
\newcommand{\swe}[1]{_{[#1]}}
\newcommand{\sws}[1]{_{\langle#1\rangle}}
\newcommand{\swoe}[1]{^{[#1]}}
\newcommand{\swos}[1]{^{\langle#1\rangle}}
\newcommand{\swoss}[1]{^{\langle\langle#1\rangle\rangle}}
\newcommand{\ydA}{ {}^{\phantom{.}{A}}_{A}\mathcal{YD}}
\newcommand{\ydB}{ {}^{\phantom{.}{B}}_{B}\mathcal{YD}}
\newcommand{\ydBC}{ {}^{\phantom{.}C}_B\mathcal{YD}}
\newcommand{\rydC}{\mathcal{YD}^{\phantom{.}C}_C}
\newcommand{\rydBC}{\mathcal{YD}^{\phantom{.}C}_B}
\newcommand{\ydRH}{ {}^{\phantom{}R \#H}_{\phantom{aa.}H}\mathcal{YD}}
\newcommand{\ydBNH}{ {}^{\phantom{.}\NA (N)\#H}_{\NA (N)\#H}\mathcal{YD}}
\newcommand{\cZ}{\mathcal{Z}}
\newcommand{\cL}{\mathcal{L}}
\newcommand{\cE}{\mathcal{E}}
\newcommand{\cD}{\mathcal{D}}
\newcommand{\cF}{\mathcal{F}}
\newcommand{\cR}{\mathcal{R}}
\newcommand{\Fd}[1]{\mathcal{F}^{\delta}_{#1}}
\newcommand{\Fm}[1]{\mathcal{F}^{\mu}_{#1}}
\newcommand{\ydAH}{ {}^{\phantom{.}H}_A\mathcal{YD}}
\newcommand{\dual}{\mathrm{D}}
\newcommand{\uydRH}{{}_{R \#H}^{\phantom{aaa.}H}\mathcal{YD}}
\newcommand{\rydRsmash}{\mathcal{YD}^{\phantom{.}R \# H}_{R \# H}}
\newcommand{\rydRsmashrat}{{}_{\rat}{\mathcal{YD}^{\phantom{.}R \# H}_{R \# H}}}
\newcommand{\ydRsmash}{ {}^{\phantom{.}R \# H}_{R \# H}\mathcal{YD}}
\newcommand{\ydRsmashrat}{ {}^{\phantom{.}R \# H}_{R \# H}\mathcal{YD}_{\rat}}
\newcommand{\ydRveesmash}{ {}^{\phantom{.}R^{\vee} \# H}_{R^{\vee} \# H}\mathcal{YD}}
\newcommand{\ydRveesmashrat}{ {}^{\phantom{.}R^{\vee} \# H}_{R^{\vee} \# H}\mathcal{YD}_{\rat}}
\newcommand{\rydRH}{\mathcal{YD}^{\phantom{.}R \# H}_H}
\newcommand{\ydRveeH}{{{}^{\phantom{aaaaa.}H}_{\phantom{a}R^{\vee} \# H}\mathcal{YD}}}
\newcommand{\ydBNdualsmashrat}{ {}^{\phantom{.}\NA(N^*) \# H}_{\NA(N^*) \# H}\mathcal{YD}_{\rat}}
\newcommand{\ydBNsmashrat}{ {}^{\phantom{.}\NA(N) \# H}_{\NA(N) \# H}\mathcal{YD}_{\rat}}
\newcommand{\ydRveeHrat}{{{}^{\phantom{aaaaa.}H}_{\phantom{a}R^{\vee} \# H}\mathcal{YD}_{\rat}}}
\newcommand{\ga}{\gamma}
\newcommand{\la}{\lambda}
\newcommand{\si}{\sigma}
\newcommand{\ph}{\varphi}
\newcommand{\ti}[1]{{\widetilde{#1}}}
\newcommand{\restr}{\mathrm{res}}
\newcommand{\Fu}{\Omega}
\newcommand{\fu}{\omega}
\title[Yetter-Drinfeld modules]{Yetter-Drinfeld modules over bosonizations of dually paired Hopf algebras}
\author{I. Heckenberger}
\address{Istv\'an Heckenberger,
Philipps-Iniversit\"at Marburg,
Fachbereich Matyhematik und Informatik,
Hans-Meerwein-Strasse
D-35032 Marburg, Germany}
\email{heckenberger@mathematik.uni-marburg.de}
\author{H.-J. Schneider}
\address{
Hans-J\"urgen Schneider,
Mathematisches Institut,
Universit\"at M\"unchen,
Theresienstr. 39,
D-80333 Munich, Germany}
\email{Hans-Juergen.Schneider@mathematik.uni-muenchen.de}
\thanks{The work of I.H. is supported by DFG within the Heisenberg program of DFG}
\begin{document}

\begin{abstract}
Let $(R^{\vee},R)$ be a dual pair of Hopf algebras in the category of Yetter-Drinfeld modules over
a Hopf algebra $H$ with bijective antipode. We show that there is a braided monoidal isomorphism
between rational left Yetter-Drinfeld modules over the bosonizations of $R$ and of $R^{\vee}$, respectively.
As an application of this very general category isomorphism we obtain a natural proof of the existence
of reflections of Nichols algebras of semisimple Yetter-Drinfeld modules over $H$.
\end{abstract}

\maketitle

\section*{Introduction}

Let $H$ be a Hopf algebra with bijective antipode over the base field $\fie$, and let $(R^{\vee},R)$
together with a bilinear form $\langle \;,\;\rangle : R^{\vee} \ot R \to \fie$ be a dual pair of
Hopf algebras in the braided category $\ydH$ of left Yetter-Drinfeld modules over $H$
(see Definition \ref{defin:pair}).
The smash products or bosonizations $R^{\vee} \# H$ and $R \# H$ are Hopf algebras in the usual sense.
We are interested in their braided monoidal  categories of left Yetter-Drinfeld modules.
By our first main result, Theorem \ref{theor:third}, there is a braided monoidal isomorphism
\begin{align}\label{intro1}
(\Fu,\fu) : \ydRsmashrat \to \ydRveesmashrat,
\end{align}
where the index rat means Yetter-Drinfeld modules which are rational over $R$ and over $R^{\vee}$
(see Definition \ref{defin:pair}). In particular, $(\Fu,\fu)$ maps Hopf algebras to Hopf algebras.
For $X \in \ydRsmashrat$, $\Fu(X)=X$ as a Yetter-Drinfeld module over $H$.

The origin of the
isomorphism \eqref{intro1} is the standard correspondence between comodule structures over a
coalgebra and module structures over the dual algebra. In Theorem \ref{theor:first} we first prove a monoidal isomorphism between right and left relative Yetter-Drinfeld modules, and hence a braided monoidal isomorphism between their Drinfeld centers. Then we show in Theorem \ref{theor:second} that this isomorphism preserves the subcategories of right and left Yetter-Drinfeld modules we want. Finally, in Theorem \ref{theor:third} we change the sides to left Yetter-Drinfeld modules on both sides. Without this strategy, it would be hard to guess and to prove the correct formulas.

Our motivation to find such an isomorphism of categories comes from the theory
of Nichols algebras which in turn are fundamental for the classification of
pointed Hopf algebras. If $M \in \ydH$, the Nichols algebra $\NA(M)$ is a
braided Hopf algebra in $\ydH$ which is the unique graded quotient of the
tensor algebra $T(M)$ such that $M$ coincides with the space of primitive
elements in $\NA(M)$.

A basic construction to produce new Nichols algebras is the reflection of
semisimple Yetter-Drinfeld modules $M_1 \oplus \cdots \oplus M_\theta$,
where $\theta \in \ndN $ and $M_1,\dots,M_\theta$ are finite-dimensional
and irreducible objects in $\ydH$. For $1 \leq i \leq \theta$, the $i$-th
reflection of $M =(M_1,\dots,M_\theta)$ is a certain $\theta$-tuple
$R_i(M) =(V_1,\dots,V_\theta)$ of finite-dimensional irreducible
Yetter-Drinfeld modules in $\ydH$. It is defined assuming a growth condition
of the adjoint action in the Nichols algebra $\NA(M)$ of
$M_1 \oplus \cdots \oplus M_\theta$. The Nichols algebras
$\NA(R_i(M))$ of $V_1 \oplus \cdots \oplus V_\theta$ and $\NA(M)$ have the
same dimension. The reflection operators allow to define the Weyl groupoid
of $M$, an important combinatorial invariant. In this paper we give a natural
explanation of the reflection operators in terms of the isomorphism
$(\Fu,\fu)$.

To describe our new approach to the reflection operators, fix $1 \leq i \leq \theta$, and let $K_i^M$ be the algebra of right coinvariant elements of $\NA(M)$ with respect to the canonical projection $\NA(M) \to \NA(M_i)$ coming from the direct sum decomposition of $M$. By the theory of bosonization of Radford-Majid, $K_i^M$ is a Hopf algebra in $\ydRsmash$. To define $R_i(M)$ we have to assume that $K_i^M$ is rational as an $R$-module. Let $W= \ad \NA(M_i)(\oplus_{j \neq i} M_j) \sub \NA(M)$. Then $W$  is an object  in $\ydRsmashrat$, and by Proposition \ref{prop:old} its Nichols algebra is isomorphic to $K_i^M$. This new information on $K_i^M$ is used to prove our second main result, Theorem \ref{theor:NtoN}, which says that
\begin{align}\label{intro2}
\Fu(K_i^M) \# \NA(M_i^*) \cong \NA(R_i(M)),
\end{align}
where the braided monidal functor $(\Fu,\fu)$ is defined with respect to the dual pair $(\NA(M_i^*),\NA(M_i))$. The left-hand side of \eqref{intro2} is the bosonization, hence a braided Hopf algebra in a natural way. In \cite[Thm.\,3.12(1)]{a-AHS10} a different algebra isomorphism
\begin{align}\label{intro3}
K_i^M \# \NA(M_i^*) \cong \NA(R_i(M)),
\end{align}
formally similar to \eqref{intro2}, was obtained. But there, the left-hand side is not a bosonization, and a priori it is only an algebra and not a braided Hopf algebra. This is the reason why the proof of \eqref{intro3} was quite involved. The Hopf algebra structure of $K_i^M \# \NA(M_i^*)$ induced from the isomorphism \eqref{intro3} was determined in \cite[Theorem 4.2]{p-HeckSchn09}.

If $R$ is an algebra and $M$ is a right $R$-module, we denote its module structure by $\mu^R_M = \mu_M : M \ot R \to M$. If $C$ is a coalgebra and $M$ is a right $C$-comodule, we denote by $\delta_M^C = \delta_M : M \to M \ot C$ the comodule structure map. The same notations $\mu^R_M$ and $\delta_M^C$ will be used for left modules and left comodules. In the following we assume that $H$ is a Hopf algebra over $\fie $ with comultiplication $\Delta = \Delta_H : H \to H \ot H, \; h \mapsto h\sw1 \ot h\sw2$, augmentation $\varepsilon = \varepsilon_H$, and
bijective antipode $\cS$.

\section{Preliminaries on bosonization of Yetter-Drinfeld Hopf algebras}
\label{sec:bosonization}


We recall some well-known notions and results (see e.\,g.~\cite[Sect.~1.4]{a-AHS10}), and note some useful formulas from the theory of Yetter-Drinfeld Hopf algebras.

A left Yetter-Drinfeld module over $H$ is a left $H$-module and a left $H$-comodule with $H$-action and $H$-coaction denoted by
$H \ot V \to V,\; h \ot v \mapsto h \lact v$, and
$\delta=\delta_V : V \to H \ot V,\; v \mapsto \delta(v) = v\_{-1} \ot v\_0$,
such that
\begin{align}
\delta(h \lact v) = h\_1 v\_{-1} \cS(h\_3) \ot h\_2 \lact v\_0\label{YD}
\end{align}
for all $v \in V,h \in H$.

The category of left Yetter-Drinfeld modules over $H$ with $H$-linear and $H$-colinear maps as morphisms is denoted by $\ydH$. It is a monoidal and braided category. If $V,W \in \ydH$, then the tensor product is the vector space $V \ot W$ with diagonal action and coaction given by
\begin{align}
h \lact (v \ot w) &= h\_1 \lact v \ot h\_2 \lact w,\\
\delta(v \ot w) &= v\_{-1} w\_{-1} \ot v\_{0} \ot w\_{0},\\
\intertext{and the braiding is defined by}
c_{V,W} : V &\ot W \to W \ot V,\; v \ot w \mapsto v\_{-1} \lact w \ot v\_0\label{braiding},
\intertext{with inverse}
c_{V,W}^{-1} : W &\ot V \to V \ot W,\; w \ot v \mapsto v\sw0 \ot \cS^{-1}(v\sw{-1}) \lact w,
\end{align}
for all $h \in H, v \in V,w \in W$.

The category $\rydH$ is defined in a similar way, where the objects are the right Yetter-Drinfeld modules over $H$, that is, right $H$ modules and right $H$-comodules $V$ such that
\begin{align}
\delta(v \lact h) = v\_0 \lact h\_2 \ot \cS(h\_1) v\_1 h\_3
\end{align}
for all $v \in V,h \in H$.
The monoidal structure is given by diagonal action and coaction, and the braiding is defined by
\begin{align}\label{braidingright}
c_{V,W} : V \ot W \to W \ot V,\; v \ot w \mapsto w\sw0 \ot v \lact w\sw1,
\end{align}
for all $V,W \in \rydH$.

We note that for any object $V \in \ydH$, there is a linear isomorphism
\begin{align}
&\theta_V : V \xrightarrow{\cong} V,\; v \mapsto \cS(v\sw{-1}) \lact v\sw0,\\
 \intertext{with inverse}
 &V \xrightarrow{\cong} V, \; v \mapsto \cS^{-2}(v\sw{-1}) \lact v\sw0.\label{ydiso}
\end{align}
The map $\theta_V$ is not a morphism in $\ydH$, but
\begin{align}
\theta _V(h\lact v) &= \cS^2(h) \lact \theta_V(v),\label{theta1}\\
\delta (\theta_V(v)) &=\cS^2(v\sw{-1}) \ot \theta_V(v\sw0)\label{theta2}
\end{align}
for all $v \in V, h \in H$, where $\delta(v) = v\sw{-1} \ot v\sw0$.

If $A,B$ are algebras in $\ydH$, then the algebra structure of the tensor product $A \ot B$ of the vector spaces $A,B$ is defined in terms of the braiding by
\begin{align}
(a \ot b)(a'\ot b') = a (b\sw{-1} \lact a') \ot b\sw0 b'
\end{align}
for all $a,a' \in A$ and $b,b'\in B$.

Let $R$ be a Hopf algebra in the braided monoidal category $\ydH $ with augmentation $\varepsilon_R : R \to \fie$, comultiplication $\copr _R:R\to R\ot R,\; r \mapsto
r\swo1 \ot r\swo2,$ and antipode $\cS_R$.  Thus $\varepsilon_R, \Delta_R, \cS_R$  are morphisms in $\ydH$ satisfying the Hopf algebra axioms.
The map $\cS_R$  anticommutes with multiplication and comultiplication in the following way.
\begin{align}
  \label{ruleS1}
  \cS_R(rs)=&\cS_R(r\_{-1}\lact s)\cS_R(r\_0),\\
  \copr _R(\cS_R(r))=&\cS_R(r\^1{}\_{-1}\lact r\^2)\ot \cS_R(r\^1{}\_0)
  \label{ruleS2}
 \end{align}
for all $r,s\in R$.

Let $A = R\#H$ be the bosonization of $R$.
As an algebra, $A$ is the smash product given by the $H$-action on $R$ with multiplication
\begin{align}
  \label{smashproduct}
  (r\#h)(r'\#h')=r(h\_1\lact r')\#h\_2h'
\end{align}
for all $r,r'\in R$, $h,h'\in H$. We will identify $r \# 1$ with $r$ and $1 \# h$ with $h$. Thus we view $R \sub A$ and $H \sub A$ as subalgebras, and the multiplication map
$$ R \ot H \to A, \;r \ot h \mapsto rh = r \# h,$$
 is bijective. Since $\lact$ denotes the $H$-action, we will always write $ab$ for the product of elements $a,b \in A$ (and not $a \lact b$). Note that
\begin{align}
hr &= (h\sw1 \lact r) h\sw2,\label{smash1}\\
rh &= h\sw2 (\cS^{-1}(h\sw1) \lact r)\label{smash2}
\end{align}
for all $r \in R, h \in H$.
As a coalgebra, $A$ is the cosmash product given by the $H$-coaction of the coalgebra $R$. We will denote its comultiplication by
$$\Delta : A \to A \ot A,\; a \mapsto a\sw1 \ot a\sw2.$$
 By definition,
\begin{align}
(rh)\sw1 \ot (rh)\sw2 = r\swo1 {r\swo2}\sw{-1} h\sw1 \ot {r\swo2}\sw0 h\sw2\label{cosmash}
\end{align}
for all $r \in R, h \in H$. Thus the projection maps
\begin{align}
&\pi : A \to H,\; r \# h \mapsto \varepsilon_R(r)h,\\
&\vartheta : A \to R, \; r \# h \mapsto r \varepsilon(h), \label{eq:defvartheta}
\end{align}
 are coalgebra maps, and
$$A \to R \ot H, \;a \mapsto \vartheta(a\sw1) \ot \pi(a\sw2),$$
is bijective.

Then $A= R \# H$ is a Hopf algebra with antipode $\cS = \cS_A$, where the restriction of $\cS$ to $H$ is the antipode of $H$, and
\begin{align}
\cS(r)&=\cS(r\_{-1})\cS_R(r\_0),\label{bigS}\\
\intertext{hence}
\cS^2(r) &= \cS_R^2(\theta_R(r))\label{bigS2}
\end{align}
for all $r\in R$.

The map $\pi$ is a Hopf algebra projection, and the subalgebra $R \sub A$ is a left coideal subalgebra, that is, $\Delta(R) \sub A \ot R$, which is stable under $\cS^2$.

The structure of the braided Hopf algebra $R$ can be expressed in terms of the Hopf algebra $R \# H$ and the projection $\pi$:
\begin{align}
R = A^{\co H} &= \{r \in A \mid r\sw1 \ot \pi(r\sw2)  = r \ot 1 \},\\
h \lact r &= h\sw1 r \cS(h\sw2),\label{action}\\
r\sw{-1} \ot r\sw0 &= \pi(r\sw1) \ot r\sw2,\label{coaction}\\
r\swo1 \ot r\swo2 &= r\sw1 \pi\cS(r\sw2) \ot r\sw3,\label{comult}\\
\cS_R(r)&=\pi(r\sw1)\cS(r\sw2)\label{antipode}\\
\intertext{for all $h\in H$, $r\in R$. We list some formulas related to the  projection $\vartheta$.}
\vartheta(a) &= a\sw1 \pi\cS(a\sw2),\label{vartheta}\\
a &=\vartheta(a\sw1) \pi(a\sw2), \label{decomposition}\\
r\swo1 \ot r\swo2 &= \vartheta(r\sw1) \ot r\sw2,\label{comult1}\\
\vartheta(a)\swo1 \ot \vartheta(a)\swo2 &= \vartheta(a\sw1) \ot \vartheta(a\sw2),\label{comultvartheta}\\
\vartheta(a)\sw{-1} \ot \vartheta(a)\sw0 &= \pi(a\sw1 \cS(a\sw3)) \ot \vartheta(a\sw2)\label{coactionvartheta}\\
\intertext{for all $r \in R, a \in A$.}
\intertext{By \eqref{action}, the inclusion $R \sub A$ is an $H$-linear algebra map, where the $H$-action on $A$ is the adjoint action. By \eqref{comultvartheta} and \eqref{coactionvartheta},
the map $\vartheta : A \to R$ is an $H$-colinear coalgebra map, where the $H$-coaction of $A$ is defined by}
A &\to H \ot A,\; a \mapsto \pi(a\sw1 S(a\sw3)) \ot a\sw2,
\intertext{that is, by the coadjoint $H$-coaction of $A$.}
\intertext{Finally we note the following useful formulas related to the behaviour of $\vartheta$ with respect to multiplication.}
\vartheta(ah) &= \varepsilon(h) \vartheta(a),\label{vartheta1}\\
\vartheta(ha) &= h \lact \vartheta(a),\label{vartheta2}
\end{align}
for all $h \in H,a \in A$.
\begin{lemma}\label{lem:vartheta3}
Let $R$ be a Hopf algebra in $\ydH$ and $A =R \# H$ its bosonization. Then
\begin{align*}
\vartheta\cS\left(a \pi\cS^{-1}(b\sw2) b\sw1\right) = \vartheta\cS(b\sw2) \left(\pi\left(\cS(b\sw1) b\sw3\right) \lact \vartheta\cS(a)\right),
\end{align*}
for all $h \in H$ and $a,b \in A$.
\end{lemma}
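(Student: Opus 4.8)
The plan is to transport everything to the single projection $\vartheta\cS\colon A\to R$ and to reduce both sides to one conjugation-type expression in which the arbitrary element $\vartheta\cS(a)\in R$ is isolated between universal factors built from $b$, $\pi$ and $\cS$. Throughout I use that $\cS$ is an anti-algebra map and that $\pi$ is a Hopf algebra map, so that $\pi\cS=\cS\pi$ on $H$ (and hence also $\pi\cS^{-1}=\cS^{-1}\pi$ and $\pi\cS^2=\cS^2\pi$). As a first step I simplify the left-hand side: since $\pi\cS^{-1}(b\sw2)\in H$ and $\cS(\pi\cS^{-1}(b\sw2))=\pi(b\sw2)$, anti-multiplicativity of $\cS$ yields $\cS(a\,\pi\cS^{-1}(b\sw2)\,b\sw1)=\cS(b\sw1)\,\pi(b\sw2)\,\cS(a)$, so that
\[
\vartheta\cS\bigl(a\,\pi\cS^{-1}(b\sw2)\,b\sw1\bigr)=\vartheta\bigl(\cS(b\sw1)\,\pi(b\sw2)\,\cS(a)\bigr).
\]

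The engine of the argument is a twisted product formula for $\vartheta$: for all $u,v\in A$,
\[
\vartheta(uv)=u\sw1\,\vartheta(v)\,\pi\cS(u\sw2).
\]
This follows at once from $\vartheta(x)=x\sw1\pi\cS(x\sw2)$, equation \eqref{vartheta}: expanding $\vartheta(uv)=u\sw1v\sw1\,\pi\cS(u\sw2v\sw2)$ and using that $\cS$ is anti-multiplicative and $\pi$ multiplicative, the factor $v\sw1\pi\cS(v\sw2)=\vartheta(v)$ splits off. (For $u=h\in H$ this recovers $\vartheta(hv)=h\lact\vartheta(v)$ of \eqref{vartheta2} via the adjoint action \eqref{action}, a useful consistency check.) Applying it with $u=\cS(b\sw1)\pi(b\sw2)$ and $v=\cS(a)$, expanding $\Delta(u)$ through $\Delta\cS(b\sw1)=\cS(b\sw2)\ot\cS(b\sw1)$ and $\Delta\pi(b\sw2)=\pi(b\sw3)\ot\pi(b\sw4)$, and simplifying $\pi\cS(u\sw2)$ once more via $\pi\cS=\cS\pi$, the left-hand side becomes
\[
\cS(b\sw2)\,\pi(b\sw3)\,\vartheta\cS(a)\,\cS\pi(b\sw4)\,\cS^2\pi(b\sw1),
\]
an expression with four legs of $b$.

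For the right-hand side I expand the two factors separately, using the adjoint action \eqref{action} in the form $g\lact r=g\sw1\,r\,\cS(g\sw2)$, the multiplicativity of $\pi$ to write $\pi(\cS(b\sw1)b\sw3)=\cS\pi(b\sw1)\,\pi(b\sw3)$, and the special formula $\vartheta\cS(x)=\cS(x\sw2)\,\cS^2\pi(x\sw1)$ obtained by applying \eqref{vartheta} to $\cS(x)$. This produces a six-leg expression in $b$. The main obstacle, and the step that makes the two sides coincide, is recognizing the collapse: the six-leg expression contains the adjacent factor $\pi(b\sw2)\,\cS\pi(b\sw3)$ inside a single application of $\cS$, which by coassociativity and the antipode axiom $\pi(x\sw1)\cS\pi(x\sw2)=\varepsilon(x)1$ equals $\varepsilon$ and merges two legs. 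After this contraction the right-hand side reduces to exactly the four-leg normal form displayed above, proving the identity.

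Apart from the contraction, the only genuine difficulty is the Sweedler bookkeeping: one must choose the two intermediate identities (the twisted product rule and $\vartheta\cS(x)=\cS(x\sw2)\cS^2\pi(x\sw1)$) so that both sides land in a normal form with $\vartheta\cS(a)$ isolated, and then keep careful track of which legs of $\Delta b$ pair off under the antipode. The numerical factors $h$ in the statement play no role, so the identity is really an assertion about $a,b\in A$ alone.
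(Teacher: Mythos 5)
Your proof is correct and is essentially the paper's own computation: your common four-leg normal form $\cS(b\sw2)\,\pi(b\sw3)\,\vartheta\cS(a)\,\pi\cS(b\sw4)\,\pi\cS^2(b\sw1)$ is precisely the middle line of the paper's chain of equalities, which runs from the right-hand side to the left using \eqref{action}, \eqref{vartheta} and \eqref{decomposition}. The only difference is organizational: you reduce both sides to that normal form and contract the redundant pair of legs with the antipode axiom $\pi(x\sw1)\cS\pi(x\sw2)=\varepsilon(x)1$, whereas the paper performs the same cancellation by applying \eqref{decomposition} to $\cS(b\sw2)$ (your ``twisted product formula'' $\vartheta(uv)=u\sw1\vartheta(v)\pi\cS(u\sw2)$ is likewise just \eqref{vartheta} unpacked).
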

\begin{proof}
\begin{align*}
 \vartheta\cS(b\sw2) \left(\pi\left(\cS(b\sw1) b\sw3\right) \lact \vartheta\cS(a)\right)&=\vartheta\cS(b\sw3) \pi\left(\cS(b\sw2) b\sw4\right) \vartheta\cS(a)\pi\cS\left(\cS(b\sw1) b\sw5\right)\\
&=\cS(b\sw2) \pi(b\sw3) \cS(a\sw2) \pi\cS^2(a\sw1)\pi\cS\left(\cS(b\sw1) b\sw4\right)\\
&=\vartheta\left(\cS(b\sw1) \pi(b\sw2) \cS(a)\right)\\
&=\vartheta\cS\left(a \pi\cS^{-1}(b\sw2) b\sw1\right),
\end{align*}
where the second equality follows from \eqref{decomposition} applied to $\cS(b\sw2)$ and \eqref{vartheta} applied to $\cS(a)$, and the third equality follows from \eqref{vartheta}.
\end{proof}

It follows from \eqref{bigS2} and \eqref{ydiso} that the antipode $\cS_R$ of $R$ is bijective if and only if the antipode $\cS$ of $R$ is bijective. In this case the following formulas hold for $\cS_R^{-1}$ and $\cS^{-1}$.
\begin{align}
\cS_R^{-1}(r)&=\cS^{-1}(r\_0)r\_{-1} = \vartheta \cS^{-1}(r),\label{ruleS5}\\
\cS^{-1}(rh) &= \cS^{-1}(h) \cS_R^{-1}(r\sw0) \cS^{-1}(r\sw{-1})\label{bigSinvers}
\end{align}
for all $r,s \in R$.

\section{Dual pairs of braided Hopf algebras and rational modules}

The field $\fie$ will be considered as a topological space with the discrete topology.
We denote by $\cL_{\fie}$ the category of {\em linearly topologized vector spaces} over $\fie$. Objects of $\cL_{\fie}$ are topological vector spaces which have a basis of neighborhoods of $0$ consisting of vector subspaces. Morphisms in $\cL_{\fie}$ are continuous $\fie$-linear maps.

Thus an object in $\cL_{\fie}$ is a vector space and a topological space $V$, where the topology on $V$ is given by a  set $\{V_i \sub V \mid i \in I\}$ of vector subspaces of $V$ such that for all $i,j \in I$ there is an index $k \in I$ with $V_k \sub V_i \cap V_j$. The set $\{V_i \sub V \mid i \in I\}$ is a basis of neighborhoods of $0$, and a subset $U \sub V$ is open if and only if for all $x \in U$ there is an index $i \in I$ such that $x + V_i \sub U$.

In particular, a vector subspace $U \sub V$ is open if and only if $V_i \sub U$ for some $i \in I$.

Let $V, W \in \cL_{\fie}$, and let $\{V_i \sub V \mid i \in I\}$ and $\{W_j \sub W \mid j \in J\}$ be bases of neighborhoods of $0$. Then a linear map $f : V \to W$ is continuous if and only if for all $j \in J$ there is an index $i \in I$ with $f(V_i) \sub W_j$. We define the tensor product $V \ot W$ as an object in $\cL_{\fie}$ with
$$ \{V_i \ot W + V \ot W_j \mid (i,j) \in I \times J\}$$
as a basis of neighborhoods of $0$.

Let $R,R^{\vee}$ be vector spaces, and let
$$\langle\;,\;\rangle : R^{\vee}  \ot R \to \fie,\;\xi \ot x \mapsto \langle \xi,x \rangle,$$
be a $\fie$-bilinear form. If $X \sub R$ and $X' \sub R^{\vee}$ are subsets, we define
\begin{align*}
^{\perp}X &= \{ \xi \in R^{\vee} \mid \langle \xi,x\rangle = 0 \text { for all } x \in X\},\\
X'^{\perp} &= \{ x \in R \mid \langle \xi,x\rangle = 0 \text { for all } \xi \in X'\}.
 \end{align*}
We endow $R^{\vee}$ with the  {\em finite topology} ({or the \em weak topology}), which is the coarsest topology on $R^{\vee}$ such that the  evaluation maps $\langle\;,x\rangle : R^{\vee} \to \fie,\; \xi \mapsto \langle\xi,x\rangle,$ for all $x \in R$ are continuous.
In the same way we view $R$ as a topological space with the finite topology with respect to the evaluation maps $\langle\xi,\;\rangle : R \to \fie,\; x \mapsto \langle \xi,x\rangle$, for all $\xi \in R^{\vee}$.

Let $\cE$ be a cofinal subset of the set of all finite-dimensional subspaces of $R$ (that is, $\cE$ is a set of finite-dimensional subspaces of $R$, and any finite-dimensional subspace $E \sub R$ is contained in some $E_1 \in \cE$). Let $\cE'$ be a cofinal subset of the set of all finite-dimensional subspaces of $R^{\vee}$. Then $R^{\vee}$ and $R$ are objects in $\cL_{\fie}$, where
\begin{equation*}
\{^{\perp}E \mid E \in \cE\} \text{ and }
\{E'^{\perp} \mid E' \in \cE'\}
\end{equation*}
are bases of neighborhoods of $0$ of $R^{\vee}$ and $R$, respectively.

The pairing $\langle\;,\; \rangle$ is called {\em non-degenerate}
if $^{\perp}R =0 \text{ and } {R^{\vee}}^{\perp} =0$. Let $E \in \cE$, and assume that $^{\perp}R = 0$. Then
$$E \to (R^{\vee}/^{\perp}E)^*, \; x \mapsto (\overline{\xi} \mapsto \langle \xi,x\rangle),$$
 is injective. Since
 $$R^{\vee}/^{\perp}E \to E^*,\; \overline{\xi} \mapsto (x \mapsto \langle \xi,x \rangle),$$
  is injective by definition, it follows that
\begin{align}
R^{\vee}/^{\perp}E &\xrightarrow{\cong} E^*,\;\overline{\xi} \mapsto \langle \xi,\; \rangle,\label{nonde1}
\end{align}
is bijective.
By the same argument, for all $E' \in \cE_{R^{\vee}}$
\begin{align}
R/E'^{\perp} &\xrightarrow{\cong} {E'}^*,\;\overline{x} \mapsto \langle \;,x\rangle,\label{nonde2}
\end{align}
is bijective, if ${R^{\vee}}^{\perp} =0$.

If $V,W$ are vector spaces, denote by
$$\Hom_{\rat}(R^{\vee} \ot V,W) \text{ (respectively }\Hom_{\rat}(V \ot R^{\vee},W))$$ the set of all linear maps $g : R^{\vee} \ot V \to W$ (respectively $g : V \ot R^{\vee}  \to W$) such that for all $v \in V$   there is a finite-dimensional subspace $E \sub R$ with $g(^{\perp}E \ot v) = 0$ (respectively $g(v \ot {^{\perp}E}) = 0$).

\begin{lemma}\label{lem:rational}
Let $\langle \;,\; \rangle : R^{\vee} \ot R \to \fie$ be  a non-degenerate $\fie$-bilinear form of vector spaces, and
let $V,W$ be vector spaces. Then the following hold.
\begin{enumerate}
\item The map
$$ \dual : \Hom(V,R \ot W) \to \Hom_{\rat}(R^{\vee} \ot V,W),\;\; f \mapsto (\langle\;,\; \rangle \ot \id)(\id \ot f),$$
is bijective.
\item The map
$$ \dual' : \Hom(V,R \ot W) \to \Hom_{\rat}(V \ot R^{\vee},W),\;\; f \mapsto (\id \ot \langle\;,\; \rangle)\tau(f \ot \id),$$
is bijective, where $\tau : R \ot W \ot R^{\vee} \to W \ot R^{\vee} \ot R$ is the twist map with $\tau(x \ot w \ot \xi) = w \ot \xi \ot x$ for all $x \in R,w \in W, \xi \in R^{\vee}$.
\end{enumerate}
\end{lemma}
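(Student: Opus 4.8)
The plan is to reduce part~(2) to part~(1) and to prove part~(1) by localizing everything to finite-dimensional subspaces of $R$. Writing $f(v) = \sum_i x_i \ot w_i$ (a finite sum) for $f \in \Hom(V, R \ot W)$, both maps are the same contraction: $\dual(f)(\xi \ot v) = \sum_i \langle \xi, x_i \rangle w_i$ and $\dual'(f)(v \ot \xi) = \sum_i \langle \xi, x_i \rangle w_i$. Hence $\dual' = (-\circ \sigma) \circ \dual$, where $\sigma : V \ot R^{\vee} \to R^{\vee} \ot V$ is the flip. Since $\sigma$ is a linear isomorphism carrying the rationality condition $g({}^{\perp}E \ot v) = 0$ to $g(v \ot {}^{\perp}E) = 0$, it identifies $\Hom_{\rat}(R^{\vee} \ot V, W)$ with $\Hom_{\rat}(V \ot R^{\vee}, W)$, so part~(2) follows from part~(1); I treat only the latter.

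First I would check that $\dual$ indeed lands in $\Hom_{\rat}(R^{\vee} \ot V, W)$: for fixed $v$, set $E = \ls\{x_1, \dots, x_n\}$ where $f(v) = \sum_i x_i \ot w_i$; then $\dual(f)({}^{\perp}E \ot v) = 0$ because $\langle \xi, x_i \rangle = 0$ for $\xi \in {}^{\perp}E$. For injectivity, suppose $\dual(f) = 0$ and fix $v$, writing $f(v) = \sum_i x_i \ot w_i$ with the $w_i$ linearly independent; then $\sum_i \langle \xi, x_i \rangle w_i = 0$ for all $\xi \in R^{\vee}$ forces $\langle \xi, x_i \rangle = 0$ for all $i$ and all $\xi$, so each $x_i \in {R^{\vee}}^{\perp} = 0$ by non-degeneracy and $f(v) = 0$.

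The essential content is surjectivity, where I would construct the inverse. Given $g \in \Hom_{\rat}(R^{\vee} \ot V, W)$ and $v \in V$, rationality yields a finite-dimensional $E \sub R$ with $g({}^{\perp}E \ot v) = 0$, so $\xi \mapsto g(\xi \ot v)$ descends to a map $R^{\vee}/{}^{\perp}E \to W$. The isomorphism $R^{\vee}/{}^{\perp}E \cong E^*$ of \eqref{nonde1} (whose derivation is valid for every finite-dimensional $E \sub R$ since ${}^{\perp}R = 0$), together with the canonical identification $\Hom(E^*, W) \cong E \ot W$ for finite-dimensional $E$, turns this descended map into a unique element $f(v) \in E \ot W \sub R \ot W$. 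A short computation with a basis of $E$ and its dual basis then gives $\dual(f)(\xi \ot v) = g(\xi \ot v)$, so $\dual(f) = g$.

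The main obstacle is to confirm that this recipe really produces a morphism $f \in \Hom(V, R \ot W)$, that is, that $f(v)$ does not depend on the chosen $E$ and that $v \mapsto f(v)$ is linear. Independence follows from compatibility with enlarging $E$: if $E \sub E'$ then ${}^{\perp}E' \sub {}^{\perp}E$, and under \eqref{nonde1} the restriction ${E'}^* \to E^*$ corresponds to the inclusion $E \ot W \hookrightarrow E' \ot W$, so the two candidate elements agree in $R \ot W$. Linearity is obtained by choosing, for given $v, v'$, a common subspace $E$ (replacing $E, E'$ by $E + E'$ and using ${}^{\perp}(E + E') = {}^{\perp}E \cap {}^{\perp}E'$), on which the assignment is visibly linear in $v$. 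Together with the injectivity established above, this yields the bijectivity of $\dual$, and hence of $\dual'$.
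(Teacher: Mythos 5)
Your proof is correct and follows essentially the same route as the paper: reduce (2) to (1) via the flip, and invert $\dual$ by localizing to finite-dimensional subspaces of $R$ through the isomorphism \eqref{nonde1} combined with $\Hom(E^*,W)\cong E\ot W$, checking compatibility under enlargement of $E$. The only cosmetic differences are that you construct $f(v)$ pointwise and verify linearity via a common $E$ (where the paper works with finite-dimensional subspaces $U\sub V$ and glues the family $(f_U)$), and that you make the injectivity argument via ${R^{\vee}}^{\perp}=0$ explicit where the paper leaves it implicit in the construction of the inverse.
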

\begin{proof}
(1) For completeness we recall the following well-known argument.

Let $f \in \Hom(V,R \ot W)$, and $g=\dual(f)$. For all $v \in V$ there is a finite-dimensional subspace $E \sub R$ with $f(v) \in E \ot W$, hence $g(^{\perp}E \ot v) =0$. Thus $g \in \Hom_{\rat}(R^{\vee} \ot V,W)$.

Conversely, let $g \in \Hom_{\rat}(R^{\vee} \ot V,W)$. For any finite-dimensional subspace $U \sub V$ there is a finite-dimensional subspace $E \sub R$ with $g(^{\perp}E \ot U)=0$. Let $g_{U,E} \in \Hom(R^{\vee}/^{\perp}E \ot U, W)$ be the map induced by $g$, and $f_{U,E} \in \Hom(U,E \ot W)$ the inverse image of $g_{U,E}$ under the isomorphisms
\begin{align*}
\Hom(U,E \ot W) \xrightarrow{\cong} \Hom(E^* \ot U,W) \xrightarrow{\cong} \Hom(R^{\vee}/^{\perp}E \ot U,W),
\end{align*}
where the first map is the canonical isomorphism, and the second map is induced by the isomorphism in \eqref{nonde1}.

If $E'$ is a finite-dimensional subspace of $R$ containing $E$, then
$$f_{U,E}(v) = f_{U,E'}(v) \text{ for all } v \in U.$$
Hence $f_U \in \Hom(U,R \ot W)$, defined by $f_U(v) = f_{U,E}(v)$ for all $v \in U$, does not depend on the choice of $E$.

Since $f_{U'} \res U = f_U$ for all finite-dimensional subspaces $U \sub U'$ of $V$, the inverse image $\dual^{-1}(g)$ can be defined by the family $(f_U)$.

(2) follows from (1) since the twist map $V \ot R^{\vee} \to R^{\vee} \ot V$ defines an isomorphism $\Hom_{\rat}(R^{\vee} \ot V,W) \cong \Hom_{\rat}(V \ot R^{\vee},W)$.
\end{proof}

Let $R,R^{\vee}$ be Hopf algebras  in the braided monoidal category $\ydH$, and let
$$\langle\;,\;\rangle : R^{\vee}  \ot R \to \fie,\;\xi \ot x \mapsto \langle \xi,x \rangle,$$
be a $\fie$-bilinear form of vector spaces.

\begin{defin}\label{defin:pair}
Assume
that there are cofinal subsets $\cE_R$ (respectively $\cE_{R^{\vee}}$) of the sets of all finite-dimensional vector subspaces of $R$ (respectively of $R^{\vee}$) consisting of subobjects in $\ydH$.

Then the pair $(R,R^{\vee})$ together with the bilinear form $\langle \;,\; \rangle : R^{\vee} \ot R \to \fie$ is  called a {\em dual pair of Hopf algebras} in $\ydH$ if
\begin{align}
\langle\;,\;\rangle& \text{ is non-degenerate},\label{pair1}\\
\langle h \lact \xi,x\rangle &= \langle \xi,\cS(h) \lact x\rangle,\label{pair2}\\
\xi\sw{-1} \langle\xi\sw0,x\rangle &= \cS^{-1}(x\sw{-1}) \langle \xi,x\sw0\rangle,\label{pair3}\\
\langle \xi,xy\rangle &= \langle \xi\swo1,y\rangle \langle \xi\swo2,x\rangle,\;\langle1,x \rangle = \varepsilon(x),\label{pair4}\\
\langle \xi \eta, x\rangle &= \langle \xi, x\swo2\rangle \langle \eta,x\swo1\rangle,\;\langle\xi,1\rangle = \varepsilon(\xi),\label{pair5}\\
\Delta_{R^{\vee}} : R^{\vee} &\to R^{\vee} \ot R^{\vee} \text{ is continuous},\label{pair7}\\
\Delta_{R} : R &\to R \ot R \text{ is continuous}\label{pair8}
\end{align}
for all $x,y \in R, \xi,\eta \in R^{\vee}$ and $h \in H$.

A left or right $R^{\vee}$-module (respectively $R$-module) $M$ is called {\em rational} if any element of $M$ is annihilated by $^{\perp}E$ (respectively $E'^{\perp}$) for some finite-dimensional vector subspace $E \sub R$ (respectively $E'\sub R^{\vee}$).
\end{defin}

\begin{lemma}\label{lem:pair}
Let $(R,R^{\vee})$ together with $\langle \;,\; \rangle : R^{\vee} \ot R \to \fie$ be a dual pair of Hopf algebras in $\ydH$. Then for all $x \in R, \xi \in R^{\vee}$ and for all $E \in \cE_R, E' \in \cE_{R^{\vee}}$,
\begin{align}
&\langle \cS_{R^{\vee}}(\xi),x \rangle = \langle \xi,\cS_R(x) \rangle,\label{pair6}\\
&^{\perp}E \sub R^{\vee} \text{ and } E'^{\perp} \sub R \text{ are subobjects in }\ydH. \label{pair9}
\end{align}
\end{lemma}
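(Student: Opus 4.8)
The plan is to prove the two assertions of Lemma \ref{lem:pair} separately, treating \eqref{pair6} as an identity of bilinear forms verified on a spanning family, and \eqref{pair9} as a closure statement about the $H$-action and $H$-coaction on the orthogonal complements. For \eqref{pair6}, the natural strategy is to show that both $\xi \mapsto \langle \cS_{R^{\vee}}(\xi), x\rangle$ and $\xi \mapsto \langle \xi, \cS_R(x)\rangle$ arise as the convolution inverse of the same map, so that uniqueness of convolution inverses forces them to agree. Concretely, I would use the fact that $\cS_R$ is the antipode of $R$, hence satisfies $\cS_R(x\swo1) x\swo2 = \varepsilon_R(x) 1 = x\swo1 \cS_R(x\swo2)$, and then pair against $\xi$ using the compatibility \eqref{pair4} between the form and the multiplication of $R$ together with \eqref{pair5} for $R^{\vee}$. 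The antipode-anticommutation formulas \eqref{ruleS1}, \eqref{ruleS2} (applied in $R$ and in $R^{\vee}$) should reduce everything to the defining property of an antipode in a braided category; the main care is to track the braiding correctly, since $\cS_R$ is a morphism in $\ydH$ and the convolution product in $\Hom(R^{\vee}\otimes R, \fie)$ uses the braided tensor structure.

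For \eqref{pair9}, the plan is to verify directly that $^{\perp}E$ is closed under the $H$-action and $H$-coaction, using the compatibilities \eqref{pair2} and \eqref{pair3} of the form with the Yetter-Drinfeld structures, and that $E$ is a subobject in $\ydH$. First I would check $H$-stability: for $\xi \in {}^{\perp}E$, $h \in H$ and $x \in E$, formula \eqref{pair2} gives $\langle h \lact \xi, x\rangle = \langle \xi, \cS(h)\lact x\rangle$, and since $E$ is an $H$-submodule, $\cS(h)\lact x \in E$, so this vanishes; hence $h \lact \xi \in {}^{\perp}E$. The coaction is the more delicate part: I must show that $\delta(\xi) = \xi\sw{-1}\otimes\xi\sw0$ lies in $H \otimes {}^{\perp}E$ whenever $\xi \in {}^{\perp}E$. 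Here \eqref{pair3} rewrites $\xi\sw{-1}\langle \xi\sw0, x\rangle$ as $\cS^{-1}(x\sw{-1})\langle \xi, x\sw0\rangle$; because $E$ is an $H$-subcomodule, $x\sw{-1}\otimes x\sw0 \in H \otimes E$, so for $\xi \in {}^{\perp}E$ the right-hand side vanishes, giving $\xi\sw{-1}\langle \xi\sw0, x\rangle = 0$ for all $x \in E$.

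The main obstacle I anticipate is exactly this last step: concluding from $\xi\sw{-1}\langle \xi\sw0, x\rangle = 0$ in $H$ that each $\xi\sw0$ lies in $^{\perp}E$, i.e.\ that $\delta(\xi) \in H \otimes {}^{\perp}E$. This does not follow by simply evaluating, because the coaction spreads $\xi$ into a sum $\sum \xi\sw{-1}\otimes\xi\sw0$ whose tensor factors need not be individually orthogonal to $x$. The clean way around this is to pick a basis argument: write $\delta(\xi) = \sum_i h_i \otimes \xi_i$ with the $h_i \in H$ linearly independent; then $\sum_i h_i \langle \xi_i, x\rangle = 0$ forces $\langle \xi_i, x\rangle = 0$ for every $i$ and every $x \in E$, so each $\xi_i \in {}^{\perp}E$ and therefore $\delta(\xi) \in H \otimes {}^{\perp}E$, as required. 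The symmetric statements for $E'^{\perp} \sub R$ follow by the same computation with the roles of $R$ and $R^{\vee}$ interchanged, now invoking \eqref{pair2} and \eqref{pair3} read in the dual direction and using that $E'$ is a subobject in $\ydH$; non-degeneracy \eqref{pair1} is what guarantees the orthogonal complements are genuine subobjects rather than being collapsed. Finally, for \eqref{pair6} the subtlety to watch is the sign/side conventions forced by the braiding in \eqref{ruleS1}--\eqref{ruleS2}, so I would double-check the computation by pairing on both a product $xy$ and a coproduct to confirm the antipode intertwines the two forms consistently.
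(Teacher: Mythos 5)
Your proposal is correct and takes essentially the same route as the paper: for \eqref{pair6} the paper works in the convolution algebra $\Hom(R^{\vee},R^{*\op})$ and shows $\varphi_1 * \psi$ and $\psi * \varphi_2$ both equal the unit, which is exactly your uniqueness-of-convolution-inverses argument, and for \eqref{pair9} your closure computation from \eqref{pair2} and \eqref{pair3}, including the linear-independence argument for the coaction, correctly supplies the details the paper leaves implicit. Two harmless inaccuracies worth noting: the braiding and \eqref{ruleS1}--\eqref{ruleS2} are not actually needed for \eqref{pair6}, since the order reversal is already built into \eqref{pair4}--\eqref{pair5} (this is why the paper uses the opposite algebra $R^{*\op}$), and non-degeneracy \eqref{pair1} plays no role in \eqref{pair9}.
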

\begin{proof}
The vector space $\Hom(R^{\vee}, R^{*\op})$ is an algebra with convolution product. We define linear maps $\varphi_1,\varphi_2, \psi \in \Hom(R^{\vee}, R^{*\op})$ by
$$\varphi_1(\xi)(x) = \langle \xi, \cS_R(x) \rangle,\; \varphi_2(\xi)(x) = \langle \cS_{R^{\vee}}(\xi), x \rangle,\; \psi(\xi)(x) = \langle \xi,x \rangle,$$
for all $\xi \in R^{\vee}, x \in R$. Then by \eqref{pair4} and \eqref{pair5} the unit element in $\Hom(R^{\vee}, R^{*\op})$ is equal to $\varphi_1 * \psi$ and also to $\psi * \varphi_2$. Hence $\varphi_1 = \varphi_2$.

\eqref{pair9} follows from \eqref{pair2} and \eqref{pair3}.
\end{proof}

Note that the bilinear form $\langle\;,\;\rangle : R^{\vee}  \ot R \to \fie$ is a morphism in $\ydH$ if and only if \eqref{pair2} and \eqref{pair3} are satisfied.

The continuity conditions \eqref{pair7} and \eqref{pair8} are equivalent to the following. For all  $E \in \cE_R$ and $E' \in \cE_{R^{\vee}}$ there  are $E_1 \in \cE_R$ and $E'_1 \in \cE_{R^{\vee}}$ such that
$$\Delta_{R^{\vee}}({}^{\perp}E_1) \sub {^{\perp}E} \ot R^{\vee} + R^{\vee} \ot {}^{\perp}E, \;
\Delta_{R}({E_1'^{\perp}}) \sub E'{^{\perp}} \ot R + R \ot E'{^{\perp}}.$$

By \eqref{nonde1} and \eqref{nonde2}, rational modules over $R$ or $R^{\vee}$ are locally finite. Recall that a module over an algebra is {\em locally finite} if each element of the module is contained in a finite-dimensional submodule.

\begin{examp}\label{exa:gradedpair}
Let $R^{\vee}= \oplus_{n \geq 0} R^{\vee}(n)$ and $R= \oplus_{n \geq 0} R(n)$ be $\no$-graded Hopf algebras in $\ydH$ with finite-dimensional components $R^{\vee}(n)$ and $R(n)$ for all $n \geq 0$, and let $\langle \;,\; \rangle : R^{\vee} \ot R \to \fie$ be a  bilinear form of vector spaces such that
\begin{align}
&\langle R^{\vee}(m), R(n) \rangle = 0 \text{  for all } n \neq m \text{  in }\no.\label{gradedpair}
\end{align}
Assume \eqref{pair1} -- \eqref{pair5}.

For all integers $n \geq 0$ we define
$$\cF_nR = \oplus_{i=0}^{n} R(i),\; \cF_nR^{\vee} = \oplus_{i=0}^{n} R^{\vee}(i).$$
Then the subspaces $\cF_n R \sub R, n \geq 0,$ and $\cF_n R^{\vee} \sub R^{\vee}, n\geq 0,$ form cofinal subsets of the set of all finite-dimensional subspaces of $R$ and of $R^{\vee}$ consisting of subobjects in $\ydH$.
For all $n \geq 0$, let
$$\cF^nR = \oplus_{i \geq n} R(i),\; \cF^nR^{\vee} = \oplus_{i \geq n} R^{\vee}(i).$$
Then by \eqref{gradedpair} and \eqref{pair1}, for all $n \geq 0$,
\begin{align}\label{gradedperp}
{}^{\perp}(\cF_{n-1}R) = \cF^n R^{\vee}, \;({\cF_{n-1}R^{\vee}})^{\perp} = \cF^nR.
\end{align}
 Since the coalgebras $R^{\vee}$ and $R$ are $\no$-graded, it follows that
$$\Delta_{R^{\vee}}(\cF^{2n} R^{\vee}) \sub  \cF^{n} R^{\vee} \otimes R^{\vee} + R^{\vee} \otimes \cF^{n} R^{\vee} ,\;\Delta_{R}(\cF^{2n} R) \sub \cF^n R \otimes R + R \otimes \cF^n R$$
for all $n \geq 0$. Thus $\Delta_R$ and $\Delta_{R^{\vee}}$ are continuous.

Hence the pair $(R,R^{\vee})$ together with the bilinear form $\langle \;,\; \rangle $ is  a dual pair of Hopf algebras in $\ydH$. Moreover, the remaining structure maps of $R^{\vee}$ and of $R$, that is multiplication, unit map, augmentation and antipode, are all continuous, since they are $\no$-graded. Here, the ground field is graded by $\fie(0) = \fie$, and $\fie(n) = 0$ for all $n \geq 1$.

Since $R(0)$ is a finite-dimensional Hopf algebra in $\ydH$, the antipode of $R(0)$ is bijective by \cite[Proposition 7.1]{inp-Takeuchi00}. Hence the Hopf subalgebra $\cF_0 R\# H$ of $R \# H$ has bijective antipode by \eqref{bigS2} and \eqref{ydiso}. The filtration
$$\cF_0R \# H \sub \cF_1R \# H \sub \cF_2R \# H \sub \cdots \sub R \# H$$
is a coalgebra filtration, and by the argument in \cite[Remark 2.1]{p-HeckSchn09}, the antipodes of $R \# H$ and of $R$ are bijective. The same proof shows that the antipodes of $R^{\vee} \# H$ and of $R^{\vee}$ are bijective.
\end{examp}

Let $(R,R^{\vee})$ together with $\langle \;,\; \rangle : R^{\vee} \ot R \to \fie$ be a dual pair of Hopf algebras in $\ydH$.
We denote by ${^R(\ydH)}$ the category of left $R$-comodules in the monoidal category $\ydH$, and by ${_{R^{\vee}}(\ydH)}_{\rat}$ the category of left $R^{\vee}$-modules in $\ydH$ which are rational as $R^{\vee}$-modules.

\begin{propo}\label{propo:rational}
\begin{enumerate}
\item For all $M \in {^R(\ydH)}$  let $D(M)=M$ as an object in $\ydH$ with $R^{\vee}$-module structure given by
\begin{align*}
\xi m = \langle \xi,m\sws{-1} \rangle m\sws0
\end{align*}
for all $\xi \in R^{\vee},m \in M$,where the left $R$-comodule structure of $M$ is denoted by $\delta_M(m)= m\sws{-1} \ot m\sws0$. Then $D(M) \in {_{R^{\vee}}(\ydH)}_{\rat}$.
\item The functor
\begin{align*}
D : {^R(\ydH)} \to {_{R^{\vee}}(\ydH)_{\rat}}
\end{align*}
mapping $M \in {^R(\ydH)}$ onto $D(M)$, and with $D(f) = f$ for all morphisms in ${^R(\ydH)}$, is an isomorphism of categories.
\end{enumerate}
\end{propo}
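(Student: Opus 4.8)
The plan is to treat the two parts separately, using the purely linear duality of Lemma \ref{lem:rational} as the backbone and layering the algebraic and Yetter-Drinfeld compatibilities on top of it.

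For part (1), I would first record that the prescribed $R^{\vee}$-action is the composite
$$R^{\vee} \ot M \xrightarrow{\id \ot \delta_M} R^{\vee} \ot R \ot M \xrightarrow{\langle\;,\;\rangle \ot \id} M,\quad \xi \ot m \mapsto \langle \xi, m\sws{-1}\rangle m\sws0.$$
Associativity $\xi(\eta m) = (\xi\eta)m$ then follows from coassociativity of $\delta_M$ together with \eqref{pair5}, which converts the product $\xi\eta$ into the coproduct of $m\sws{-1}$; unitality follows from the counit axiom of $\delta_M$ and the identity $\langle 1, x\rangle = \varepsilon(x)$ from \eqref{pair4}. Rationality is immediate: for each $m$ one has $\delta_M(m) \in E \ot M$ for some finite-dimensional $E \sub R$, so $^{\perp}E$ annihilates $m$. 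Finally, since both $\delta_M$ and the evaluation $\langle\;,\;\rangle$ are morphisms in $\ydH$ — the latter because \eqref{pair2} and \eqref{pair3} are exactly the assertion that the pairing is $H$-linear and $H$-colinear, by the observation following Lemma \ref{lem:pair} — their composite, the action map, is again a morphism in $\ydH$. Hence $D(M)$ is a rational left $R^{\vee}$-module in $\ydH$, i.e. $D(M) \in {_{R^{\vee}}(\ydH)}_{\rat}$.

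For part (2), I would check functoriality first: if $f : M \to N$ is $R$-colinear, then $f(\xi m) = \langle \xi, m\sws{-1}\rangle f(m\sws0) = \langle \xi, f(m)\sws{-1}\rangle f(m)\sws0 = \xi f(m)$, so $D(f)=f$ is $R^{\vee}$-linear. To build the inverse I would apply Lemma \ref{lem:rational}(1) with $V = W = M$, obtaining a bijection $\dual : \Hom(M, R \ot M) \xrightarrow{\cong} \Hom_{\rat}(R^{\vee} \ot M, M)$ with $\dual(\delta_M)$ equal to the $R^{\vee}$-action of $D(M)$. I then define $D'$ on a rational module $(N,\mu)$ by equipping $N$ with the comodule map $\dual^{-1}(\mu)$. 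Because $\dual$ is a bijection fixing the underlying space and identifying $\delta_M$ with the action of $D(M)$, the identities $D \circ D' = \id$ and $D' \circ D = \id$ are then formal; the real content is to verify that $D'$ lands in ${^R(\ydH)}$, that is, that $\delta := \dual^{-1}(\mu)$ is a coassociative, counital $R$-comodule structure and a morphism in $\ydH$.

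The heart of the argument is this transfer of axioms across the non-degenerate pairing, and this is where I expect the main obstacle. Counitality of $\delta$ corresponds to unitality of $\mu$ via $\langle 1, -\rangle = \varepsilon$, and the morphism-in-$\ydH$ property of $\delta$ corresponds to that of $\mu$ precisely because the pairing is itself a morphism in $\ydH$, so that $\dual$ and $\dual^{-1}$ respect $H$-linearity and $H$-colinearity. The delicate step is coassociativity: to deduce $(\Delta_R \ot \id)\delta = (\id \ot \delta)\delta$ from associativity of $\mu$, I would pair the two candidate maps $M \to R \ot R \ot M$ against an arbitrary $\xi \ot \eta \in R^{\vee} \ot R^{\vee}$, use \eqref{pair5} to turn each factor of $\Delta_R$ into the corresponding product in $R^{\vee}$, and then invoke non-degeneracy \eqref{pair1} slotwise to cancel the two $R^{\vee}$ test arguments. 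This cancellation is legitimate because $\delta(m)$ and its once-iterated coaction lie in $E_1 \ot E_2 \ot M$ with $E_1, E_2 \sub R$ finite-dimensional, so non-degeneracy applies factor by factor and no completion issues arise. Assembling these equivalences shows that $D$ and $D'$ are mutually inverse functors, whence $D$ is an isomorphism of categories.
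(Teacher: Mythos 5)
Your proposal is correct and follows exactly the route the paper intends: its entire proof of this proposition is the single line ``This follows from Lemma \ref{lem:rational} together with \eqref{pair2} -- \eqref{pair5},'' and your argument is precisely a fleshed-out version of that citation, using $\dual$ and $\dual^{-1}$ from Lemma \ref{lem:rational} to transfer the (co)module structures, \eqref{pair4}/\eqref{pair5} for the (co)unit and (co)associativity axioms, \eqref{pair2}/\eqref{pair3} for the $\ydH$-morphism property, and slotwise non-degeneracy for the cancellations. The finite-dimensionality remark justifying the slotwise use of \eqref{pair1} is exactly the right point to flag, so nothing further is needed.
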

\begin{proof}
This follows from Lemma \ref{lem:rational} together with \eqref{pair2} -- \eqref{pair5}.
\end{proof}

\begin{lemma}\label{lem:rationaltensor}
The trivial left $R \# H$-module $\fie$ is rational as an $R$-module (by restriction). Let $V,W$ be  left $R \# H$-modules, and $V \ot W$ the left $R \#H$-module given by diagonal action. If $V$ and $W$ are rational as left $R$-modules, then $V \ot W$ is a rational $R$-module.
\end{lemma}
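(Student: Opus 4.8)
The plan is to prove the two statements in turn, the second being the substantial one. For the trivial module, the $A$-action on $\fie$ factors through the augmentation of $A=R\#H$, so $R\sub A$ acts through $\varepsilon_R$; by \eqref{pair4} the unit $1\in R^{\vee}$ satisfies $\langle 1,x\rangle=\varepsilon_R(x)$ for all $x\in R$, whence $(\fie1)^{\perp}=\Ke\varepsilon_R$. Taking any $E'\in\cE_{R^{\vee}}$ with $\fie1\sub E'$ gives $E'^{\perp}\sub\Ke\varepsilon_R$, and this $E'^{\perp}$ annihilates $\fie$; hence $\fie$ is rational.

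For $V\ot W$, I would fix $v\in V$ and $w\in W$, annihilated by $E_1'^{\perp}$ and $E_2'^{\perp}$ with $E_1',E_2'\in\cE_{R^{\vee}}$, and choose $E'\in\cE_{R^{\vee}}$ containing $E_1'+E_2'$, so that $E'^{\perp}\sub E_1'^{\perp}\cap E_2'^{\perp}$ annihilates both $v$ and $w$. By \eqref{pair9}, $E'^{\perp}$ is a subobject of $R$ in $\ydH$, hence stable under both the $H$-action and the $H$-coaction. I would then invoke the continuity of $\Delta_R$ (the reformulation of \eqref{pair8}) to find $F'\in\cE_{R^{\vee}}$ with
\begin{align*}
\Delta_R(F'^{\perp})\sub E'^{\perp}\ot R+R\ot E'^{\perp},
\end{align*}
and reduce the lemma to the claim that $F'^{\perp}$ annihilates $v\ot w$; an arbitrary element $\sum_k v_k\ot w_k$ is then handled by collecting the finitely many resulting spaces $F'$ into a single member of $\cE_{R^{\vee}}$.

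To prove the claim, take $r\in F'^{\perp}$. The diagonal $R$-action on $V\ot W$ is governed by the comultiplication of $A$ restricted to $R$, which by \eqref{cosmash} (with $h=1$) reads $\Delta(r)=r\swo1{r\swo2}\sw{-1}\ot{r\swo2}\sw0$, so that
\begin{align*}
r(v\ot w)=\bigl(r\swo1{r\swo2}\sw{-1}\bigr)v\ot{r\swo2}\sw0 w.
\end{align*}
Writing $\Delta_R(r)$ as a sum of elementary tensors, each lying in $E'^{\perp}\ot R$ or in $R\ot E'^{\perp}$, I would argue termwise. If the second leg $r\swo2$ lies in $E'^{\perp}$, then so does its coaction component ${r\swo2}\sw0$ by costability, and it annihilates $w$. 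If the first leg $r\swo1$ lies in $E'^{\perp}$, then rewriting the product $r\swo1 h$ in $A$ by the smash rule \eqref{smash2} with $h={r\swo2}\sw{-1}$ gives $(r\swo1 h)v=h\sw2\bigl((\cS^{-1}(h\sw1)\lact r\swo1)v\bigr)$, and $H$-stability places $\cS^{-1}(h\sw1)\lact r\swo1$ in $E'^{\perp}$, which annihilates $v$. In either case the term vanishes, so $r(v\ot w)=0$.

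The main obstacle is precisely this last step: the diagonal $R$-action on $V\ot W$ is not the braided comultiplication $\Delta_R$ but its $H$-twisted form coming from $\Delta$ on $A$, so the mere membership $r\swo1\in E'^{\perp}$ does not by itself annihilate $v$, because the $H$-factor ${r\swo2}\sw{-1}$ has acted on $v$ first. The argument closes only because \eqref{pair9} guarantees that $E'^{\perp}$ is stable under both the $H$-action and the $H$-coaction, which together with \eqref{smash2} keeps the twisted elements inside $E'^{\perp}$.
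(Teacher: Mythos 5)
Your proof is correct and follows the paper's argument essentially step for step: rationality at $v$ and $w$, cofinality to merge $E_1'$ and $E_2'$, continuity \eqref{pair8} of $\Delta_R$, the cosmash formula \eqref{cosmash}, the rewriting of the first tensorand via \eqref{smash2}, and \eqref{pair9} for the stability of $E'^{\perp}$ under the $H$-action and $H$-coaction. You merely spell out the termwise case analysis that the paper compresses into its final sentence, and your closing remark correctly identifies the twisted comultiplication as exactly the point that the paper's appeal to \eqref{smash2} and \eqref{pair9} is meant to handle.
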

\begin{proof}
The trivial $R$-module $\fie$ is rational since for all $x \in (\fie 1_{R^{\vee}})^{\perp}$,
\begin{align*}
x 1_{\fie} = \varepsilon(x) = \langle 1_{R^{\vee}},x \rangle = 0
\end{align*}
by \eqref{pair4}.

To prove that $V \ot W$ is rational as an $R$-module, let $v \in V,w \in W$. It is enough to show that $E^{\perp} (v \ot w) =0$ for some $E \in \cE_{R^{\vee}}$. Since $V$ and $W$ are rational $R$-modules, there are $E_1, E_2 \in \cE_{R^{\vee}}$ with $E_1^{\perp} v =0, E_2^{\perp} w =0$. Let $E_3 \in \cE_{R^{\vee}}$ with $E_1 + E_2 \sub E_3$. Then $E_3^{\perp}v=0, E_3^{\perp}w=0$. By \eqref{pair8} there is a subspace $E \in \cE_{R^{\vee}}$ such that
\begin{align}
&\Delta_R(E^{\perp}) \sub E_3^{\perp} \ot R + R \ot E_3^{\perp}.\label{continuity}
\end{align}
Let $r \in E^{\perp}$. Then by \eqref{cosmash},
\begin{align}
r(v \ot w) = r\swo1 {r\swo2}\sw{-1} v \ot {r\swo2}\sw0 w.\label{action1}
\end{align}
We rewrite the first tensorand on the right-hand side in \eqref{action1} according to the multiplication rule \eqref{smash2} for elements in $R \# H$. Then the equality $r(v \ot w)=0$ follows from \eqref{continuity}, \eqref{action1} and \eqref{pair9}.
\end{proof}
Lemma \ref{lem:rationaltensor} also holds for $R^{\vee}$ instead of $R$ using \eqref{pair5} and \eqref{pair7} instead of \eqref{pair4} and \eqref{pair8}.

\begin{lemma}\label{lem:inversepair}
Assume that the antipodes of $R$ and of $R^{\vee}$ are bijective. Define  $\langle\;,\;\rangle' : R \ot R^{\vee} \to \fie$ by
\begin{align}
\langle x,\xi \rangle' = \langle \xi,\cS^2(x) \rangle\label{defin:inversepair}
\end{align}
for all $x \in R,\xi \in R^{\vee}$, where $\cS$ is the antipode of $R \# H$. Then $(R^{\vee},R)$ together with $\langle \;,\; \rangle' : R \ot R^{\vee} \to \fie$ is a dual pair of Hopf algebras in $\ydH$.
\end{lemma}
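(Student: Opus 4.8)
The plan is to check directly that $(R^{\vee},R)$ together with $\langle\;,\;\rangle'$ meets every requirement of Definition \ref{defin:pair}, exploiting that $\langle x,\xi\rangle' = \langle\xi,\phi(x)\rangle$ for the single map $\phi := \cS^2|_R : R\to R$, where $\cS=\cS_A$ is the antipode of $A=R\#H$. By hypothesis and the equivalence recorded after \eqref{bigS2}, bijectivity of $\cS_R$ and $\cS_{R^{\vee}}$ makes $\cS$ bijective, and by \eqref{bigS2} one has $\cS^2(r)=\cS_R^2(\theta_R(r))\in R$; hence $\phi$ is a linear automorphism of $R$. The conceptual core is that $\phi$ is a bijective morphism of braided Hopf algebras $R\to R$ that intertwines the Yetter--Drinfeld structure \emph{up to the twist by} $\cS^2$. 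I would isolate four facts: (i) $\phi(xy)=\phi(x)\phi(y)$, since $R\sub A$ is a subalgebra stable under the algebra automorphism $\cS^2$ of $A$; (ii) $\copr_R\phi=(\phi\ot\phi)\copr_R$, which follows from $\copr_R=(\vartheta\ot\id)\copr_A|_R$ (see \eqref{comult1}) together with the commutation $\vartheta\cS^2=\cS^2\vartheta$ — itself a short consequence of \eqref{vartheta}, of $\pi$ being a Hopf algebra map, and of $\cS^2$ being a coalgebra map of $A$; (iii) $\phi(h\lact x)=\cS^2(h)\lact\phi(x)$, from \eqref{theta1} and the $H$-linearity of the morphism $\cS_R^2$; and (iv) $\phi(x)\sw{-1}\ot\phi(x)\sw0=\cS^2(x\sw{-1})\ot\phi(x\sw0)$, from \eqref{theta2} and the $H$-colinearity of $\cS_R^2$.

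Granting (i)--(iv), the bilinear axioms transfer almost mechanically. For the new \eqref{pair5}, $\langle xy,\xi\rangle'=\langle\xi,\phi(x)\phi(y)\rangle$ by (i), and the original \eqref{pair4} rewrites this as $\langle\xi\swo1,\phi(y)\rangle\langle\xi\swo2,\phi(x)\rangle=\langle y,\xi\swo1\rangle'\langle x,\xi\swo2\rangle'$, the asserted identity. Dually the new \eqref{pair4} follows from the original \eqref{pair5} and the coalgebra property (ii), which lets one replace $\phi(x)\swo1\ot\phi(x)\swo2$ by $\phi(x\swo1)\ot\phi(x\swo2)$. For the Yetter--Drinfeld compatibilities I would use (iii)--(iv): the new \eqref{pair2} reduces, via (iii), to $\langle\xi,\cS^2(h)\lact\phi(x)\rangle=\langle\cS(h)\lact\xi,\phi(x)\rangle$, which is the original \eqref{pair2} read backwards; and the new \eqref{pair3} is obtained by applying the original \eqref{pair3} to $\phi(x)$, using (iv) to get $\xi\sw{-1}\langle\xi\sw0,\phi(x)\rangle=\cS(x\sw{-1})\langle\xi,\phi(x\sw0)\rangle$, and then applying $\cS^{-1}$ to this identity in $H$. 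The counit normalisations are immediate from \eqref{pair4}, \eqref{pair5} and $\varepsilon\cS^2=\varepsilon$.

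For the remaining requirements, non-degeneracy \eqref{pair1} for $\langle\;,\;\rangle'$ is inherited from $\langle\;,\;\rangle$ because $\phi$ is bijective. As cofinal families of finite-dimensional subobjects I would simply reuse $\cE_{R^{\vee}}$ and $\cE_R$; note that $\cS^{\pm2}$ carries $\ydH$-subobjects to $\ydH$-subobjects by (iii)--(iv), and that the orthogonals for $\langle\;,\;\rangle'$ satisfy ${}^{\perp'}E'=\cS^{-2}(E'^{\perp})$ and $E^{\perp'}={}^{\perp}(\cS^2E)$. The one genuinely new phenomenon is the topology on $R$: the functionals $x\mapsto\langle x,\xi\rangle'=\langle\xi,\cS^2(x)\rangle$ equal the old ones precomposed with $\phi$, so the finite topology of $R$ for $\langle\;,\;\rangle'$ is the $\phi$-pullback of the old one, whereas the finite topology of $R^{\vee}$ is \emph{unchanged} (as $x$ runs over $R$ the functionals $\xi\mapsto\langle\xi,\cS^2(x)\rangle$ run through all original evaluation maps $\langle\;,y\rangle$, since $\phi$ is surjective). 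Consequently the new \eqref{pair7} is literally the old \eqref{pair7}, while the new \eqref{pair8}, continuity of $\Delta_R$, I would factor as $(R,\mathrm{new})\xrightarrow{\phi}(R,\mathrm{old})\xrightarrow{\copr_R}(R\ot R,\mathrm{old})\xrightarrow{\phi^{-1}\ot\phi^{-1}}(R\ot R,\mathrm{new})$, which equals $\copr_R$ by (ii) and is continuous because $\phi$ is a homeomorphism $(R,\mathrm{new})\to(R,\mathrm{old})$ and the old $\copr_R$ is continuous by \eqref{pair8}.

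The main obstacle is establishing (ii)--(iv) correctly: $\phi=\cS^2|_R$ is \emph{not} a morphism in $\ydH$ but only a $\cS^2$-twisted one, so the Yetter--Drinfeld axioms \eqref{pair2}--\eqref{pair3} pick up exactly the antipode factors that make the backward application of the original axioms close up; and one must keep the two distinct finite topologies on $R$ apart when verifying \eqref{pair8}. Once the twisted (co)module behaviour and the topology transport are pinned down, the rest is bookkeeping with the formulas of Section \ref{sec:bosonization}.
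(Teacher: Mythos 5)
Your proof is correct. For \eqref{pair1}--\eqref{pair5} it supplies exactly the bookkeeping behind the paper's one-line remark that these axioms are ``easily checked'' using \eqref{bigS2}: your facts (i)--(iv) about $\phi=\cS^2|_R$ being an algebra map, comultiplicative for $\Delta_R$, and a module/comodule map twisted by $\cS^2$ are what that check amounts to, and your treatment of the topology on $R^{\vee}$ (namely $E^{\perp'}={}^{\perp}(\cS^2(E))$ with $\cS^2|_R$ bijective, so that topology is unchanged) is literally the paper's argument for \eqref{pair7}. Where you genuinely diverge is the continuity of $\Delta_R$, i.e.\ \eqref{pair8}. The paper proves the stronger statement that the two weak topologies on $R$ \emph{coincide}: using \eqref{pair6} and \eqref{bigS2} it rewrites $\langle x,\xi \rangle' = \langle \cS_{R^{\vee}}^2(\xi), \cS(x\sw{-1})\lact x\sw0 \rangle$, thereby shifting the antipode square to the $R^{\vee}$-side and isolating $\theta_R$, and then concludes ${}^{\perp'}E_1 = (\cS_{R^{\vee}}^2(E_1))^{\perp}$ because $\theta_R$ restricts to a bijection of the $\ydH$-subobject $(\cS_{R^{\vee}}^2(E_1))^{\perp}$ (by \eqref{ydiso} and \eqref{pair9}), with $\{\cS_{R^{\vee}}^2(E_1) \mid E_1 \in \cE_{R^{\vee}}\}$ cofinal since $\cS_{R^{\vee}}$ is bijective; continuity of $\Delta_R$ is then the old \eqref{pair8} verbatim. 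You instead stop at the weaker observation that ${}^{\perp'}E_1 = \phi^{-1}(E_1^{\perp})$ makes $\phi$ a homeomorphism $(R,\mathrm{new})\to(R,\mathrm{old})$ and recover continuity by conjugating the old $\Delta_R$ through $\phi^{\pm1}$ via your intertwining identity (ii). Both arguments are sound; yours is more functorial and avoids \eqref{pair6}, \eqref{pair9} and the $\theta_R$-stability of orthogonals entirely, while the paper's computation buys the sharper conclusion that the finite topology on $R$ (and hence, e.g., the class of rational $R$-modules) is the same for both pairings --- a convenient fact when $\langle\;,\;\rangle$ and $\langle\;,\;\rangle'$ are later used interchangeably on rational objects, as in Lemma \ref{lem:second2} and Theorem \ref{theor:second}.
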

\begin{proof}
Using \eqref{bigS2}, \eqref{pair1} -- \eqref{pair5} for $\langle\;,\; \rangle'$ are easily checked.

We denote by $\perp$ (respectively $\perp'$) the complements with respect to $\langle\;,\; \rangle$ (respectively to $\langle \;,\; \rangle'$).

To prove \eqref{pair7} for $\langle\;,\; \rangle'$, we note that by \eqref{defin:inversepair} for all finite-dimensional subspaces $E \sub R$, $E^{\perp'} = {^{\perp}(\cS^2(E))}$. By assumption and \eqref{bigS2}, $\cS^2$ induces an isomorphism on $R$. Hence the weak topologies of $R^{\vee}$ defined with respect to $\langle\;,\; \rangle$ and to $\langle\;,\; \rangle'$ coincide, and \eqref{pair7} for $\langle\;,\; \rangle'$ follows.

To prove \eqref{pair8} for $\langle\;,\; \rangle'$, we again show that the weak topologies of $R$ defined with respect to $\langle\;,\; \rangle$ and to $\langle\;,\; \rangle'$ coincide. For all $x \in R, \xi \in R^{\vee}$,
\begin{align*}
\langle x, \xi \rangle' &= \langle \xi, \cS^2(x) \rangle && \\
&= \langle \xi, \cS_R^2(\cS(x\sw{-1}) \lact x\sw0) \rangle && \text{(by \eqref{bigS2})}\\
 &= \langle \cS_{R^{\vee}}^2(\xi), \cS(x\sw{-1}) \lact x\sw0 \rangle &&\text{(by \eqref{pair6})}.
 \end{align*}
Hence for all $E_1 \in \cE_{R^{\vee}}$,
\begin{align*}
{^{\perp'}E_1} &= \{ x \in R \mid \cS(x\sw{-1}) \lact x\sw0  \in (\cS_{R^{\vee}}^2(E_1))^{\perp}\}\\
&= (\cS_{R^{\vee}}^2(E_1))^{\perp},
\end{align*}
where the second equality follows from \eqref{ydiso} and \eqref{pair9}.
This proves our claim, since $\{\cS_{R^{\vee}}^2(E_1) \mid E_1 \in \cE_{R^{\vee}} \}$
is a cofinal subset of $\cE_{R^{\vee}}$ by the bijectivity of $\cS_{R^{\vee}}$.
\end{proof}

\section{Review of monoidal categories and their centers}\label{sec:categories}

Our reference for monoidal categories is \cite{b-Kassel1}, where the term tensor categories is used. Let $\cC$ and $\cD$ be strict monoidal categories, and $F : \cC \to \cD$ a  functor. We assume that $F(I)$ is the unit object in $\cD$. Let
$$\varphi = (\varphi_{X,Y} : F(X) \ot F(Y) \to F(X \ot Y))_{X,Y \in \cC}$$
 be a family of natural isomorphisms. Then $(F,\ph)$ is a {\em monoidal functor}  if for all $U,V,W \in \cC$
\begin{align}
\ph_{I,U} = \id_{F(U)} = \ph_{U,I},
\end{align}
and the diagram
\begin{align}\label{monoidal}
\begin{CD}
F(U) \ot F(V) \ot F(W) @>{\id \ot \varphi_{V,W}}>>  F(U) \ot F(V \ot W)\\
@V{\varphi_{U,V} \ot \id}VV    @V{\varphi_{U,V \ot W}}VV\\
F(U \ot V) \ot F(W) @>{\varphi_{U \ot V,W}}>>  F(U \ot V \ot W)
\end{CD}
\end{align}
commutes. A monoidal functor $(F,\ph)$ is called {\em strict} if $\ph = \id$. If $\cC$ and $\cD$ are strict braided monoidal categories, then a monoidal functor $(F,\ph)$ is {\em braided} if
for all $X,Y \in \cC$ the diagram
\begin{align}\label{braidedmonoidal}
\begin{CD}
F(X) \ot F(Y) @>{\varphi_{X,Y}}>>  F(X \ot Y)\\
@V{c_{F(X),F(Y)}}VV    @V{F(c_{X,Y})}VV\\
F(Y) \ot F(X) @>{\varphi_{Y,X}}>>  F(Y \ot X)
\end{CD}
\end{align}
commutes. A {\em monoidal equivalence} (respectively {\em isomorphism}) is a monoidal functor $(F,\ph)$ such that $F$ is an equivalence (respectively an isomorphism) of categories. Recall that a functor $F : \cC \to \cD$ is called an isomorphism if there is a functor $G : \cD \to \cC$ with $FG = \id_{\cD}$ and $GF = \id_{\cC}$.
A {\em braided monoidal equivalence} (respectively {\em isomorphism}) is a monoidal equivalence (respectively isomorphism) $(F,\varphi)$ such that $(F,\varphi)$ is a braided monoidal functor.

If $(F,\varphi) : \cC \to \cD$ and $(G ,\psi): \cD \to \cE$ are monoidal (respectively braided monoidal) functors, then the composition
\begin{align}\label{monoidalcomposition}
(GF,\lambda) : \cC \to \cE, \;\lambda_{X,Y} = G(\varphi_{X,Y})\psi_{F(X),F(Y)}, \text{ for all }X,Y \in \cC,
\end{align}
is a monoidal (respectively braided monoidal) functor.

Let $(F,\varphi) : \cC \to \cD$ be a monoidal isomorphism of categories with inverse functor $G : \cD \to \cC$. Then $(G,\psi)$ is a monoidal functor with
\begin{align}
\psi_{U,V}= G(\varphi_{G(U),G(V)})^{-1}: G(U) \ot G(V)  \to G(U \ot V)\label{inversemonoidal}
\end{align}
for all $U,V \in \cD$.

For later use we note the following  lemma.
\begin{lemma}\label{lem:diagram}
Let $\cC,\cD$ and $\cE$ be strict monoidal and braided categories, and $F : \cC \to \cD$ a functor. Let $(G,\psi) : \cD \to \cE$ and $(GF, \lambda) : \cC \to \cE$ be braided monoidal functors. Assume that  the functor $G$ is fully faithful.  Then there is exactly one family $\varphi = (\varphi_{X,Y})_{X,Y \in \cC}$ such that $(F,\varphi)$ is a braided monoidal functor and $$(GF,\lambda)=(\cC \xrightarrow{(F,\varphi)} \cD \xrightarrow{(G,\psi)} \cE).$$
\end{lemma}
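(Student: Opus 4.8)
The plan is to let the composition formula \eqref{monoidalcomposition} dictate what $\varphi$ must be, and then to verify every functor axiom by pushing it through $G$ and exploiting that $G$ is faithful. If $(F,\varphi)$ is to satisfy $(GF,\lambda)=(G,\psi)\circ(F,\varphi)$, then \eqref{monoidalcomposition} forces $\lambda_{X,Y}=G(\varphi_{X,Y})\,\psi_{F(X),F(Y)}$ for all $X,Y\in\cC$; since each $\psi_{F(X),F(Y)}$ is invertible this is equivalent to $G(\varphi_{X,Y})=\lambda_{X,Y}\,\psi_{F(X),F(Y)}^{-1}$. The right-hand side is a morphism $G(F(X)\ot F(Y))\to G(F(X\ot Y))$ in $\cE$, so because $G$ is fully faithful there is exactly one morphism $\varphi_{X,Y}:F(X)\ot F(Y)\to F(X\ot Y)$ in $\cD$ with this image. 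This at once establishes uniqueness of the family and defines it. Moreover $\lambda_{X,Y}$ and $\psi_{F(X),F(Y)}$ are isomorphisms, hence so is $G(\varphi_{X,Y})$, and a fully faithful functor reflects isomorphisms, so each $\varphi_{X,Y}$ is an isomorphism.

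It remains to check that $(F,\varphi)$ is a braided monoidal functor, and the mechanism is uniform: to prove an identity of morphisms in $\cD$ it suffices, by faithfulness of $G$, to prove its image under $G$, where one may freely substitute $G(\varphi_{X,Y})=\lambda_{X,Y}\psi_{F(X),F(Y)}^{-1}$, the naturality of $\psi$, and the already-known axioms of $(G,\psi)$ and $(GF,\lambda)$. Naturality of $\varphi$ follows by rewriting $G(F(f)\ot F(g))$ through naturality of $\psi$ and then applying naturality of $\lambda$. The unit condition $\varphi_{I,U}=\id_{F(U)}=\varphi_{U,I}$ holds because $F(I)$ is the unit object of $\cD$, so $\psi_{F(I),F(U)}=\id$ and $\lambda_{I,U}=\id$, whence $G(\varphi_{I,U})=\id=G(\id_{F(U)})$. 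The braiding axiom \eqref{braidedmonoidal} is the cleanest step: applying $G$ and substituting the braiding axioms of $(G,\psi)$ and of $(GF,\lambda)$, both paths collapse to $\lambda_{Y,X}\,c_{GF(X),GF(Y)}\,\psi_{F(X),F(Y)}^{-1}$, so they coincide.

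I expect the associativity coherence \eqref{monoidal} to be the main obstacle, solely because of the factors $\varphi_{U,V}\ot\id$ and $\id\ot\varphi_{V,W}$: applying $G$ to $\varphi_{U,V}\ot\id_{F(W)}$ does not give $G(\varphi_{U,V})\ot\id$ directly, so one first uses naturality of $\psi$ to write $G(\varphi_{U,V}\ot\id_{F(W)})=\psi_{F(U\ot V),F(W)}\,(G(\varphi_{U,V})\ot\id)\,\psi_{F(U)\ot F(V),F(W)}^{-1}$, and symmetrically for the other factor. After substituting $G(\varphi_{U,V})=\lambda_{U,V}\psi_{F(U),F(V)}^{-1}$ and cancelling, the image under $G$ of the lower path of \eqref{monoidal} becomes
\begin{align*}
\lambda_{U\ot V,W}\,(\lambda_{U,V}\ot\id)\,(\psi_{F(U),F(V)}^{-1}\ot\id)\,\psi_{F(U)\ot F(V),F(W)}^{-1}.
\end{align*}
Here I would apply the coherence diagram \eqref{monoidal} for $(GF,\lambda)$ to the first two factors and the coherence diagram \eqref{monoidal} for $(G,\psi)$ at $F(U),F(V),F(W)$, in inverted form, to the last two factors, obtaining
\begin{align*}
\lambda_{U,V\ot W}\,(\id\ot G(\varphi_{V,W}))\,\psi_{F(U),F(V)\ot F(W)}^{-1},
\end{align*}
which is exactly the image under $G$ of the upper path. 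Faithfulness of $G$ then yields \eqref{monoidal} for $(F,\varphi)$, completing the verification. The whole argument is thus a single idea—\emph{$G$ fully faithful reflects the coherence data}—applied four times, with the associativity step being the only one that requires genuinely reorganizing the $\psi$-factors.
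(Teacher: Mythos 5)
Your proposal is correct and follows exactly the paper's route: the paper likewise uses full faithfulness of $G$ to obtain the unique $\varphi_{X,Y}$ with $\lambda_{X,Y}=G(\varphi_{X,Y})\psi_{F(X),F(Y)}$ and then states ``one checks that $(F,\varphi)$ is a braided monoidal functor,'' which is precisely the verification you carry out in detail. Your explicit checks of naturality, the unit condition, the coherence diagram \eqref{monoidal}, and the braiding axiom \eqref{braidedmonoidal} -- including the observation that full faithfulness reflects isomorphisms, so each $\varphi_{X,Y}$ is invertible -- are all correct and simply fill in what the paper leaves to the reader.
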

\begin{proof}
Since $G$ is fully faithful, for all $X,Y \in \cC$ there is exactly one morphism $\varphi_{X,Y} : F(X) \ot F(Y) \to F(X \ot Y)$ with $\lambda_{X,Y} = G(\varphi_{X,Y})\psi_{F(X),F(Y)}$. Then one checks that $(F,\varphi)$ is a braided monoidal functor.
\end{proof}

We recall  the notion of the (left) {\em center} $\cZ(\cC)$ of a strict monoidal category $\cC$ with tensor product $\ot$ and unit object $I$ (see \cite[XIII.4]{b-Kassel1}, where the right center is discussed). Objects of $\cZ(\cC)$ are pairs $(M,\ga)$, where $M \in \cC$, and
$$\ga = (\ga_X : M \ot X \to X \ot M)_{X \in \cC}$$
 is a family of natural isomorphisms such that
\begin{align}
\ga_{X \ot Y} &= (\id_X \ot \ga_Y)(\ga_X \ot \id_Y)\label{objectcenter}
\end{align}
for all $X,Y \in \cC$.

Note that by \eqref{objectcenter}
\begin{align}
\ga_I = \id_M\label{objectcenterI}
\end{align}
for all $(M,\ga) \in \cZ(\cC)$.

A morphism  $f : (M,\ga) \to (N,\la)$ between objects  $(M,\ga)$ and $(N,\la)$ in $\cZ(\cC)$ is a morphism $f : M \to N$ in $\cC$ such that
\begin{align}\label{morphismcenter}
(\id_X \ot f) \ga_X = \la_X (f \ot \id_X)
\end{align}
for all $X \in \cC$. Composition of morphisms is given by the composition of morphisms in $\cC$. The category $\cZ(\cC)$ is braided monoidal. For objects $(M,\ga),(N,\la)$ in $\cZ(\cC)$ the tensor product is defined by
\begin{align}
(M,\ga) \ot (N,\la) &= (M \ot N, \si),\label{center1}\\
\si_X &= (\ga_X \ot \id_N)(\id_M \ot \la_X)\label{center2}
\end{align}
for all $X \in \cC$. The pair $(I, \id)$, where $\id_X = \id_{I \ot X}$ for all $X \in \cC$, is the unit in $\cZ(\cC)$. The braiding is defined by
\begin{align}\label{braidingcenter}
\ga_N : (M,\ga) \ot (N,\la) \to (N,\la) \ot (M,\ga).
\end{align}

We note that a monoidal isomorphism $(F,\varphi) : \cC \to \cD$ defines in the natural way a braided monoidal isomorphism between the centers of $\cC$ and $\cD$. For all objects $(M,\ga) \in \cC$ let
\begin{align}
F^{\cZ}(M,\ga) = (F(M), \ti{\ga}),
\end{align}
 and for all $X \in \cC$, the isomorphism $\ti{\ga}_{F(X)}$ is defined by the commutative diagram
\begin{align}
\begin{CD}
F(M) \ot F(X)  @>{\ti{\ga}_{F(X)}}>>  F(X) \ot F(M)\\
@V{\ph_{M,X}}VV    @V{\ph_{X,M}}VV\\
F(M \ot X) @>{F(\ga_X)}>>  F(X \ot M).
\end{CD}\label{defin:tilde}
\end{align}
For morphisms $f$ in $\cZ(\cC)$ we define $F^{\cZ}(f) = F(f)$. For $(M,\ga),(N,\la) \in \cZ(\cC)$ let
\begin{align}
{\ph}^{\cZ}_{(M,\ga),(N,\la)} = \ph_{M,N}.\label{monoidalcenter2}
\end{align}
Then the next lemma follows by carefully writing down the definitions.
\begin{lemma}\label{lem:center}
Let $(F,\varphi) : \cC \to \cD$ be a monoidal isomorphism. Then
\begin{align*}
(F^{\cZ},{\ph}^{\cZ})  : \cZ(\cC) \to \cZ(\cD)
\end{align*}
is a braided monoidal isomorphism.
\end{lemma}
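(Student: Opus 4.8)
The plan is to verify directly, from the definitions \eqref{defin:tilde} and \eqref{monoidalcenter2}, the four assertions that together constitute the claim: that $F^{\cZ}$ really lands in $\cZ(\cD)$ (i.e.\ is a well-defined functor), that $(F^{\cZ},\ph^{\cZ})$ is monoidal, that it is braided, and that it is an isomorphism. Throughout I use that $F$ is bijective on objects and morphisms, so that every object of $\cD$ is $F(X)$ for a unique $X \in \cC$ and the family $\ti{\ga} = (\ti{\ga}_{F(X)})_{X \in \cC}$ given by \eqref{defin:tilde} is indeed defined on all of $\cD$; explicitly $\ti{\ga}_{F(X)} = \ph_{X,M}^{-1} F(\ga_X) \ph_{M,X}$.

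First I would show that $F^{\cZ}(M,\ga) = (F(M),\ti{\ga})$ is an object of $\cZ(\cD)$. Naturality of the family $\ti{\ga}$ follows from naturality of $\ga$, naturality of $\ph$ in both arguments, and functoriality of $F$, applied to the displayed formula for $\ti{\ga}_{F(X)}$. The substantive point is the hexagon \eqref{objectcenter} for $\ti{\ga}$, which must be checked on the object $F(X) \ot F(Y)$. Here I would first invoke naturality of $\ti{\ga}$ with respect to the isomorphism $\ph_{X,Y} : F(X) \ot F(Y) \to F(X \ot Y)$ in order to rewrite $\ti{\ga}_{F(X) \ot F(Y)}$ in terms of $\ti{\ga}_{F(X \ot Y)}$, then apply \eqref{defin:tilde} to $F(X \ot Y)$ together with the hexagon \eqref{objectcenter} for the original half-braiding $\ga$, and finally absorb the resulting $\ph$'s by means of the coherence diagram \eqref{monoidal} for $(F,\ph)$. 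This computation, in which the coherence of $\ph$ exactly cancels the discrepancy between $F(X) \ot F(Y)$ and $F(X \ot Y)$, is the main obstacle; everything else is comparatively formal. That $F^{\cZ}(f) = F(f)$ is a morphism in $\cZ(\cD)$ whenever $f$ is a morphism in $\cZ(\cC)$ follows by applying $F$ to \eqref{morphismcenter} and conjugating by the appropriate $\ph$'s via \eqref{defin:tilde}.

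Next I would check that $(F^{\cZ},\ph^{\cZ})$, with $\ph^{\cZ}_{(M,\ga),(N,\la)} = \ph_{M,N}$ as in \eqref{monoidalcenter2}, is monoidal. Since the tensor product in a center is carried by the underlying tensor product and $\ph^{\cZ}$ is literally $\ph$ on underlying objects, the coherence diagram \eqref{monoidal} and the unit conditions for $(F^{\cZ},\ph^{\cZ})$ reduce verbatim to those for $(F,\ph)$, and naturality of $\ph^{\cZ}$ likewise follows from that of $\ph$. The one genuine verification is that each $\ph_{M,N}$ is a morphism in $\cZ(\cD)$, i.e.\ that it satisfies \eqref{morphismcenter} relative to the tensor-product half-braiding $\ti{\si}$ of $F^{\cZ}(M,\ga) \ot F^{\cZ}(N,\la)$ computed by \eqref{center2} and to the half-braiding of $F^{\cZ}\bigl((M,\ga)\ot(N,\la)\bigr)$. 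Unwinding both half-braidings through \eqref{center2} and \eqref{defin:tilde} and using naturality of $\ph$ and the coherence \eqref{monoidal} once more settles this.

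Finally, the braided condition \eqref{braidedmonoidal} for $(F^{\cZ},\ph^{\cZ})$ is essentially free: the braiding of $\cZ(\cD)$ on $F^{\cZ}(M,\ga) \ot F^{\cZ}(N,\la)$ is $\ti{\ga}_{F(N)}$ by \eqref{braidingcenter}, the braiding of $\cZ(\cC)$ is $\ga_N$, and $F^{\cZ}$ sends it to $F(\ga_N)$; the required identity $\ph_{N,M}\,\ti{\ga}_{F(N)} = F(\ga_N)\,\ph_{M,N}$ is precisely the defining commutative square \eqref{defin:tilde} with $X = N$. For the isomorphism statement I would invoke \eqref{inversemonoidal}: the inverse functor $G$ of $F$ carries a monoidal structure $(G,\psi)$, so the same construction produces $(G^{\cZ},\psi^{\cZ})$, and since $F$ and $G$ are mutually inverse on objects and morphisms and the two half-braidings are related by the invertible squares \eqref{defin:tilde}, one checks $F^{\cZ}G^{\cZ} = \id_{\cZ(\cD)}$ and $G^{\cZ}F^{\cZ} = \id_{\cZ(\cC)}$. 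Hence $(F^{\cZ},\ph^{\cZ})$ is a braided monoidal isomorphism.
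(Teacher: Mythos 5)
Your proof is correct and is essentially the paper's own approach: the paper proves Lemma \ref{lem:center} with the single remark that it ``follows by carefully writing down the definitions,'' and your verification (well-definedness of $F^{\cZ}$ via naturality and the hexagon \eqref{objectcenter} absorbed by the coherence \eqref{monoidal}, monoidality of $\ph^{\cZ}$, braidedness being the square \eqref{defin:tilde} with $X=N$, and invertibility via \eqref{inversemonoidal}) is precisely that writing-out. You also correctly isolate the point where the hypothesis ``isomorphism'' rather than ``equivalence'' enters, namely that bijectivity of $F$ on objects is needed so that $\ti{\ga}$ is defined on all of $\cD$.
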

Finally we note that we may view the categories of vector spaces and of modules or comodules over a Hopf algebra as strict monoidal categories since the associativity and unit constraints are given by functorial maps.

\section{Relative Yetter-Drinfeld modules}\label{sec:relative}

In this section we assume that $B,C$ are Hopf algebras with bijective antipode, $\rho: B \to C$ is a Hopf algebra homomorphism, and  $\cR \sub {_B\cM}$ is a full subcategory of the category of left $B$-modules closed under tensor products and containing the trivial left $B$-module $\fie$.

\begin{defin}\label{defin:ydBC}
We denote by $\ydBC_{\cR}$ the following monoidal category (depending on the map $\rho$). Objects of $\ydBC_{\cR}$ are left $B$-modules and left $C$-comodules $M$ with comodule structure $\delta : M \to C \ot M, m \mapsto m\sw{-1} \ot m\sw0,$ such that $M \in \cR$ as a module over $B$ and
\begin{align}\label{relativeYD}
\delta(bm) = \rho(b\sw1) m\sw{-1} \rho\cS(b\sw3) \ot b\sw2 m\sw0
\end{align}
for all $m \in M$ and $b \in B$. Morphisms are left $B$-linear and left $C$-colinear maps.

The tensor product $M \ot N$ of $M,N \in \ydBC_{\cR}$ is the tensor product of the vector spaces $M,N$ with diagonal action of $B$ and diagonal coaction of $C$.

We define $\ydBC = \ydBC_{\cR}$, when $\cR = {_B\cM}$ is the category of all $B$-modules. The full subcategory of $\ydB$ consisting of all objects $M \in \ydB$ with $M \in \cR$ as a $B$-module is denoted by $\ydB_{\cR}$.
\end{defin}

The Hopf algebra map $\rho : B \to C$ defines a functor
\begin{align}
{^{\rho}(\;)} : \ydB \to \ydBC,
\end{align}
mapping an object $M \in \ydB$  onto ${^{\rho}M}$, where ${^{\rho}M} = M$ as a $B$-module, and where ${^{\rho}M}$ is a $C$-comodule by $M \xrightarrow{\delta_M} B \ot M \xrightarrow{\rho \ot \id_M} C \ot M$.

Let
\begin{align}
\Phi : \ydB_{\cR} \to \cZ(\ydBC_{\cR})
\end{align}
be the functor defined on objects $M \in \ydB_{\cR}$ by
\begin{align}
\Phi(M) = (^{\rho}M,c_M),\;
c_{M,X} : M \ot X \to X \ot M,\; m \ot x \mapsto m\sw{-1} x \ot m \sw0,\label{defin:Phi}
\end{align}
for all $X \in \ydBC_{\cR}$, where $M \to B \ot M,\;m \mapsto m\sw{-1} \ot m\sw0,$ denotes the $B$-comodule structure of $M$. We let $\Phi(f) = f$ for morphisms $f$ in $\ydB_{\cR}$. It is easy to see that $\Phi$ is a well-defined functor.

We need the existence of enough objects in $\ydBC_{\cR}$.
\begin{defin}
The category $\ydBC_{\cR}$ is called {\em $B$-faithful} if the following conditions are satisfied.
\begin{align}
\text{For any }0 \neq b \in B, \;&bX \neq 0 \text{ for some }X \in \ydBC_{\cR}.\label{faithful1}\\
\text{For any }0 \neq t \in B \ot B, \;&t(X \ot Y) \neq 0 \text{ for some }X,Y \in \ydBC_{\cR}.\label{faithful2}
\end{align}
\end{defin}
\begin{examps}\label{exa:regularrep}
(1) Let $B$ be the left $B$-module with the regular representation, and the left $C$-comodule with the coadjoint coaction
\begin{align}
B \to C \ot B, \;b \mapsto \rho(b\sw1 \cS(b\sw3)) \ot b\sw2.\label{coadjoint}
 \end{align}
Then $B$ is an object in $\ydBC$. Since $bB\neq 0$, $t (B \ot B) \neq 0$ for all
$ 0 \neq b \in B$, $0 \neq t \in B \ot B$, the category $\ydBC$ is $B$-faithful.

(2) Let
\begin{align*}
R = \bigoplus_{n \in \no} R(n)
\end{align*}
 be an $\no$-graded Hopf algebra in $\ydH$, and $A = R \# H$ the bosonization. We define $\ydAH$ with respect to the Hopf algebra map $\pi : A \to H$. As in (1), $A$ with the regular representation and the coadjoint coaction with respect to $\pi$ defined in \eqref{coadjoint}, is an object in $\ydAH$.  The $H$-coaction $\delta_A : A \to H \ot A$ can be computed explicitly as
 \begin{align*}
 \delta_A(rh) = r\sw{-1} h\sw1 \cS(h\sw3) \ot r\sw0 h\sw2
 \end{align*}
 for all $r \in R,h \in H$.
 Hence it follows that for all $n \geq 0$,
 \begin{align*}
 \cF^nA = \bigoplus_{i \geq n} R(i) \ot H \sub A
\end{align*}
is an ideal and a left $H$-subcomodule of $A \in \ydAH$. Note that
\begin{align}
\bigcap_{n \geq 0} \cF^nA = 0,\;\bigcap_{n \geq 0} (\cF^nA \ot A + A \ot \cF^nA) = 0.
\end{align}
Hence for any $0 \neq a \in A, 0 \neq t \in A \ot A$ there is an integer $n \geq 0$ with
\begin{align*}
a(A/\cF^nA) \neq 0, \; t (A/\cF^nA \ot A/\cF^nA) \neq 0.
\end{align*}
Thus $\ydAH_{\cR}$ is $A$-faithful for all full subcategories $\cR$  of ${_A\cM}$  such that $A/\cF^nA \in \ydAH_{\cR}$ for all $n \geq 0$.
Note that for all $n \geq0$, $A/\cF^nA$ as an $R$-module is annihilated by $\oplus _{i \geq n} R(i)$.
\end{examps}

\begin{propo}\label{propo:modulecenter}
Assume that $\ydBC_{\cR}$ is $B$-faithful.
\begin{enumerate}
\item The functor $\Phi : \ydB_{\cR} \to \cZ(\ydBC_{\cR})$ is fully faithful, strict monoidal and braided.\label{modulecenter1}
\item Let $(M,\ga) \in \cZ(\ydBC_{\cR})$ with comodule structure $\delta_M : M \to C \ot M$. Assume that there is a $\fie$-linear map $\widetilde{\delta}_M : M \to B \ot M$, denoted by $\widetilde{\delta}_M(m) = m\swe{-1} \ot m\swe0$ for all $m \in M$, with
\begin{align}
    \ga_X (m \ot x) &= m\swe{-1} x \ot m\swe0\label{module1}\\
    \delta_M &= (\rho \ot \id_M)\widetilde{\delta}_M,\label{module2}
    \end{align}
for all $X \in \ydBC_{\cR}, x \in X$ and $m \in M$. Then the map $\widetilde{\delta}_M$ is uniquely determined. Let $\ti{M} =M$ as a $B$-module. Then  $\ti{M} \in \ydB_{\cR}$ with $B$-comodule structure $\widetilde{\delta}_M$, and $\Phi(\ti{M}) = (M,\ga)$.\label{modulecenter2}
\end{enumerate}
\end{propo}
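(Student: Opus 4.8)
The plan is to prove both assertions of Proposition~\ref{propo:modulecenter} using the $B$-faithfulness hypothesis to convert equalities of half-braidings into equalities of algebra elements, which is the mechanism that forces uniqueness and makes the structure maps well-defined.

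For part~\eqref{modulecenter1}, I would first observe that $\Phi$ is strict monoidal by construction: the tensor product in $\cZ(\ydBC_{\cR})$ is given by \eqref{center1}--\eqref{center2}, and a direct check using the diagonal $B$-coaction on $M\ot N$ in $\ydB_{\cR}$ shows that $c_{M\ot N}$ agrees with $\si$ from \eqref{center2}, so that $\Phi(M)\ot\Phi(N)=\Phi(M\ot N)$ with $\ph=\id$. The braiding compatibility \eqref{braidedmonoidal} is the statement that the center-braiding \eqref{braidingcenter}, namely $c_M$ evaluated at $\Phi(N)$, coincides with the braiding of $\ydB$ from \eqref{braiding}; both are $m\ot n\mapsto m\swe{-1}n\ot m\swe0$, so this holds on the nose. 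Faithfulness is clear since $\Phi(f)=f$. The substantive point is fullness: given a morphism $f:\Phi(M)\to\Phi(N)$ in $\cZ(\ydBC_{\cR})$, it is automatically $B$-linear and $C$-colinear, and I must upgrade $C$-colinearity to $B$-colinearity. The compatibility \eqref{morphismcenter} with the half-braidings reads $(\id_X\ot f)c_{M,X}=c_{N,X}(f\ot\id_X)$ for all $X\in\ydBC_{\cR}$, i.e.\ $m\swe{-1}x\ot f(m\swe0)=f(m)\swe{-1}x\ot f(m)\swe0$ in $X\ot N$ for every $x\in X$. Applying the counit of $C$ is not available, so instead I would rewrite this as $\big(m\swe{-1}-f(m)\swe{-1}\big)\otimes(\cdots)$ acting on $X$ and invoke \eqref{faithful1} to conclude $\delta_B(f(m))=(f\ot\id)\delta_B(m)$, i.e.\ $f$ is $B$-colinear, hence a morphism in $\ydB_{\cR}$.

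For part~\eqref{modulecenter2}, uniqueness of $\widetilde\delta_M$ is the cleanest place to use faithfulness: if $\widetilde\delta_M$ and $\widetilde\delta_M'$ both satisfy \eqref{module1}, then for every $X\in\ydBC_{\cR}$ and $x\in X$ one has $(m\swe{-1}-m'\swe{-1})x\ot m\swe0=0$ (after matching the second legs), so \eqref{faithful1} forces $\widetilde\delta_M=\widetilde\delta_M'$ as elements of $B\ot M$. The real work is showing that the given $\widetilde\delta_M$ genuinely makes $\ti M$ a left $B$-comodule and an object of $\ydB_{\cR}$. Coassociativity and counitality of $\widetilde\delta_M$ are forced by the center axiom \eqref{objectcenter}: expanding $\ga_{X\ot Y}=(\id_X\ot\ga_Y)(\ga_X\ot\id_Y)$ via \eqref{module1} gives, for all $X,Y$ and all $x,y$, the identity $(\id\ot\widetilde\delta_M)\widetilde\delta_M$ versus $(\copr_B\ot\id)\widetilde\delta_M$ paired against $x\ot y$, and \eqref{faithful2} then yields coassociativity in $B\ot B\ot M$; counitality comes similarly from \eqref{objectcenterI}. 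The Yetter--Drinfeld compatibility \eqref{YD} for $\ti M$ must be extracted from naturality of $\ga$ together with \eqref{module2}: colinearity of each $\ga_X$ over $C$ and $B$-linearity, combined with \eqref{relativeYD} for the objects $X$, produce the relation $\delta_M(bm)$ in $C\ot M$, and \eqref{module2} lets me lift the $\rho$-image back to $B$, again using faithfulness to remove $\rho$ from the first tensor leg. Finally $\Phi(\ti M)=(M,\ga)$ is immediate from \eqref{defin:Phi} and \eqref{module1}.

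The main obstacle I anticipate is the last compatibility in part~\eqref{modulecenter2}: verifying the Yetter--Drinfeld axiom \eqref{YD} for $\widetilde\delta_M$. Unlike coassociativity, this mixes the $B$-action with the lifted coaction, and the half-braiding $\ga$ only encodes $C$-level information directly through \eqref{relativeYD}; the passage from the $C$-valued relation back to a $B$-valued relation is exactly where \eqref{module2} and \eqref{faithful1} must be deployed carefully, taking into account that $\rho$ need not be injective. All other steps are formal diagram-chasing, so the proof is expected to be short once this lifting argument is set up correctly.
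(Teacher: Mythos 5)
Most of your proposal coincides with the paper's proof: strict monoidality and braidedness of $\Phi$ are immediate from the definitions, fullness and the uniqueness of $\widetilde{\delta}_M$ follow by stripping the test element $x$ via \eqref{faithful1}, and coassociativity and counitality follow from \eqref{objectcenter} with \eqref{faithful2} and from \eqref{objectcenterI}, exactly as you describe. The genuine gap is in the one step you yourself flag as the main obstacle, the Yetter--Drinfeld compatibility of $\widetilde{\delta}_M$. Your plan is to first derive a $C$-valued relation for $\delta_M(bm)$ from the $C$-colinearity of $\ga_X$ and \eqref{relativeYD}, and then to ``lift the $\rho$-image back to $B$'' using \eqref{module2} and faithfulness. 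This lifting step cannot be set up: \eqref{module2} only says $\delta_M=(\rho \ot \id_M)\widetilde{\delta}_M$, so a $C$-valued identity determines $\widetilde{\delta}_M(bm)$ at best up to $\Ke \rho \ot M$, and the faithfulness hypotheses \eqref{faithful1}, \eqref{faithful2} concern elements of $B$ and $B \ot B$ acting on objects of $\ydBC_{\cR}$ --- they give no mechanism to ``remove $\rho$ from the first tensor leg.'' In the extreme case $C=\fie$, $\rho=\varepsilon$ (where the proposition specializes to the classical statement about $\cZ({_B\cM})$), the $C$-level relation carries no information at all about $\widetilde{\delta}_M(bm)$.

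The correct argument avoids $C$ entirely and is a one-line variant of your own uniqueness step. Each $\ga_X$ is by definition a morphism in $\ydBC_{\cR}$, hence in particular $B$-linear; expanding $\ga_X\bigl(b\sw1 m \ot b\sw2 x\bigr)=b\,\ga_X(m \ot x)$ via \eqref{module1} gives
\begin{align*}
(b\sw1 m)\swe{-1}\,b\sw2 x \ot (b\sw1 m)\swe0 \;=\; b\sw1\, m\swe{-1}x \ot b\sw2\, m\swe0
\end{align*}
for all $X \in \ydBC_{\cR}$, $x \in X$, $b \in B$, $m \in M$. Now \eqref{faithful1} strips $x$ exactly as in your uniqueness argument, yielding $(b\sw1 m)\swe{-1} b\sw2 \ot (b\sw1 m)\swe0 = b\sw1 m\swe{-1} \ot b\sw2 m\swe0$ in $B \ot M$; replacing $b$ by $b\sw1$ and multiplying the first tensor factor on the right by $\cS(b\sw2)$ gives the Yetter--Drinfeld condition \eqref{YD} for $\widetilde{\delta}_M$ over $B$. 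So it is the $B$-linearity of the half-braiding, not its $C$-colinearity, that encodes the $B$-level compatibility; with this replacement the remainder of your proof goes through and agrees with the paper's.
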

\begin{proof}
(1) It is clear from the definitions that $\Phi$ is strict monoidal and braided, see \eqref{braidingright}, \eqref{braidingcenter} and \eqref{defin:Phi}. To prove that $\Phi$ is fully faithful, let $M,N \in \ydB$, and $f : \Phi(M) \to \Phi(N)$ a morphism in $\cZ(\ydBC_{\cR})$. In particular, $f : M \to N$ is a left $B$-linear and left $C$-colinear homomorphism.  We have to show that $f$ is left $B$-colinear.
Let $X \in \ydBC_{\cR}$, $m \in M$ and $x \in X$. Then
\begin{align}\label{equationfaithful}
f(m)\sw{-1}x \ot f(m)\sw0 = m\sw{-1}x \ot f(m\sw0),
\end{align}
since $f$ is a morphism in $\cZ(\ydBC_{\cR})$. It follows from \eqref{equationfaithful} and \eqref{faithful1} that
\begin{align*}
f(m)\sw{-1} \ot f(m)\sw0 = m\sw{-1} \ot f(m\sw0)
\end{align*}
 in $B \ot M$ for all $m \in M$, that is, $f$ is $B$-colinear.

(2) The map $\widetilde{\delta}_M$ is uniquely determined by \eqref{faithful1} and  \eqref{module1}. We have to show that $\ti{M}$ is a $B$-comodule with structure map $\widetilde{\delta}_M$, and that $\ti{M} \in \ydB_{\cR}$ with comodule structure $\widetilde{\delta}_M$ and the given $B$-module structure.

Let $X,Y \in \ydBC_{\cR}, x \in X,y \in Y$ and $m \in M$. By \eqref{objectcenter},
\begin{align*}
\Delta(m\swe{-1})(x \ot y) \ot m\swe0 = m\swe{-1}x \ot m \swe0{\swe{-1}}y \ot {m\swe0}{\swe0}.
\end{align*}
Hence $\widetilde{\delta}_M$ is coassociative by \eqref{faithful2}. Let $\fie \in \ydBC{_\cR}$ be the trivial object. Then by \eqref{objectcenterI},
\begin{align*}
1 \ot m = \ga_{\fie}(m \ot 1) = m\swe{-1} 1 \ot m \swe0 = 1 \ot \varepsilon(m\swe{-1}) m\sw0
\end{align*}
for all $m \in M$. Hence the  comultiplication $\widetilde{\delta}_M$ is counitary.

For all $X \in \ydBC_{\cR}$, the map $\ga_X$ is $B$-linear. Hence
\begin{align*}
(b\sw1m)\swe{-1} b\sw2x \ot (b\sw1m)\swe0 = b\sw1 m\swe{-1}x \ot b\sw2m \swe0
\end{align*}
for all $b \in B, m \in M$ and $x \in X$. Hence $\ti{M} \in \ydB_{\cR}$ by \eqref{faithful1}.

Finally $\Phi(\ti{M}) = (M,\ga)$ by \eqref{module1} and \eqref{module2}.
\end{proof}
\begin{remar}
In general, $\Phi : \ydB_{\cR} \to \cZ(\ydBC_{\cR})$ is not an equivalence.
However, in the case when $C = \fie$ and $\rho = \varepsilon$, hence $\ydBC = {_B\cM}$,
it is well-known (compare \cite{b-Kassel1} XIII.5) that $\Phi : \ydB \to \cZ({_B\cM})$
is an equivalence. Indeed, let $(M,\gamma) \in \cZ({_B\cM})$. Define
$m\swe{-1} \ot m\swe0 = \gamma_B(m \ot 1)$ for all $m \in M$,
where the $B$-module structure of $B \in {_B\cM}$ is given by multiplication.
Then for any $X \in {_B\cM}$ and $x \in X$ there is a $B$-linear map
$f : B \to X$ with $f(1) =x$, and $\gamma_X(m \ot x) = m\swe{-1} x \ot m\swe0$
by the naturality of $\gamma$. This proves \eqref{module1}.
Similarly, \eqref{module2} follows by considering the trivial $B$-module $\fie$ and the $B$-linear map $\varepsilon$.
 Moreover, ${_B\cM}$ is $B$-faithful by Example \ref{exa:regularrep} (1).
 Thus in this case the assumption in Proposition \ref{propo:modulecenter} \eqref{modulecenter2} is always satisfied.
\end{remar}
\begin{defin}\label{defin:rydBC}
We denote by $\rydBC$ the monoidal category whose objects are right $B$-modules and right $C$-comodules $M$ with comodule structure denoted by $\delta: M \to M \ot C,\; m \mapsto m\sw{0} \ot m\sw{-1}$, such that
\begin{align}
\delta(mb) = m\sw0  b\sw2 \ot \cS(\rho(b\sw1)) m\sw{1} \rho(b\sw3)\label{rydBC}
\end{align}
for all $m \in M$ and $b \in B$. Morphisms are right $B$-linear and right $C$-colinear maps.

The tensor product $M \ot N$ of $M,N \in \rydBC$ is the tensor product of the vector spaces $M,N$ with diagonal action of $B$ and diagonal coaction of $C$. The monoidal category  $\rydC$ is braided by \eqref{braidingright}.
\end{defin}
We define a functor
\begin{align}
\Psi : \rydC \to \cZ(\rydBC)
\end{align}
on objects $M \in \rydC$ by
\begin{align}
\Psi(M) = (M_{\rho},c_M),\;
c_{M,X} : M \ot X \to X \ot M,\; m \ot x \mapsto x\sw0 \ot m x\sw1,\label{defin:Psi}
\end{align}
for all $X \in \rydBC$, where $M_{\rho}$ is $M$ as a $B$-module  via $\rho$. We let $\Psi(f) = f$ for morphisms $f$ in $\rydC$.

\begin{examp}\label{exa:regularcorep}
Let $C$ be the regular corepresentation with right $C$-comodule structure given by the comultiplication $\Delta_C$ of $C$. We define a right $B$-module structure on $C$ by the adjoint action, that is
\begin{align}
c \vartriangleleft b = \rho\cS(b\sw1)c \rho(b\sw2)
\end{align}
for all $c \in C,b \in B$. Then $C$ is an object in $\rydBC$.
\end{examp}

\begin{propo}\label{propo:comodulecenter}
\begin{enumerate}
\item The functor $\Psi : \rydC \to \cZ(\rydBC)$
is fully faithful, strict monoidal and braided. \label{comodulecenter1}
\item Let $(M,\ga) \in \cZ(\rydBC)$ with module structure $\mu_M : M \ot B \to M$. Assume that there is a $\fie$-linear map $\widetilde{\mu}_M : M \ot C \to M$ such that
\begin{align}
    \ga_X (m \ot x) &= x\sw0 \ot \widetilde{\mu}_M(m \ot x\sw1),\label{comodule1}\\
    \mu_M &=\widetilde{\mu_M} (\id \ot \rho)\label{comodule2}
    \end{align}
for all $X \in \rydBC, x \in X$ and $m \in M$. Then the map $\widetilde{\mu}_M$ is uniquely determined.
Let $\ti{M} = M$ as a $C$-comodule.
 Then $\ti{M} \in \rydC$ with $C$-module structure $\widetilde{\mu}_M$, and
$\Psi(\ti{M}) = (M,\ga)$.
\label{comodulecenter2}
\end{enumerate}
\end{propo}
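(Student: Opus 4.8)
The plan is to treat Proposition \ref{propo:comodulecenter} as the comodule-theoretic mirror of Proposition \ref{propo:modulecenter}, so the overall shape of the argument is the same; the decisive difference is that the role played there by the faithfulness conditions \eqref{faithful1}--\eqref{faithful2} is here taken over by a single universal test object, the regular corepresentation $C \in \rydBC$ of Example \ref{exa:regularcorep}. Its $C$-coaction is $\Delta_C$, so that for $X = C$ the formula \eqref{comodule1} reads $\ga_C(m \ot c) = c\sw1 \ot \widetilde{\mu}_M(m \ot c\sw2)$, and one extracts information by applying the counit $\varepsilon_C$. This is why, in contrast to Proposition \ref{propo:modulecenter}, no faithfulness hypothesis appears in the statement.

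For part (1), that $\Psi$ is strict monoidal and braided will be read off directly by comparing the half-braiding \eqref{defin:Psi} with the tensor product \eqref{center1}--\eqref{center2} and braiding \eqref{braidingcenter} of the center and with the braiding \eqref{braidingright} of $\rydC$; the same routine checks show that $c_M$ satisfies \eqref{objectcenter} and is a morphism in $\rydBC$, so that $\Psi$ is well defined. Since $\Psi(f) = f$ on morphisms, $\Psi$ is faithful. For fullness I would take a morphism $f : \Psi(M) \to \Psi(N)$ in $\cZ(\rydBC)$; as a morphism of $\rydBC$ it is already right $B$-linear and right $C$-colinear, so only $B$-linearity has to be promoted to $C$-linearity. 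Evaluating the center condition \eqref{morphismcenter} on $X = C$ gives $c\sw1 \ot f(mc\sw2) = c\sw1 \ot f(m)c\sw2$, and applying $\varepsilon_C$ to the first tensor factor yields $f(mc) = f(m)c$. Hence $f$ is a morphism in $\rydC$ and $\Psi$ is full.

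For part (2), uniqueness of $\widetilde{\mu}_M$ is immediate: applying $\varepsilon_C \ot \id_M$ to \eqref{comodule1} with $X = C$ gives $\widetilde{\mu}_M(m \ot c) = (\varepsilon_C \ot \id_M)\ga_C(m \ot c)$, so $\widetilde{\mu}_M$ is determined by $\ga_C$. For associativity I would specialize \eqref{objectcenter} to $X = Y = C$, insert \eqref{comodule1} on both sides, and apply $\varepsilon_C \ot \varepsilon_C \ot \id_M$; this collapses to $\widetilde{\mu}_M(m \ot cd) = \widetilde{\mu}_M(\widetilde{\mu}_M(m \ot c) \ot d)$. Unitality follows from \eqref{objectcenterI} evaluated on the trivial object $\fie \in \rydBC$ (coaction $1 \mapsto 1 \ot 1_C$), which gives $\widetilde{\mu}_M(m \ot 1_C) = m$. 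Thus $\ti M = M$ is a right $C$-module via $\widetilde{\mu}_M$ and a right $C$-comodule via the given $\delta_M$.

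The heart of the matter, and the step I expect to be the main obstacle, is the Yetter-Drinfeld compatibility \eqref{rydBC} for $\ti M$ (with $B$ replaced by $C$ and $\rho = \id_C$). Here I would use that the component $\ga_C$, being a morphism in $\rydBC$, is right $C$-colinear. Writing out $\delta_{C \ot M}\ga_C = (\ga_C \ot \id_C)\delta_{M \ot C}$ with the diagonal coactions and \eqref{comodule1}, and applying coassociativity, produces the identity
\begin{align*}
c\sw1 \ot (mc\sw3)\sw0 \ot c\sw2 (mc\sw3)\sw1 = c\sw1 \ot (m\sw0 c\sw2) \ot m\sw1 c\sw3
\end{align*}
in $C \ot M \ot C$, where juxtaposition denotes $\widetilde{\mu}_M$. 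Applying the linear map $c' \ot m' \ot c'' \mapsto m' \ot \cS(c')c''$ and contracting by means of $\cS(c\sw1)c\sw2 \ot c\sw3 = 1 \ot c$ collapses the left-hand side to $\delta_M(mc)$ and the right-hand side to $m\sw0 c\sw2 \ot \cS(c\sw1) m\sw1 c\sw3$, which is exactly \eqref{rydBC}; hence $\ti M \in \rydC$. Finally $\Psi(\ti M) = (M,\ga)$, since the $B$-module structure of $\Psi(\ti M)$ is $\widetilde{\mu}_M(\id \ot \rho) = \mu_M$ by \eqref{comodule2}, its comodule structure is $\delta_M$, and its half-braiding is $c_{\ti M,X}(m \ot x) = x\sw0 \ot \widetilde{\mu}_M(m \ot x\sw1) = \ga_X(m \ot x)$ by \eqref{defin:Psi} and \eqref{comodule1}. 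The delicate points are purely bookkeeping: keeping the comodule Sweedler indices apart from the coproduct indices, and choosing the antipode contraction that isolates $\delta_M(mc)$; notably, only the $C$-colinearity of $\ga_C$ is used for this, not its $B$-linearity.
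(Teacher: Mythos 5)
Your proposal is correct and follows essentially the same route as the paper: testing on the regular corepresentation $C$ of Example \ref{exa:regularcorep} and applying counits for fullness, uniqueness, associativity and unitality, and using the $C$-colinearity of $\ga_C$ for the Yetter--Drinfeld condition \eqref{rydBC}. The only difference is cosmetic: where the paper just says ``apply $\varepsilon\ot\id$'' to the colinearity identity, you make the subsequent antipode contraction $c'\ot m'\ot c''\mapsto m'\ot \cS(c')c''$ explicit, which is a correct elaboration of the same step.
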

\begin{proof}
(1) Again it is clear that $\Psi$ is strict monoidal and braided. To see that $\Psi$ is fully faithful, let $M,N \in \rydC$ and $f : \Psi(M) \to \Psi(N)$ a morphism in $\cZ(\rydBC)$. We have to show that $f$ is right $C$-linear. Let $X =C \in \rydBC$ in Example \ref{exa:regularcorep}. Since $f$ is a morphism in $\cZ(\rydBC)$,
$$x\sw1 \ot f(mx\sw2) = x\sw1 \ot f(m)x\sw2$$
for all $x \in C, m \in M$. By applying $\varepsilon \ot \id$ to this equation it follows that $f$ is right $C$-linear.

(2) Let $C \in \rydBC$ as in Example \ref{exa:regularcorep}. Then $(\varepsilon \ot \id)\ga_C = \widetilde{\mu}_M$. Hence $\widetilde{\mu}_M$ is uniquely determined. Let $X=Y = C \in \rydBC$. By \eqref{objectcenter}
$$x\sw1 \ot y\sw1 \ot \widetilde{\mu}_M(m \ot x\sw2y\sw2) = x\sw1 \ot y\sw1 \ot \widetilde{\mu}_M(\widetilde{\mu}_M(m \ot x\sw2) \ot y\sw2)$$
for all $x,y \in C, m \in M$. By applying $\varepsilon \ot \varepsilon \ot \id$ it follows that $\widetilde{\mu}_M$ is associative. By \eqref{objectcenterI}, $\widetilde{\mu}_M$ is unitary.
We will write $mc = \widetilde{\mu}_M(m \ot c)$ for all $m \in M,c \in C$.

Since $\ga_C$ is right $C$-colinear,
$$x\sw1 \ot (m x\sw3)\sw0 \ot x\sw2 (m x\sw3)\sw1 = x\sw1 \ot m\sw0 x\sw2 \ot m\sw1 x\sw3$$
for all $x \in C, m \in M$. By applying $\varepsilon \ot \id$ it follows that $\ti{M} \in \rydC$.

Finally $\Psi(\ti{M}) = (M,\ga)$ by \eqref{comodule1} and \eqref{comodule2}.
\end{proof}

We fix an odd integer $l$, and assume that the antipodes of $B$ and $C$ are bijective.

Let $M \in \rydBC$  with $C$-comodule structure $\delta_M : M \to M \ot C, \;m \mapsto m\sw{0} \ot m\sw1$. We define an object $S_l(M) \in \ydBC$ by    $S_l(M) = M$ as a vector space with left
$B$-action and left $C$-coaction given by
\begin{align}
bm &= m \cS^{-l}(b),\label{leftright1}\\
\delta_{S_l(M)}(m) &= \cS^l(m\sw1) \ot m\sw0\label{leftright2}
\end{align}
for all $b \in B,m\in M$. For morphisms $f$ in $\rydBC$ we set $S_l(f) = f$.

Let $M \in \ydBC$ with comodule structure $\delta_M : M \to C \ot M, \; m \mapsto m\sw{-1} \ot m\sw0$. We define $S_l^{-1}(M) = M$ as a vector space with right $B$-action and right $C$-coaction given by
\begin{align}
mb &= \cS^l(b)m,\\
\delta_{S_l^{-1}(M)} (m) &= m \sw0 \ot \cS^{-l}(m\sw{-1})
\end{align}
for all $b \in B,m \in M$. For morphisms $f$ in $\ydBC$ we set $S_l^{-1}(f) = f$.

\begin{lemma}\label{lem:leftright}
Let $l$ be an odd integer, and assume that the antipodes of $B$ and $C$ are bijective.
\begin{enumerate}
\item The functor $S_l : \rydBC \to \ydBC$  mapping an object $M \in \rydBC$ onto $S_l(M)$, and a morphism $f$ onto $f$, is an isomorphism of categories with inverse $S_l^{-1}$.
\item Let $B=C=H$, and $\rho =\id_H$. Then $(S_l,\varphi) : \rydH \to \ydH$ is a braided monoidal isomorphism, where $\varphi$ is defined by
\begin{align*}
\varphi_{M,N} : S_l(M) \ot S_l(N) &\to S_l(M \ot N),\\
 m \ot n &\mapsto m \cS^{-1}(n\sw1) \ot n\sw0
= \cS^{-1}(n\sw{-1})m \ot n\sw0,
\end{align*}
for all $M,N \in \rydH$.

The inverse braided monoidal isomorphism is $(S_l^{-1},\psi) : \ydH \to \rydH$, where $\psi$ is defined by
\begin{align*}
\psi_{M,N} : S_l^{-1}(M) \ot S_l^{-1}(N) &\to S_l^{-1}(M \ot N),\\
m \ot n &\mapsto n\sw{-1} m \ot n\sw0
= m n\sw1 \ot n\sw0,
\end{align*}
for all $M,N \in \ydH$.
\end{enumerate}
\end{lemma}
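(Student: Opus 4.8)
The plan is to treat part (1) as two well-definedness checks plus a cancellation, and part (2) as a coherence verification built on top of the isomorphism from part (1). For part (1) the only nonformal point is that the twisted structures land in the correct category. Starting from $M \in \rydBC$ with right coaction $m \mapsto m\sw0 \ot m\sw1$, I would compute $\delta_{S_l(M)}(b \triangleright m) = \delta_{S_l(M)}(m\cS^{-l}(b))$ by substituting $\cS^{-l}(b)$ for $b$ in \eqref{rydBC}. Since $l$ is odd, $\cS^{-l}$ reverses coproducts, so the iterated coproduct is $\cS^{-l}(b\sw3)\ot\cS^{-l}(b\sw2)\ot\cS^{-l}(b\sw1)$; since $\rho$ is a Hopf map, $\rho\cS^{\pm l}=\cS^{\pm l}\rho$. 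After applying $\cS^l$ to the $C$-component (again anti-multiplicative as $l$ is odd) and using $\cS^l\cS^{-l}=\id$ and $\cS^l\cS^{1-l}=\cS$, the expression collapses to exactly $\rho(b\sw1)\cS^l(m\sw1)\rho\cS(b\sw3)\ot m\sw0\cS^{-l}(b\sw2)$, which is \eqref{relativeYD} for $S_l(M)$. An entirely analogous computation, starting from \eqref{relativeYD}, shows $S_l^{-1}(M)\in\rydBC$ for $M\in\ydBC$. That $S_l$ and $S_l^{-1}$ are mutually inverse on objects is pure cancellation of antipode powers: for instance $S_l^{-1}S_l(M)$ has action $m\cdot b=\cS^l(b)\triangleright m=m\cS^{-l}\cS^l(b)=mb$ and coaction $m\sw0\ot\cS^{-l}\cS^l(m\sw1)=m\sw0\ot m\sw1$. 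On morphisms both functors are the identity, and a right $B$-linear, right $C$-colinear map is automatically left $B$-linear and left $C$-colinear for the twisted structures, so the functors are well defined and inverse to each other.

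For part (2) I first set $B=C=H$, $\rho=\id_H$ and observe that then \eqref{relativeYD} and \eqref{rydBC} reduce to the defining axioms of $\ydH$ and $\rydH$, so $S_l:\rydH\to\ydH$ is the isomorphism of part (1). The two displayed formulas for $\varphi_{M,N}$ agree: in $S_l(N)$ the left coaction sends $n$ to $\cS^l(n\sw1)\ot n\sw0$ and in $S_l(M)$ the left action is $h\triangleright m=m\cS^{-l}(h)$, so $\cS^{-1}(n\sw{-1})\triangleright m=m\cS^{-l}\cS^{l-1}(n\sw1)=m\cS^{-1}(n\sw1)$. Next I would check directly that $\varphi_{M,N}$ is $H$-linear and $H$-colinear for the diagonal structures on $S_l(M)\ot S_l(N)$ and on $S_l(M\ot N)$, using $\Delta\cS^{-l}(h)=\cS^{-l}(h\sw2)\ot\cS^{-l}(h\sw1)$ together with the Yetter--Drinfeld axiom, and that it is natural in $M,N$ (immediate, since $S_l$ fixes morphisms and $f,g$ are linear/colinear). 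Its two-sided inverse is $p\ot q\mapsto p\,q\sw1\ot q\sw0$, as $\cS^{-1}(q\sw2)q\sw1=\varepsilon(q\sw1)$ shows.

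It then remains to verify the coherence axioms. The unit conditions $\varphi_{I,U}=\varphi_{U,I}=\id$ are immediate from the formula. For the associativity square \eqref{monoidal}, I would expand both composites on $m\ot n\ot p$ using the multiplicativity of the diagonal right coaction; the required identity reduces to coassociativity together with $\Delta$ being multiplicative on the $H$-factors. For the braiding square \eqref{braidedmonoidal} the cleanest route is explicit evaluation: starting from $m\ot n$, the composite $S_l(c_{M,N})\circ\varphi_{M,N}$ collapses via $\cS^{-1}(n\sw2)n\sw1=\varepsilon(n\sw1)$ to the plain flip $n\ot m$, and the composite $\varphi_{N,M}\circ c_{S_l(M),S_l(N)}$ collapses via $m\sw2\cS^{-1}(m\sw1)=\varepsilon(m\sw1)$ to the same flip, so the square commutes. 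Finally, since $(S_l,\varphi)$ is a braided monoidal isomorphism with inverse functor $S_l^{-1}$, the general formula \eqref{inversemonoidal} equips $(S_l^{-1},\psi)$ with the structure of a braided monoidal functor; a short computation identifies $\psi_{M,N}=S_l^{-1}(\varphi_{S_l^{-1}(M),S_l^{-1}(N)})^{-1}$ with the stated map $m\ot n\mapsto n\sw{-1}m\ot n\sw0=m\,n\sw1\ot n\sw0$, the two expressions coinciding by the same antipode bookkeeping as for $\varphi$.

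I expect the main obstacle to be the $H$-colinearity of $\varphi$ and the associativity coherence \eqref{monoidal}: these are precisely where three different occurrences of the antipode interact, namely the twist $\cS^{-1}$ built into $\varphi$, the hidden powers $\cS^{\pm l}$ inside the $S_l$-structures, and the antipode appearing in the diagonal coaction and in the Yetter--Drinfeld axiom. Keeping these straight requires systematic use of the fact that $\cS^{\pm l}$ reverse both products and coproducts because $l$ is odd, which is the single structural input that makes all the cancellations work.
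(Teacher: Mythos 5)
Your proof is correct and follows essentially the same route as the paper: part (1) is exactly the paper's computation, substituting $\cS^{-l}(b)$ into \eqref{rydBC}, using that $\cS^{\pm l}$ is anti-multiplicative and anti-comultiplicative for odd $l$ and commutes with $\rho$, and simplifying to \eqref{relativeYD}, with the inverse functor handled by cancellation of antipode powers. For part (2) the paper gives no computation at all but cites \cite[Proposition 2.2.1, 1.]{a-AndrGr99} (stated there for $l=-1$), so your direct verification of the coherence conditions --- in particular the correct observation that both composites in \eqref{braidedmonoidal} collapse to the plain flip $m \ot n \mapsto n \ot m$, and the identification of $\psi$ via \eqref{inversemonoidal} --- simply supplies the routine details the paper delegates, and does so uniformly for all odd $l$.
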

\begin{proof}
(1) Let $M \in \rydBC$. Then $S_l(M) \in \ydBC$ since for all $m \in M,b \in B$,
\begin{align*}
\delta_{S_l(M)}(bm) = \delta_{S_l(M)}(m \cS^{-l}(b)) &= \cS^l\left(\rho\cS\cS^{-l}(b\sw3) m\sw1 \rho\cS^{-l}(b\sw1)\right) \ot m\sw0\cS^{-l}(b\sw2)\\
&=\rho(b\sw1) \cS^l(m\sw1) \cS\rho(b\sw3) \ot b\sw2 m\sw0.
\end{align*}
Thus $S_l$ is a well-defined functor. Similarly it follows that $S_l^{-1}$ is a well-defined functor.

(2) is shown in \cite[Proposition 2.2.1, 1.]{a-AndrGr99} for $l= -1$.
\end{proof}

\begin{remar}
 In general, it is not clear whether the functor $S_l$ in Lemma \ref{lem:leftright} is monoidal.
 This is one of the reasons why in the proof of our braided monoidal isomorphism of left Yetter-Drinfeld modules given in Theorem~\ref{theor:third}
 we have to change sides starting  in Theorem \ref{theor:first} with a monoidal isomorphism between relative right and left Yetter-Drinfeld modules.
\end{remar}

\section{The first isomorphism}\label{sec:first}

\begin{defin}\label{defin:relative}
Let $R$ be a Hopf algebra in $\ydH$.

We denote by $\ydRH$ and $\rydRH$ the categories $\ydRH$ and $\rydRH$ in Definition \ref{defin:ydBC} and \ref{defin:rydBC} with respect to the inclusion $H \sub R \#H$ as the Hopf algebra map $\rho$.

We denote by $\uydRH$ the category $\uydRH$ in Definition \ref{defin:ydBC} where $\rho$ is the Hopf algebra projection $\pi : R\#H \to H$ of $R \#H$.

Assume that $(R, R^{\vee})$ together with $\langle\;,\; \rangle$ is a dual pair of Hopf algebras in $\ydH$ with bijective antipodes. Then the antipodes of $R \# H$ and of $R^{\vee} \# H$ are bijective by \eqref{bigS2} and \eqref{ydiso}. We denote by $\ydRveeHrat$ (respectively $\ydRveesmashrat$) the full subcategory of objects of $\ydRveeH$ (respectively of $\ydRveesmash$)  which are rational as $R^{\vee}$-modules by restriction. The full subcategories of $\ydRsmash$ (respectively of $\rydRsmash$) consisting of objects which are rational over $R$ will be denoted by $\ydRsmashrat$ (respectively $\rydRsmashrat$).
\end{defin}

\begin{lemma}\label{lem:Rmod}
Let $R$ be a Hopf algebra in $\ydH$, and let $_R(\ydH)$ be the category of left $R$-modules in the monoidal category $\ydH$.
\begin{enumerate}
\item Let $M \in \uydRH$. Define $V_1(M) = M$ as a vector space and as a left $H$- and a left $R$-module by restriction of the $R\#H$-module structure. Then $V_1(M) \in \ydH$ with the given $H$-comodule structure, and the multiplication map $ R \ot M \to M$ is a morphism in $\ydH$.
\item The functor
\begin{align*}
V_1 : \uydRH \to {_R(\ydH)}
\end{align*}
 mapping objects $M \in \ydH$ to $V_1(M)$ and morphisms $f$ to $f$, is an isomorphism of categories.  The inverse functor $V_1^{-1}$ maps an object $M \in {_R(\ydH)}$ onto the vector space $M$ with given $H$-comodule structure and $R\#H$-module structure $R \#H \ot M \xrightarrow{\id_R \ot \mu^H_M} R \ot M \xrightarrow{\mu^R_M} M$.
\end{enumerate}
\end{lemma}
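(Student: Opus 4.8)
The plan is to observe that neither $V_1$ nor $V_1^{-1}$ alters the underlying data: for $M\in\uydRH$ the vector space, the $H$-comodule structure $\delta$, and the $R$-module structure $\mu^R$ (got by restriction along $R\sub R\#H$) are literally those appearing in $V_1(M)$, and both functors act as the identity on morphisms. Consequently the whole lemma reduces to a single equivalence. Fix a vector space $M$ carrying an $H$-comodule structure $\delta$, an $H$-module structure $\mu^H$, and an $R$-module structure $\mu^R$. I would show that the following two packages of axioms coincide: (i) $M$ is a left $R\#H$-module via $\mu^R(\id_R\ot\mu^H)$ and its coaction satisfies the relative Yetter–Drinfeld condition \eqref{relativeYD} for $\rho=\pi$ (the defining condition of $\uydRH$ by Definition~\ref{defin:ydBC}); (ii) $(M,\mu^H,\delta)\in\ydH$ and $\mu^R:R\ot M\to M$ is a morphism in $\ydH$ (the defining condition of ${}_R(\ydH)$). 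Both parts of the lemma then read off immediately.

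The module half is the standard smash-product fact. By the multiplication rule \eqref{smashproduct}, the pair $(\mu^H,\mu^R)$ assembles into a single associative $R\#H$-action precisely when $\mu^R$ is $H$-linear for the diagonal action on $R\ot M$; concretely this is the relation $h(rm)=(h\sw1\lact r)(h\sw2 m)$, which is \eqref{smash1} read inside the $A$-module $M$. Thus the ``$M$ is an $R\#H$-module'' clause of (i) matches the ``$\mu^R$ is $H$-linear'' clause of (ii), in both directions.

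For the Yetter–Drinfeld half the key point is that the set of $a\in A=R\#H$ satisfying \eqref{relativeYD} for all $m\in M$ is a subalgebra of $A$: since $\pi$ is an algebra map, $\cS$ an anti-algebra map, and $\Delta$ an algebra map, the compatibilities for $a$ and for $b$ compose to the compatibility for $ab$ (and $1$ works trivially). As $A$ is generated as an algebra by $R$ and $H$, it suffices to verify \eqref{relativeYD} on $a\in H$ and on $a\in R$. For $a=h\in H$ one has $\Delta_A(h)=\Delta_H(h)$, $\pi|_H=\id$ and $\cS|_H=\cS_H$, so \eqref{relativeYD} becomes exactly the Yetter–Drinfeld condition \eqref{YD} for $(M,\mu^H,\delta)$. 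For $a=r\in R$ one uses that $R\sub A$ is a left coideal subalgebra, $\Delta(R)\sub A\ot R$, so by coassociativity $r\sw3\in R$ and hence $\pi\cS(r\sw3)=\varepsilon_R(r\sw3)1$; applying the counit collapses this factor, and \eqref{coaction} rewrites $\pi(r\sw1)\ot r\sw2$ as the $H$-coaction $r\sw{-1}\ot r\sw0$ of $R$. What survives is precisely $\delta(rm)=r\sw{-1}m\sw{-1}\ot r\sw0 m\sw0$, i.e.\ $H$-colinearity of $\mu^R$. This establishes (i)$\Leftrightarrow$(ii).

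Granting this equivalence, part (1) is the implication (i)$\Rightarrow$(ii), and part (2) is then formal: $V_1$ and $V_1^{-1}$ are mutually inverse bijections on objects because they leave the data untouched and only reorganize it, and they are the identity on morphisms, where $R\#H$-linearity restricts to $H$- and $R$-linearity while $H$-colinearity is retained, so morphisms correspond under both directions. I expect the only genuinely non-formal step to be the subalgebra reduction together with the $a\in R$ computation---in particular checking that $\pi\cS(r\sw3)$ disappears via the left-coideal property $\Delta(R)\sub A\ot R$; everything else is a rearrangement of the smash- and cosmash-product structure of $R\#H$ already recorded in Section~\ref{sec:bosonization}.
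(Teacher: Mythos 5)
Your proposal is correct and follows essentially the same route as the paper's proof: the smash-product multiplication rule identifies the $R\#H$-module axiom with $H$-linearity of $\mu^R_M$, and the observation that the elements $a\in R\#H$ satisfying \eqref{piyd} form a subalgebra reduces the relative Yetter--Drinfeld condition to $a\in H$ (giving \eqref{YD}) and $a\in R$ (giving $H$-colinearity of $\mu^R_M$ via $\Delta(R)\sub A\ot R$ and the collapse of $\pi\cS(a\sw3)$). The only difference is that you spell out the subalgebra verification and the counit collapse slightly more explicitly than the paper does, which is harmless.
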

\begin{proof}
It follows from the definition of the smash product that $M$ is a left $R \#H$-module if and only if $\mu^R_M$ is $H$-linear.

The set of all elements $a \in R \#H$ satisfying the following Yetter-Drinfeld condition
 \begin{align}
\delta^H(am) = \pi(a\sw1) m\sw{-1} \pi\cS(a\sw3) \ot a\sw2 m \sw0\label{piyd}
\end{align}
for all $m \in M$ and $a \in R\#H$, is a subalgebra of $R \#H$. Hence \eqref{piyd} holds for all $a \in R\#H$ and $m \in M$ if and only if \eqref{piyd} holds for all $m \in M$ and $a \in R\cup H$. Note that \eqref{piyd} for all $m \in M$ and $a \in H$ is the Yetter-Drinfeld condition of $\ydH$, and \eqref{piyd} for all $m \in M$ and $a \in R$ says that $\mu^R_M$ is $H$-colinear, since for all $a \in R$, $a\sw1 \ot a\sw2 \ot a\sw3 \in R\#H \ot R\#H \ot R$, hence
\begin{align*}
a\sw1 \ot a\sw2 \ot \pi\cS(a\sw3) = a\sw1 \ot a\sw2 \ot 1.
\end{align*}
This proves the Lemma.
\end{proof}
\begin{lemma}\label{lem:Rcomod}
Let $R$ be a Hopf algebra in $\ydH$, and let $^R(\ydH)$ be the category of left $R$-comodules in the monoidal category $\ydH$.
\begin{enumerate}
\item Let $M \in \ydRH$ with comodule structure $\delta_M : M \to R \#H \ot M$. Define $V_2(M) = M$ as a vector space with left $H$-comodule structure $\delta_M^H$ and left $R$-comodule structure $\delta_M^R$ given by
    \begin{align*}
    \delta_M^H = (\pi \ot \id_M)\delta_M,\;\delta_M^R = (\vartheta \ot \id_M)\delta_M.
     \end{align*}
     Then $V_2(M) \in \ydH$ with $H$-comodule structure $\delta_M^H$ and the given $H$-module structure, and $\delta_M^R : M \to R \ot M$ is a morphism in $\ydH$.
\item The functor
\begin{align*}
V_2 : \ydRH \to {^R(\ydH)}
\end{align*}
 mapping objects $M \in \ydH$ to $V_2(M)$ and morphisms $f$ to $f$, is an isomorphism of categories. The inverse functor $V_2^{-1}$ maps an object $M \in {^R(\ydH)}$ onto the vector space $M$ with given $H$-module structure and $R \#H$-comodule structure $M \xrightarrow{\delta^R_M}  R \ot M \xrightarrow{\id_R \ot \delta^H_M} R \# H \ot M$.
\end{enumerate}
\end{lemma}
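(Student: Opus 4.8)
The plan is to establish Lemma \ref{lem:Rcomod} as the comodule-theoretic dual of Lemma \ref{lem:Rmod}, the difference being that here the decomposition happens on the \emph{comodule} side: we split the $R\#H$-coaction through the two coalgebra projections $\pi : A \to H$ and $\vartheta : A \to R$, where $A = R\#H$. Throughout I write $\delta_M(m) = m\sw{-1} \ot m\sw0 \in A \ot M$ for the $A$-coaction of $M \in \ydRH$, and I record the candidate decomposed coactions as $\delta_M^R(m) = m\sws{-1} \ot m\sws0$ with $m\sws{-1} \in R$, and $\delta_M^H(m) = m\swe{-1} \ot m\swe0$ with $m\swe{-1} \in H$.

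For part~(1) I would first note that $\delta_M^H = (\pi \ot \id)\delta_M$ and $\delta_M^R = (\vartheta \ot \id)\delta_M$ are coassociative and counital coactions, since $\pi$ and $\vartheta$ are counital coalgebra maps (the latter by \eqref{comultvartheta}); here the coalgebra axioms for the braided Hopf algebra $R$ are the ordinary ones, so no braiding enters. The Yetter--Drinfeld condition \eqref{YD} for $V_2(M)$ is then obtained by applying $\pi \ot \id$ to the relative condition \eqref{relativeYD} (with $\rho$ the inclusion $H \sub A$) and using that $\pi$ is a Hopf algebra map with $\pi|_H = \id$ and $\pi\cS = \cS\pi$. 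Thus $V_2(M) \in \ydH$. It remains to see that $\delta_M^R : M \to R \ot M$ is a morphism in $\ydH$. For $H$-linearity I apply $\vartheta \ot \id$ to \eqref{relativeYD}: the trailing $\cS(h\sw3)$ is absorbed by \eqref{vartheta1}, and \eqref{vartheta2} turns $\vartheta(h\sw1 m\sw{-1})$ into $h\sw1 \lact \vartheta(m\sw{-1})$, giving $\delta_M^R(hm) = h\sw1 \lact m\sws{-1} \ot h\sw2 m\sws0$, the diagonal action.

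For $H$-colinearity of $\delta_M^R$ I would avoid a direct calculation and argue structurally. For \emph{any} $A$-comodule, the coaction $\delta_M$ is automatically $H$-colinear with respect to the coadjoint coaction $a \mapsto \pi(a\sw1 \cS(a\sw3)) \ot a\sw2$ on $A$ and $\delta_M^H$ on $M$; this is a short consequence of coassociativity together with the antipode and counit axioms and $\pi$ being a bialgebra map, and it uses nothing about the relative condition (the key simplification being $\cS\pi(m\sw{-2})\pi(m\sw{-1}) = \varepsilon(\cdot)\,1$). Since $\vartheta$ is $H$-colinear from this coadjoint coaction to the Yetter--Drinfeld coaction of $R$ by \eqref{coactionvartheta}, the composite $\delta_M^R = (\vartheta \ot \id)\delta_M$ is $H$-colinear as a composite of colinear maps. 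This gives $V_2(M) \in {^R(\ydH)}$. The functor $V_2^{-1}$ reconstructs the $A$-coaction from $\delta_M^R(m) = m\sws{-1} \ot m\sws0$ and $\delta_M^H$ by $\delta_M(m) = m\sws{-1}(m\sws0)\swe{-1} \ot (m\sws0)\swe0$, i.e.\ the composite $M \xrightarrow{\delta_M^R} R \ot M \xrightarrow{\id_R \ot \delta_M^H} A \ot M$. That $V_2$ and $V_2^{-1}$ are mutually inverse is then formal: one direction uses the defining formulas $\vartheta(r\#h) = r\varepsilon(h)$ of \eqref{eq:defvartheta} and $\pi(r\#h) = \varepsilon_R(r)h$ together with counitality, the other uses the identity $a = \vartheta(a\sw1)\pi(a\sw2)$ of \eqref{decomposition} applied to $a = m\sw{-1}$. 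Both functors are the identity on morphisms, and a map is $A$-colinear if and only if it is both $R$- and $H$-colinear for the decomposed structures (apply $\vartheta$ and $\pi$, respectively), so they do restrict to the correct morphism sets.

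The main obstacle is the well-definedness of $V_2^{-1}$, namely that the reconstructed $\delta_M$ genuinely lands in $\ydRH$. Coassociativity and counitality of the reconstructed coaction must be read off from the cosmash coproduct formula \eqref{cosmash}, and the crucial input there is precisely the $H$-colinearity of $\delta_M^R$ (which is part of $M \in {^R(\ydH)}$): it is exactly this colinearity that makes the cross terms in $\Delta_A(m\sws{-1}(m\sws0)\swe{-1})$ match the iterate of $\delta_M$. The relative Yetter--Drinfeld condition \eqref{relativeYD} for $V_2^{-1}(M)$ is then verified by running part~(1) in reverse: expanding $\delta_M(hm)$ via the $H$-linearity of $\delta_M^R$ and the $\ydH$ Yetter--Drinfeld condition \eqref{YD}, and using the identity $(h\sw1 \lact m\sws{-1})\,h\sw2 = h\,m\sws{-1}$ in $A$ from \eqref{smash1}, reassembles exactly $h\sw1 m\sw{-1}\cS(h\sw3) \ot h\sw2 m\sw0$. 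I expect this coassociativity-through-the-cosmash check to be the only genuinely laborious step; everything else is dictated by the coalgebra isomorphism $A \cong R \ot H$ and the formulas for $\vartheta$ and $\pi$ collected in Section~\ref{sec:bosonization}.
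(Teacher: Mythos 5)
Your proposal is correct and is essentially the paper's own argument: the paper disposes of this lemma with the single line that it is shown similarly to the proof of Lemma~\ref{lem:Rmod}, and your writeup is exactly that dualization — the cosmash coproduct \eqref{cosmash} makes an $R\#H$-coaction equivalent to an $H$-colinear $R$-coaction, applying $\pi \ot \id$ and $\vartheta \ot \id$ to the relative Yetter--Drinfeld condition splits it into the $\ydH$-condition for $\delta_M^H$ plus $H$-linearity of $\delta_M^R$ (via \eqref{vartheta1}, \eqref{vartheta2}), and \eqref{decomposition} together with counitality gives that $V_2$ and $V_2^{-1}$ are mutually inverse. Your structural derivation of the $H$-colinearity of $\delta_M^R$ — the automatic coadjoint colinearity of $\delta_M$ composed with the colinearity of $\vartheta$ from \eqref{coactionvartheta} — is a slightly more explicit route to the step the paper subsumes under ``by definition of the cosmash product,'' but it uses only facts recorded in Section~\ref{sec:bosonization} and does not constitute a different method.
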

\begin{proof}
This is shown similarly to the proof of Lemma \ref{lem:Rmod}.
\end{proof}

For later use we note a formula for the right $R \#H$-comodule structure of a left $R \#H$-comodule defined via $\cS^{-1}$.

\begin{lemma}\label{lem:reformulation}
Let $R$ be a Hopf algebra in $\ydH$ with bijective antipode, $M$ a left $H$-comodule with $H$-coaction
$\delta^H : M \to H \ot M$, $m \mapsto m\sw{-1} \ot m\sw0$,
and
\begin{align*}
\delta^R : M &\to R \ot M,\; m \mapsto m\sws{-1} \ot m\sws0
\end{align*}
a linear map. Define $\delta : M \to R \#H \ot M, \; m \mapsto m\swe{-1} \ot m\swe0$,
by $\delta = (\id \ot \delta^H)\delta^R$. Then
\begin{align}
\vartheta\cS^{-1}(m\swe{-1}) \ot m\swe0 = \cS_R^{-1}\left(\cS^{-1}({m\sws0}\sw{-1}) \lact m\sws{-1}\right) \ot {m\sws0}\sw0
\end{align}
for all $m \in M$.
\end{lemma}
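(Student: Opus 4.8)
The plan is to unwind the definition of $\delta$ so that $m\swe{-1}$ is written as an element of the form $rh$ with $r\in R$ and $h\in H$, and then to apply the explicit formula for $\cS^{-1}$ on $R\#H$ from \eqref{bigSinvers}, followed by the multiplicative properties \eqref{vartheta1} and \eqref{vartheta2} of $\vartheta$.

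First I would make the source of each Sweedler leg explicit. By definition $\delta = (\id \ot \delta^H)\delta^R$, so under the identification $R \ot H = R \# H$ we have
\begin{align*}
m\swe{-1} \ot m\swe0 = m\sws{-1}\,{m\sws0}\sw{-1} \ot {m\sws0}\sw0,
\end{align*}
where $m\sws{-1} \in R$, ${m\sws0}\sw{-1} \in H$, and the product $m\sws{-1}\,{m\sws0}\sw{-1}$ is simply $m\sws{-1} \# {m\sws0}\sw{-1}$. Thus $m\swe{-1}$ has the shape $rh$ with $r = m\sws{-1}$ and $h = {m\sws0}\sw{-1}$, and \eqref{bigSinvers} applies. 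Here one must keep in mind that $r = m\sws{-1}$ carries its own $H$-coaction ${m\sws{-1}}\sw{-1} \ot {m\sws{-1}}\sw0$ as an object of $\ydH$, which is exactly what is produced when \eqref{bigSinvers} is invoked.

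Applying \eqref{bigSinvers} then gives
\begin{align*}
\cS^{-1}(m\swe{-1}) = \cS^{-1}({m\sws0}\sw{-1})\,\cS_R^{-1}({m\sws{-1}}\sw0)\,\cS^{-1}({m\sws{-1}}\sw{-1}),
\end{align*}
an element of the form $h' r' h''$ with $h',h''\in H$ and $r'\in R$. Now I would apply $\vartheta$. The rightmost factor $\cS^{-1}({m\sws{-1}}\sw{-1}) \in H$ is absorbed by \eqref{vartheta1} into a counit $\varepsilon({m\sws{-1}}\sw{-1})$, which collapses the coaction of $m\sws{-1}$ so that $\varepsilon({m\sws{-1}}\sw{-1})\,\cS_R^{-1}({m\sws{-1}}\sw0) = \cS_R^{-1}(m\sws{-1})$. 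The leftmost factor $\cS^{-1}({m\sws0}\sw{-1}) \in H$ is pulled out as an $H$-action by \eqref{vartheta2}, and since $\vartheta$ restricts to the identity on $R \subseteq R \# H$ one is left with
\begin{align*}
\vartheta\cS^{-1}(m\swe{-1}) = \cS^{-1}({m\sws0}\sw{-1}) \lact \cS_R^{-1}(m\sws{-1}).
\end{align*}

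Finally, since $\cS_R$ is a morphism in $\ydH$ it is $H$-linear, and hence so is $\cS_R^{-1}$; moving the action inside yields $\cS_R^{-1}\bigl(\cS^{-1}({m\sws0}\sw{-1}) \lact m\sws{-1}\bigr)$, and tensoring with $m\swe0 = {m\sws0}\sw0$ produces the asserted identity. I expect no genuine obstacle: the argument is a direct computation, and the only delicate point is the bookkeeping of the two distinct coactions—the $R$-coaction of $M$ producing $m\sws{-1}$, and the intrinsic $H$-coaction of $m\sws{-1}\in R$—so that \eqref{bigSinvers} is matched to the correct legs before $\vartheta$ is applied.
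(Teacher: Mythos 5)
Your proof is correct: every step checks out, including the two points that actually need care — the legwise application of \eqref{bigSinvers} (legitimate since both sides of that formula are bilinear in $r \ot h$, so it applies to the tensor $m\sws{-1} \ot {m\sws0}\sw{-1}$), and the $H$-linearity of $\cS_R^{-1}$, which holds because $\cS_R$ is a bijective morphism in $\ydH$. Your route is genuinely, if mildly, different from the paper's. You compute forward from the left-hand side: expand $\cS^{-1}(m\swe{-1})$ via the explicit inversion formula \eqref{bigSinvers}, absorb the outer $H$-legs using \eqref{vartheta1} and \eqref{vartheta2} together with $\vartheta\res R = \id$, and finish by pulling the $H$-action inside $\cS_R^{-1}$. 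The paper instead computes backward from the right-hand side: it rewrites the action $\cS^{-1}({m\sws0}\sw{-1}) \lact m\sws{-1}$ as a product in $R\#H$, namely $\cS^{-1}({m\sws0}\sw{-1})\, m\sws{-1}\, {m\sws0}\sw{-2}$ (the conjugation form of \eqref{action}), reassembles the iterated coaction legs of $m\sws0$ into legs of $m\swe0$ so that this product becomes $\cS^{-1}({m\swe0}\sw{-1})\, m\swe{-1}$, then applies the identity $\cS_R^{-1} = \vartheta\cS^{-1}$ on $R$ from \eqref{ruleS5} and kills the remaining $H$-leg with \eqref{vartheta1}. The two arguments are two bookkeepings of the same fact — indeed \eqref{bigSinvers} is itself a consequence of \eqref{ruleS5} and the anti-multiplicativity of $\cS^{-1}$ — so neither is deeper. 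Yours has the small advantage of avoiding the re-indexing step that converts legs of $m\sws0$ into legs of $m\swe0$ (the one slightly slippery move in the paper's version), at the cost of invoking \eqref{vartheta2} and the $H$-linearity of $\cS_R^{-1}$, neither of which the paper's proof needs.
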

\begin{proof}
Let $m \in M$. Then $m\swe{-1} \ot m\swe0 = m\sws{-1} {m\sws0}\sw{-1} \ot {m\sws0}\sw0$, and
\begin{align*}
\cS^{-1}({m\sws0}\sw{-1}) \lact m\sws{-1} \ot {m\sws0}\sw0&= \cS^{-1}({m\sws0}\sw{-1}) m\sws{-1} {m\sws0}\sw{-2} \ot  {m\sws0}\sw0\\
&= \cS^{-1}({m\swe0}\sw{-1}) m\swe{-1} \ot {m\swe0}\sw0.
\end{align*}
Hence
\begin{align*}
&\cS_R^{-1}\left(\cS^{-1}({m\sws0}\sw{-1}) \lact m\sws{-1}\right) \ot {m\sws0}\sw0 &&\\
&\phantom{aa.}=\cS_R^{-1}\left(\cS^{-1}({m\swe0}\sw{-1})  m\swe{-1}\right) \ot {m\swe0}\sw0&&\\
&\phantom{aa.}= \vartheta \cS^{-1}\left(\cS^{-1}({m\swe0}\sw{-1})  m\swe{-1}\right) \ot {m\swe0}\sw0 &\text{ (by } \eqref{ruleS5})&\\
&\phantom{aa.}= \vartheta \cS^{-1}(m\swe{-1}) \ot {m\swe0}. &\text{ (by } \eqref{vartheta1})&
\end{align*}
\end{proof}

\begin{theor}\label{theor:first}
Let $(R, R^{\vee})$ be a dual pair of Hopf algebras in $\ydH$ with bijective antipodes and with bilinear form $\langle\;,\; \rangle$.

A monoidal isomorphism
$$(F,\varphi) : \rydRH \to \ydRveeHrat$$
is defined as follows.

For any object $M \in \rydRH$ with right $R \# H$-comodule structure denoted by
$$\delta_M: M \to M \ot R \# H,\; m \mapsto m\swe0 \ot m\swe1,$$
let $F(M)=M$ as a vector space and $F(M) \in \ydRveeH$
with left $H$-action, $H$-coaction $\delta_{F(M)}^H$ and $R^{\vee}$-action, respectively, given by
\begin{align}
hm &= m \cS^{-1}(h),\label{first1}\\
\delta_{F(M)}^H(m) &= \pi\cS(m\swe1) \ot m\swe0,\label{first2}\\
\xi m &= \langle \xi,\vartheta\cS(m\swe1)\rangle m\swe0\label{first3}
\end{align}
for all $h \in H,m \in M, \xi \in R^{\vee}$. For any morphism $f$ in $\rydRH$ let $F(f)=f$.
The natural transformation $\varphi$ is defined by
\begin{align}
\varphi_{M,N} : F(M) \ot F(N) \to F(M \ot N),\; m \ot n \mapsto m \pi\cS^{-1}(n\swe1) \ot n\swe0,
\end{align}
for all $M,N \in \rydRH$.
\end{theor}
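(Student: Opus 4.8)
The plan is to build $(F,\varphi)$ as a composite of isomorphisms already established, which at once explains the formulas \eqref{first1}--\eqref{first3} and isolates the one nontrivial point. Take $l=1$ and apply Lemma \ref{lem:leftright}(1) with $B=H$, $C=R\#H$ and $\rho$ the inclusion: the side-switching functor $S_1:\rydRH\to\ydRH$ sends $M$ with $\delta_M(m)=m\swe0\ot m\swe1$ to $M$ with left $H$-action $hm=m\cS^{-1}(h)$ and left $R\#H$-coaction $m\mapsto\cS(m\swe1)\ot m\swe0$, by \eqref{leftright1}--\eqref{leftright2}. Following this with $V_2:\ydRH\to{^R(\ydH)}$ (Lemma \ref{lem:Rcomod}), which records $\delta^H=(\pi\ot\id)\delta$ and $\delta^R=(\vartheta\ot\id)\delta$, then with the duality $D:{^R(\ydH)}\to{_{R^{\vee}}(\ydH)_{\rat}}$ of Proposition \ref{propo:rational}, and finally with the inverse of $V_1^{R^{\vee}}:\ydRveeHrat\to{_{R^{\vee}}(\ydH)_{\rat}}$ (Lemma \ref{lem:Rmod} applied to $R^{\vee}$), reproduces exactly \eqref{first1}, the $H$-coaction $\delta^H(m)=\pi\cS(m\swe1)\ot m\swe0$ of \eqref{first2}, the $R$-coaction $\vartheta\cS(m\swe1)\ot m\swe0$, and hence the $R^{\vee}$-action $\xi m=\langle\xi,\vartheta\cS(m\swe1)\rangle m\swe0$ of \eqref{first3}. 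As a composite of category isomorphisms, $F=(V_1^{R^{\vee}})^{-1}\,D\,V_2\,S_1$ is an isomorphism of categories with values in the rational subcategory, with an explicit inverse obtained by reversing the chain.

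It remains to supply the monoidal structure. I would check that $V_2$, $D$ and $V_1^{R^{\vee}}$ are \emph{strict} monoidal: all tensor products in sight are diagonal, and the identifications $\delta^R=(\vartheta\ot\id)\delta$ and $\xi m=\langle\xi,m\sws{-1}\rangle m\sws0$ convert the cosmash coproduct into the braided coproducts of $R$ and $R^{\vee}$ via \eqref{comult1}, \eqref{comultvartheta} and the duality axioms \eqref{pair4}--\eqref{pair5}. Hence a monoidal datum for $F$ is the same as one for the side-switch $S_1$, and the claimed map $\varphi_{M,N}(m\ot n)=m\,\pi\cS^{-1}(n\swe1)\ot n\swe0$ is exactly that datum; this is also the conceptual reason a nontrivial $\varphi$ is unavoidable, since $S_1$ need not be monoidal in the relative setting (cf.\ the remark after Lemma \ref{lem:leftright}) and $\pi$ is needed to land the coaction back in $H$. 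Its inverse is $m\ot n\mapsto m\,\pi(n\swe1)\ot n\swe0$, as one sees from the antipode identities $\sum\cS^{-1}(c\sw2)c\sw1=\sum c\sw2\cS^{-1}(c\sw1)=\varepsilon(c)1$ in $R\#H$ together with counitarity; naturality is immediate as $F$ and $\varphi$ are given by universal formulas; and the unit constraints hold because $\varepsilon\pi\cS^{-1}=\varepsilon$, whence $\varphi_{\fie,U}(1\ot u)=\varepsilon(u\swe1)\,1\ot u\swe0=1\ot u$ and symmetrically $\varphi_{U,\fie}=\id$.

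The core of the proof is to verify that $\varphi_{M,N}$ is a morphism in $\ydRveeH$, i.e.\ left $R^{\vee}\#H$-linear and left $H$-colinear, and that the associativity diagram \eqref{monoidal} commutes. The $H$-linearity and $H$-colinearity I would obtain by expanding the diagonal $H$-structures on $F(M)\ot F(N)$ through \eqref{first1}--\eqref{first2}, rewriting the right $R\#H$-coaction of $M\ot N\in\rydRH$ with the relative condition \eqref{rydBC} and the smash formulas \eqref{smash1}--\eqref{smash2}, \eqref{cosmash}, and using that $\pi$ is a Hopf algebra map with $\pi\cS=\cS\pi$. The hard part will be the $R^{\vee}$-linearity: on $F(M)\ot F(N)$ the $R^{\vee}$-action is the diagonal one built from $\Delta_{R^{\vee}}$ and the braiding of $\ydH$, whereas on $F(M\ot N)$ it is read off from $\langle\xi,\vartheta\cS(m\swe1 n\swe1)\rangle$, and matching the two across the twist by $\pi\cS^{-1}(n\swe1)$ is precisely what Lemma \ref{lem:vartheta3} (splitting $\vartheta\cS$ of a product) together with the pairing axioms \eqref{pair4}--\eqref{pair5} is designed to accomplish, Lemma \ref{lem:reformulation} serving to pass between the $R\#H$- and the $(R,H)$-comodule pictures. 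The coherence \eqref{monoidal} should then unwind from coassociativity of $\delta_M$ and multiplicativity of $\pi\cS^{-1}$, so that once $R^{\vee}$-linearity is in hand the remaining verifications are formal.
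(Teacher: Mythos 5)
Your proposal follows the paper's proof essentially verbatim: the paper defines $F$ as exactly the same composite $\rydRH \xrightarrow{S_1} \ydRH \xrightarrow{V_2} {^R(\ydH)} \xrightarrow{D} {_{R^{\vee}}(\ydH)_{\rat}} \xrightarrow{V_1^{-1}} \ydRveeHrat$ of Lemma \ref{lem:leftright}, Lemma \ref{lem:Rcomod}, Proposition \ref{propo:rational} and Lemma \ref{lem:Rmod}, records the same inverse $\varphi^{-1}(m \ot n) = m\,\pi(n\swe1) \ot n\swe0$, and then, just as you anticipate, spends its effort verifying that $\varphi_{M,N}$ is $H$-linear, $H$-colinear and (the crux) $R^{\vee}$-linear, with the $R^{\vee}$-linearity computation driven precisely by Lemma \ref{lem:vartheta3} and the pairing axioms, augmented by the auxiliary identity \eqref{NR} obtained from \eqref{coactionvartheta} and \eqref{pair3}. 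Two minor remarks: the strict monoidality of $V_2$, $D$, $V_1^{-1}$ that you invoke to transport the datum to $S_1$ is true but is neither proved nor needed in the paper (which checks the morphism property of $\varphi$ directly in $\ydRveeHrat$, as you in fact also end up proposing), and Lemma \ref{lem:reformulation} plays no role in this theorem --- it is used only later, in the proof of Theorem \ref{theor:second}/Theorem \ref{theor:third}.
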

\begin{proof}
The functor $F$ is the composition of the isomorphisms
\begin{align*}
\rydRH \xrightarrow{S} \ydRH \xrightarrow{V_2} {^R(\ydH)} \xrightarrow{D} {_{R^{\vee}}(\ydH)_{\rat}} \xrightarrow{V_1^{-1}} \ydRveeHrat,
\end{align*}
where $S=S_1$ is the isomorphism of Lemma \ref{lem:leftright}, $V_2$ is the isomorphism of Lemma \ref{lem:Rcomod}, $D$ is the isomorphism of Proposition \ref{propo:rational}, and where the last isomorphism is the restriction of $V_1^{-1}$ for $R^{\vee}$ of Lemma \ref{lem:Rmod} to rational objects.

 Let $M,N \in \rydRH$. The map
\begin{align*}
\varphi  = \varphi_{M,N} : F(M) \ot F(N) \to F(M \ot N)
\end{align*}
is a linear isomorphism  with $\varphi^{-1}(m \ot n)= m\pi(n\swe1) \ot n \swe0$ for all $m \in M,n\in N$. It follows from the Yetter-Drinfeld condition \eqref{rydBC} that $\varphi$ is an $H$-linear and $H$-colinear map, since for all $m \in M,n\in N$ and $h \in H$,
\begin{align*}
\varphi(h(m \ot n)) &= \varphi(m\cS^{-1}(h\sw1) \ot n \cS^{-1}(h\sw2))\\
&= m \cS^{-1}(h\sw1) \pi\cS^{-1}(h\sw4 n\swe1 \cS^{-1}(h\sw2)) \ot n\swe0 \cS^{-1}(h\sw3)\\
&=m\cS^{-1}(h\sw1) \cS^{-2}(h\sw2) \pi\cS^{-1}(n\swe1) \cS^{-1}(h\sw4) \ot n\swe0 \cS^{-1}(h\sw3)\\
&= h \varphi(m \ot n),
\end{align*}
\begin{align*}
\delta _{F(M\ot N)}^H\varphi(m \ot n) &= \pi\cS(\pi(n\swe4) m\swe1 \pi\cS^{-1}(n\swe2) n\swe1) \ot m\swe0 \pi\cS^{-1}(n\swe3) \ot n\swe0\\
&= \pi\cS(n\swe2 m\swe1) \ot m\swe0 \pi\cS^{-1}(n\swe1) \ot n\swe0\\
&=(\id_H \ot \varphi)\delta_{F(M) \ot F(N)}^H(m \ot n).
\end{align*}
To prove that $\varphi$
 is a left $R^{\vee}$-linear map, let $\xi \in R^{\vee}, m \in M$ and $n \in N$.
We first show that
\begin{align}\label{NR}
\xi\sw{-2} \ot \xi\sw{-1} \langle \xi\sw0,\vartheta\cS(a)\rangle = \pi( \cS(a\sw2) a\sw4) \ot \pi(\cS(a\sw1) a\sw5) \langle\xi,\vartheta\cS(a\sw3)\rangle
\end{align}
for all $a \in R \# H$.

By \eqref{coactionvartheta},
\begin{align*}
(\vartheta\cS(a))\sw{-2} \ot (\vartheta\cS(a))\sw{-1} &\ot (\vartheta\cS(a))\sw{0}\\
 &= \Delta(\pi(S(a\sw3) \cS^2(a\sw1)) \ot \vartheta\cS(a\sw2)\\
&=\pi(\cS(a\sw5) \cS^2(a\sw1)) \ot \pi(\cS(a\sw4) \cS^2(a\sw2)) \ot \vartheta\cS(a\sw3).
\end{align*}
Hence \eqref{NR} follows from \eqref{pair3}.

Then
\begin{align*}
\varphi(\xi(m \ot n)) &= \varphi(\xi\sw1m \ot \xi\sw2 n)&&\\
&= \varphi(\xi\swo1 \xi\swo2\sw{-1}m \ot \xi\swo2\sw0 n)&&\\
&= \varphi\left(\xi\swo1 \left(m\cS^{-1}(\xi\swo2\sw{-1})\right) \ot \xi\swo2\sw0 n\right)&&\\
&= \varphi\Big(\Big\langle \xi\swo1,\vartheta\cS\left(\xi\swo2\sw{-1} m\swe1 \cS^{-1}(\xi\swo2\sw{-3})\right) \Big\rangle m\swe0 \cS^{-1}(\xi\swo2\sw{-2}) &&\\
 &\phantom{aa}\ot \langle \xi\swo2\sw0,\vartheta\cS(n\swe1)\rangle n\swe0\Big)& &\\
&= \varphi(m\swe0 \cS^{-1}(\xi\swo2\sw{-1}) \ot n \swe0) &(\text{by } \eqref{vartheta1})&\\
&\phantom{aa}\times \Big\langle \xi\swo1,\vartheta\left(\xi\swo2\sw{-2} \cS(m\swe1)\right)\Big\rangle \langle\xi\swo2\sw{0},\vartheta\cS(n\swe1) \rangle.&&\\
\end{align*}
Hence by \eqref{NR} we obtain
\begin{align*}
\varphi(\xi(m \ot n)) &= \varphi(m\swe0 \cS^{-1}(\pi(\cS(n\swe1) n\swe5)) \ot n \swe0) & &\\
&\phantom{aa}\times \langle \xi\swo1,\vartheta\left(\pi( \cS(n\swe2) n\swe4) \cS(m\swe1)\right)\rangle \langle\xi\swo2,\vartheta\cS(n\swe3) \rangle &&\\
&=m\swe0 \pi(\cS^{-1}(n\swe6)n\swe2) \pi\cS^{-1}(n\swe1) \ot n\swe0 &(\text{by }\eqref{pair4})&\\
&\phantom{aa}\times \langle \xi, \vartheta\cS(n\swe4) \vartheta\left(\pi(\cS(n\swe3) n\swe5) \cS(m\swe1)\right)\rangle&&\\
&=m\swe0 \pi\cS^{-1}(n\swe4) \ot n\swe0 &&\\
&\phantom{aa}\times \Big\langle\xi,\vartheta\cS(n\swe2) \vartheta\left(\pi(\cS(n\swe1) n\swe3) \cS(m\swe1)\right) \Big\rangle&&\\
&=m\swe0 \pi\cS^{-1}(n\swe3) \ot n\swe0 \langle\xi,\vartheta\cS(m\swe1 \pi \cS^{-1}(n\swe2) n\swe1) \rangle,  &
\end{align*}
where the last equality follows from Lemma \ref{lem:vartheta3} and from \eqref{vartheta2}.

On the other hand
\begin{align*}
\xi \varphi(m \ot n) &= \xi (m \pi\cS^{-1}(n\swe1) \ot n\swe0)&&\\
&= \langle \xi,\vartheta\cS(\pi(n\swe4) m\swe1 \pi\cS^{-1}(n\swe2) n\swe1) \rangle m\swe0\pi\cS^{-1}(n\swe3) \ot n\swe0 &&\\
&= m\swe0 \pi\cS^{-1}(n\swe3) \ot n\swe0 \langle\xi, \vartheta\cS(m\swe1 \pi\cS^{-1}(n\swe2) n\swe1) \rangle. &(\text{by } \eqref{vartheta1})&
\end{align*}
Hence $\varphi(\xi(m \ot n)) = \xi \varphi(m \ot n)$.

It is easy to check that the diagrams \eqref{monoidal} commute for $(F,\varphi)$. Hence $(F,\ph)$ is a monoidal functor.
\end{proof}

\section{The second isomorphism}\label{sec:second}

In this section we assume that $(R, R^{\vee})$ is a dual pair of Hopf algebras in $\ydH$ with bijective antipodes and bilinear form $\langle\;,\; \rangle$. The monoidal isomorphism
$(F,\varphi) : \rydRH \to \ydRveeHrat$
of Theorem \ref{theor:first} induces by Lemma \ref{lem:center} a braided monoidal isomorphism between the centers
$$(F^{\cZ},{\varphi}^{\cZ}) : \cZ(\rydRH) \to \cZ(\ydRveeHrat).$$
Assume that $\ydRveeHrat$ is $R^{\vee} \#H$-faithful. By Propositions \ref{propo:comodulecenter} and \ref{propo:modulecenter}, the functors
\begin{align*}
\Psi : \rydRsmashrat &\to \cZ(\rydRH),\\
\Phi : \ydRveesmashrat &\to \cZ(\ydRveeHrat)
\end{align*}
are fully faithful, strict monoidal and braided. The functor $\Psi$ is defined
with respect to the Hopf algebra inclusion $\iota : H \to R \#H$.
We denote the image of $M \in \rydRsmashrat$ in $\rydRH$ defined
by restriction by $M_{\restr}$. The functor $\Phi$ is defined
with respect to the Hopf algebra projection $\pi : R^\vee \#H \to H$,
and we denote the image of $M \in \ydRveesmashrat$ in $\ydRveeHrat$
by ${}^{\pi}M$.

Our goal is to show in Theorem \ref{theor:second} that $(F,\varphi)$ induces a braided monoidal isomorphism
\begin{align*}
\rydRsmashrat \to \ydRveesmashrat.
\end{align*}
Let $G : \ydRveeHrat \to \rydRH$ be the inverse functor of the isomorphism $F$ of Theorem \ref{theor:first}. Then
$(G,\psi) : \ydRveeHrat \to \rydRH$ is a monoidal isomorphism, where $\psi$ is defined by \eqref{inversemonoidal}.
We first construct functors
$$\widetilde{F} : \rydRsmashrat \to \ydRveesmashrat, \;\widetilde{G} : \ydRveesmashrat \to  \rydRsmashrat$$ such that the diagrams
\begin{align}\label{diagramsecondF}
\begin{CD}
\rydRsmashrat  @>{\widetilde{F}}>>  \ydRveesmashrat\\
@V{\Psi}VV    @V{\Phi}VV\\
\cZ(\rydRH) @>{F^{\cZ}}>>  \cZ(\ydRveeHrat)
\end{CD}
\intertext{and}
\begin{CD}
\ydRveesmashrat  @>{\widetilde{G}}>>  \rydRsmashrat\\
@V{\Phi}VV    @V{\Psi}VV\\
\cZ(\ydRveeHrat) @>{G^{\cZ}}>>  \cZ(\rydRH)
\end{CD}\label{diagramsecondG}
\end{align}
commute.

The existence of $\widetilde{F}$ will follow from the next two lemmas.

\begin{lemma}\label{lem:second1}
Let $(F^{\cZ},{\varphi}^{\cZ}) : \cZ(\rydRH) \to \cZ(\ydRveeHrat)$ be the monoidal isomorphism induced by the isomorphism $(F,\varphi)$ of Theorem \ref{theor:first}. Let $M \in \rydRsmashrat$, and $\Psi(M)= (M_{\restr},\ga)$, where $\ga = c_M$ is defined in \eqref{defin:Psi}. Then
$$F^{\cZ} \Psi(M)= (F(M_{\restr}), \ti{\ga}),$$
and $\widetilde{\gamma}_{F(X)} : F(M_{\restr}) \ot F(X) \to F(X) \ot F(M_{\restr})$ is given by
\begin{align}
\ti{\ga}_{F(X)}(m \ot x) = x\swe0 \pi\left(\cS(x\swe1) x\swe4 m\swe1\right) \ot m\swe0 \pi\cS^{-1}(x\swe3) x\swe2\label{computeZ1}
\end{align}
for all $X \in \rydRH, x \in X$ and $m \in M$.
\end{lemma}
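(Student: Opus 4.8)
**The plan is to compute $\ti{\ga}_{F(X)}$ directly from the defining commutative diagram \eqref{defin:tilde} for $F^{\cZ}$.** The diagram says that $\ti{\ga}_{F(X)}$ is the unique map making
\begin{align*}
\ti{\ga}_{F(X)} = \varphi_{X,M_{\restr}} \circ F(\ga_X) \circ \varphi_{M_{\restr},X}^{-1}
\end{align*}
hold, where $\ga = c_M$ is the braiding of the center object $\Psi(M)$ from \eqref{defin:Psi}. So I would simply trace an element $m \ot x$ through the three maps in sequence and simplify. Since $F$ acts as the identity on underlying vector spaces and on morphisms, $F(\ga_X)$ is literally the map $c_{M,X}$ from \eqref{defin:Psi}, namely $m \ot x \mapsto x\swe0 \ot m x\swe1$ where the right $R\#H$-comodule structure of $X$ (viewed in $\rydRH$) is $\delta_X(x) = x\swe0 \ot x\swe1$.

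\emph{First} I would apply $\varphi^{-1}$. From the proof of Theorem \ref{theor:first} we have the explicit inverse $\varphi_{M,N}^{-1}(m \ot n) = m\pi(n\swe1) \ot n\swe0$, so $\varphi_{M_{\restr},X}^{-1}(m \ot x) = m\pi(x\swe1) \ot x\swe0$. \emph{Next} I would apply $F(\ga_X) = c_{M,X}$ to the result, using the right-comodule Yetter--Drinfeld structure; this is where the comodule structure of $M$ (as an object of $\rydRsmashrat$, restricted to $\rydRH$) and the coassociativity of $\delta_X$ come into play, producing a factor involving $m\swe1$ together with further Sweedler components of $x$. \emph{Finally} I would apply $\varphi_{X,M_{\restr}}$, which by definition sends $x' \ot m' \mapsto x' \pi\cS^{-1}(m'\swe1) \ot m'\swe0$; but here one must be careful, since in $\varphi_{X,M_{\restr}}$ the second tensorand carries the comodule structure of $M_{\restr}$, so this step introduces $\pi\cS^{-1}$ applied to a component of $m\swe1$.

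\textbf{The main obstacle will be the bookkeeping of the interleaved Sweedler indices and the repeated use of the smash-product and cosmash-product formulas.} The target formula \eqref{computeZ1} involves five Sweedler legs of $x$ and one of $m$, which must emerge from combining the coassociativity of the right $R\#H$-coaction on $X$ with the multiplication rule \eqref{smashproduct}--\eqref{smash2} and the projection $\pi$. Concretely, after the middle step I expect terms of the form $m\pi(x\swe1)\,(\text{something})\swe1$ that must be reorganized using \eqref{cosmash} and the antipode relations \eqref{bigS}, \eqref{bigSinvers} to match the pattern $\pi(\cS(x\swe1)x\swe4 m\swe1)$ and $\pi\cS^{-1}(x\swe3)x\swe2$ on the two tensorands. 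The colinearity computations already carried out in the proof of Theorem \ref{theor:first} (the verification that $\varphi$ is $H$-colinear) provide the template for exactly these manipulations, so I would lean on those identities rather than re-deriving them. No genuinely new idea is needed beyond \eqref{defin:tilde}; the content is entirely in disentangling the coactions, and the correctness check at the end is that the resulting map indeed lands in $F(X) \ot F(M_{\restr})$ and satisfies the center axiom \eqref{objectcenter}, which is automatic since $F^{\cZ}$ is a functor into the center by Lemma \ref{lem:center}.
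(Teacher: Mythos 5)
Your overall strategy---trace $m\ot x$ through the three maps coming from the defining diagram \eqref{defin:tilde} and simplify using \eqref{rydBC}, coassociativity and the $\pi$/antipode identities---is exactly the paper's approach, but you have set the composition up backwards, and since you carry this consistently through all three steps it is a genuine error rather than a typo. The diagram \eqref{defin:tilde} states $\varphi_{X,M_{\restr}}\circ\ti{\ga}_{F(X)} = F(\ga_X)\circ\varphi_{M_{\restr},X}$, hence
\begin{align*}
\ti{\ga}_{F(X)} \;=\; \varphi_{X,M_{\restr}}^{-1}\circ F(c_{M,X})\circ\varphi_{M_{\restr},X}:
\end{align*}
one first applies $\varphi_{M_{\restr},X}$ itself (giving $m\,\pi\cS^{-1}(x\swe1)\ot x\swe0$), then the braiding, and \emph{last} the inverse $\varphi_{X,M_{\restr}}^{-1}(x\ot m)=x\,\pi(m\swe1)\ot m\swe0$, which is precisely \eqref{computeZ2}. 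You instead propose $\ti{\ga}_{F(X)} = \varphi_{X,M_{\restr}}\circ F(\ga_X)\circ\varphi_{M_{\restr},X}^{-1}$, i.e.\ conjugation by $\varphi$ in the opposite direction. Only the former is the structure map of $F^{\cZ}(M,\ga)$; in particular your closing appeal to Lemma \ref{lem:center} (that the center axiom \eqref{objectcenter} is ``automatic'') is only available for the correct composition, not for your map.

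The error is not harmless: your composition cannot reduce to \eqref{computeZ1}. Following your three steps, $\varphi_{M_{\restr},X}^{-1}(m\ot x)=m\,\pi(x\swe1)\ot x\swe0$, then $c_{M,X}$ from \eqref{defin:Psi} gives $x\swe0\ot m\,\pi(x\swe2)\,x\swe1$, and finally $\varphi_{X,M_{\restr}}$, applied with $\delta_M(ma)=m\swe0\,a\sw2\ot\cS(a\sw1)\,m\swe1\,a\sw3$ for $a=\pi(x\swe2)\,x\swe1$ by \eqref{rydBC}, yields
\begin{align*}
x\swe0\,\pi\bigl(\cS^{-1}(a\sw3)\,\cS^{-1}(m\swe1)\,a\sw1\bigr)\ot m\swe0\,a\sw2 ,
\end{align*}
whose first tensorand carries $\cS^{-1}(m\swe1)$; no identity in the paper removes an inverse antipode from the $M$-leg, whereas in \eqref{computeZ1} the element $m\swe1$ appears with no antipode and all antipodes land on $x$-legs. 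So your recipe computes a genuinely different map. Once the composition is corrected, the rest of your plan coincides with the paper's proof: apply $\varphi_{M_{\restr},X}$, then $F(c_{M,X})=c_{M,X}$ (you correctly note $F$ is the identity on morphisms), then $\varphi_{X,M_{\restr}}^{-1}$, and simplify using the Yetter--Drinfeld relation \eqref{rydBC}, coassociativity of $\delta_X$, and the fact that $\pi$ is a Hopf algebra map, which is exactly the Sweedler bookkeeping you anticipate.
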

\begin{proof}
Let $X \in \rydRH$ with comodule structure
$$X \to X \ot R \#H,\; x \mapsto x\swe0 \ot x\swe1.$$
Recall that $\ti{\ga}_{F(X)} = {\varphi_{X,M_{\restr}}}^{-1} F(c_{M,X}) \varphi_{M_{\restr},X}$ by \eqref{defin:tilde}. It follows from the definition of $\varphi_{X,M_{\restr}}$ in Theorem \ref{theor:first} that
\begin{align}
{\varphi_{X,M_{\restr}}}^{-1}(x \ot m) = x \pi(m\swe1) \ot m\swe0 \label{computeZ2}
\end{align}
for all $x \in X, m\in M$. Hence
\begin{align*}
\tilde{\ga}_{F(X)}(m \ot x)&= {\varphi_{X,M_{\restr}}}^{-1} F(c_{M,X}) \varphi_{M_{\restr},X}(m \ot x)\\
&={\varphi_{X,M_{\restr}}}^{-1} F(c_{M,X})(m \pi \cS^{-1}(x\swe1) \ot x\swe0)\\
&={\varphi_{X,M_{\restr}}}^{-1}\left(x\swe0 \ot m \pi\cS^{-1}(x\swe2) x\swe1\right)\\
&= x\swe0 \pi\left(\cS\left(\left(\pi\cS^{-1}(x\swe2) x\swe1\right)\swe1\right) m\swe1 \left(\pi\cS^{-1}(x\swe2) x\swe1\right)\swe3\right)\\
 &\phantom{aa}\ot m\swe0 \left(\pi\cS^{-1}(x\swe2) x\swe1\right)\swe2\\
 &= x\swe0 \pi\left(\cS(\cS^{-1}(x\swe6) x\swe1) m\swe1 \cS^{-1}(x\swe4) x\swe3\right) \ot m\swe0 \pi\cS^{-1}(x\swe5) x\swe2\\
 &=x\swe0 \pi\left(\cS(x\swe1) x\swe4 m\swe1\right) \ot m\swe0 \pi\cS^{-1}(x\swe3) x\swe2.
\end{align*}
\end{proof}

In the next lemma we define a map $\delta_{\widetilde{F}(M)}$ which will be the coaction of $R \# H$ on $\widetilde{F}(M)$ in Theorem \ref{theor:second}.

\begin{lemma}\label{lem:second2}
Let $M \in \rydRsmashrat$. We denote the left $H$-comodule structure of $F(M_{\restr})$ by $M \to H \ot M,\; m \mapsto m\sw{-1} \ot m\sw0$. Define a linear map
\begin{align*}
\delta_M^{R^{\vee}} : M \to R^{\vee} \ot M,\; m \mapsto m\swos{-1} \ot m\swos0,
\end{align*}
by the equation
\begin{align}
mr = \langle r, \cS_{R^{\vee}}^{-1}\left(\cS^{-1}({m\swos0}\sw{-1}) \lact m\swos{-1}\right)\rangle' {m\swos0}\sw0\label{deltaRvee}
\end{align}
for all $r \in R,m \in M$.  Let
\begin{align}
\delta_{\widetilde{F}(M)} : M \to R^{\vee} \#H \ot M, \;m \mapsto  m\swoe{-1} \ot m\swoe0 = m\swos{-1} {m\swos0}\sw{-1} \ot {m\swos0}\sw0.\label{delta}
\end{align}
Then the following hold.
\begin{enumerate}
\item For all $m \in M,a \in R \#H$,
\begin{align*}
\langle \cS^{-1}({m\swos0}\sw{-1}) \lact m\swos{-1}, \vartheta\cS(a)\rangle {m\swos0}\sw0 = m \pi\cS^{-1}(a\sw2) a\sw1.
\end{align*}
\item Let $X \in \rydRH$, and let $\ti{\ga}_{F(X)} : F(M_{\restr}) \ot F(X) \to F(X) \ot F(M_{\restr})$ be the isomorphism in $\ydRveeH$ defined in Lemma \ref{lem:second1}. Then for all $x \in X$ and $m \in M$, $m\swoe{-1}x \ot m\swoe0 = \ti{\ga}_{F(X)}(m \ot x)$.
\item For all $m \in M$, $\pi(m\swoe{-1}) \ot m\swoe0 = m\sw{-1} \ot m\sw0$.
\end{enumerate}
\end{lemma}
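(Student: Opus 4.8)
These three identities are exactly the input required to feed the center object $F^{\cZ}\Psi(M)=(F(M_{\restr}),\ti{\ga})$ of Lemma~\ref{lem:second1} into Proposition~\ref{propo:modulecenter}\,\eqref{modulecenter2}, applied with $B=R^{\vee}\#H$, $C=H$ and $\rho=\pi$: part~(2) is condition \eqref{module1} for the candidate coaction $\delta_{\widetilde F(M)}$ of \eqref{delta}, and part~(3) is condition \eqref{module2}. Part~(1) is the bridge that rewrites the $R^{\vee}$-coaction $\delta^{R^\vee}_M$ (which records the \emph{rational} right $R$-action of $M$) as a right $R\#H$-action of $M$, and it is what unlocks part~(2). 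So the plan is to prove (1), deduce (3) at once, and then attack (2).

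For~(1) I start from the defining equation \eqref{deltaRvee}. Rewriting $\langle\,,\,\rangle'$ through \eqref{defin:inversepair} and moving $\cS_{R^\vee}^{-1}$ across the pairing by \eqref{pair6}, equation \eqref{deltaRvee} becomes
\begin{align*}
mr=\langle \cS^{-1}({m\swos0}\sw{-1})\lact m\swos{-1},\,\cS_R^{-1}\cS^2(r)\rangle\,{m\swos0}\sw0
\end{align*}
for all $r\in R$. Since $R$ is stable under $\cS^2$ and $\cS_R$ is bijective, $r\mapsto\cS_R^{-1}\cS^2(r)$ is a bijection of $R$; substituting the preimage of $\vartheta\cS(a)$ turns the left-hand side of~(1) into $m\,\cS^{-2}\cS_R\vartheta\cS(a)$, so that (1) reduces to the algebra identity $\cS^{-2}\cS_R\vartheta\cS(a)=\pi\cS^{-1}(a\sw2)a\sw1$ in $R\#H$. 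As $R\#H=RH$ and both sides are linear it suffices to take $a=rh$: using \eqref{cosmash}, the $H$-colinearity of $\varepsilon_R$ and \eqref{smash1} one gets $\pi\cS^{-1}((rh)\sw2)(rh)\sw1=\cS^{-1}(h)\lact r$, while \eqref{vartheta2}, \eqref{vartheta1}, the formula $\vartheta\cS(r)=\cS_R(\cS(r\sw{-1})\lact r\sw0)$, \eqref{bigS2} and the $\cS^2$-equivariance of the $H$-action coming from \eqref{action} give the same value $\cS^{-1}(h)\lact r$ for $\cS^{-2}\cS_R\vartheta\cS(rh)$.

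Part~(3) follows from counitality of $\delta^{R^\vee}_M$. Putting $r=1_R$ in \eqref{deltaRvee} and using $\langle 1_R,\eta\rangle'=\varepsilon_{R^\vee}(\eta)$ (from \eqref{defin:inversepair} and \eqref{pair5}) together with the $H$-linearity of $\varepsilon_{R^\vee}$ gives $\varepsilon_{R^\vee}(m\swos{-1})\,m\swos0=m$. Hence, by \eqref{delta} and multiplicativity of $\pi$,
\begin{align*}
\pi(m\swoe{-1})\ot m\swoe0=\varepsilon_{R^\vee}(m\swos{-1})\,{m\swos0}\sw{-1}\ot{m\swos0}\sw0=m\sw{-1}\ot m\sw0,
\end{align*}
the last equality being counitality applied inside $\delta^{H}_{F(M_{\restr})}$.

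Part~(2) is the main obstacle. I would compute $m\swoe{-1}x\ot m\swoe0$ directly: by \eqref{delta} the element acting on $F(X)$ is $m\swos{-1}\,{m\swos0}\sw{-1}\in R^{\vee}\#H$, and expanding its action through \eqref{first1}, \eqref{first3} and the right Yetter--Drinfeld condition \eqref{rydBC} for $X$ brings it to
\begin{align*}
m\swoe{-1}x\ot m\swoe0=\langle \cS^{-1}(({m\swos0}\sw{-1})\sw1)\lact m\swos{-1},\,\vartheta\cS(x\swe1)\rangle\,x\swe0\cS^{-1}(({m\swos0}\sw{-1})\sw2)\ot{m\swos0}\sw0.
\end{align*}
The pairing against $\vartheta\cS(x\swe1)$ is precisely what part~(1) converts into the right $R\#H$-action of $M$; applying (1) with $a$ ranging over the coaction factor of $X$ replaces the $R^{\vee}$-coaction data of $M$ by an expression $m\,\pi\cS^{-1}(\cdot)(\cdot)$, which is then rewritten through the right Yetter--Drinfeld condition \eqref{rydBC} of $M\in\rydRsmash$ in terms of $\delta_M(m)=m\swe0\ot m\swe1$. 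After simplification with the smash relations this should reproduce the explicit formula \eqref{computeZ1} for $\ti{\ga}_{F(X)}$. The genuine difficulty is the bookkeeping that reconciles the two descriptions of $M$ --- its $R^{\vee}$-coaction together with $\delta^{H}$ versus its full $R\#H$-coaction --- because \eqref{rydBC} for $X$ comultiplies the factor ${m\swos0}\sw{-1}$ and distributes the pieces across the two tensor legs, so matching all Sweedler legs of $\delta_X(x)$ against \eqref{computeZ1} is where the real work lies. (The case $X=\fie$, where both sides collapse to $1\ot m$, is a reassuring consistency check.)
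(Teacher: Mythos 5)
Parts (1) and (3) of your proposal are correct, and your treatment of (1) is a legitimate mild variant of the paper's argument: the paper reduces (1) to $a\in\cS^{-1}(R)$ (observing the statement is stable under $a\mapsto ha$) and computes $\pi\cS^{-1}(a\sw2)a\sw1=\cS_R\cS^{-2}(r)$ for $a=\cS^{-1}(r)$ via \eqref{antipode}, whereas you isolate the equivalent element identity $\cS^{-2}\cS_R\vartheta\cS(a)=\pi\cS^{-1}(a\sw2)a\sw1$ in $R\#H$ and verify it on $a=rh$; both rest on the same reformulation of \eqref{deltaRvee} through \eqref{defin:inversepair} and \eqref{pair6}, and your two element computations check out. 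One small omission: you should also justify that \eqref{deltaRvee} actually determines a linear map $\delta^{R^\vee}_M$ at all; the paper opens its proof of (1) with exactly this point, using rationality of $M$, non-degeneracy of the pairing, and bijectivity of $\cS_{R^\vee}$ and of $\xi\ot m\mapsto \cS^{-1}(m\sw{-1})\lact\xi\ot m\sw0$.

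For (2), however, there is a genuine gap, and you have located it yourself without closing it. The expansion you display via \eqref{first1}, \eqref{first3} and \eqref{rydBC} is correct, and in the paper's iterated-coaction notation it reads
\begin{align*}
m\swoe{-1}x \ot m\swoe0 = \langle m\swos{-1}, \vartheta\cS\left( x\swe1 \cS^{-1}({m\swos0}\sw{-2})\right) \rangle\, x\swe0\, \cS^{-1}({m\swos0}\sw{-1}) \ot {m\swos0}\sw0 .
\end{align*}
At this point part (1) cannot be applied as you suggest: (1) requires the exact pattern $\langle \cS^{-1}({m\swos0}\sw{-1}) \lact m\swos{-1}, \vartheta\cS(a)\rangle\, {m\swos0}\sw0$, with the entire $H$-coaction leg of $m\swos0$ absorbed into the pairing and no other occurrence of the $M$-legs, whereas here the leg ${m\swos0}\sw{-1}$ is stranded inside the $X$-tensorand. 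The paper's device for disentangling is to apply the bijection \eqref{iso}, $x\ot m\mapsto x\,\cS^{-2}(m\sw{-1})\ot m\sw0$, to \emph{both} sides of the target identity \eqref{second22}: on the left the stray legs cancel via $\cS^{-1}(h\sw1)\cS^{-2}(h\sw2)=\varepsilon(h)1$, which together with \eqref{vartheta2} and \eqref{pair2} puts the left-hand side exactly in the shape of (1) with $a=x\swe1$; on the right a separate several-line computation using \eqref{rydBC} for $M$ and the smash relations reduces the twisted expression to $x\swe0\ot m\,\pi\cS^{-1}(x\swe2)x\swe1$, which is precisely what (1) produces. Your sketch replaces this decoupling step by the remark that ``the bookkeeping \dots is where the real work lies''; that bookkeeping is the entire mathematical content of (2), and it is not evident that a head-on matching of Sweedler legs against \eqref{computeZ1} succeeds without some trick of this kind. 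As written, your proposal proves (1) and (3) but not (2).
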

\begin{proof}
(1) The map $\delta_M^{R^{\vee}}$ is well-defined since $M$ is a rational right $R$-module, $\langle\;,\;\rangle$ is non-degenerate, and the maps $\cS_{R^{\vee}}$ and
\begin{align*}
R^{\vee} \ot M \to R^{\vee} \ot M,\; \xi \ot m \mapsto \cS^{-1}({m}\sw{-1}) \lact \xi \ot {m}\sw0,
\end{align*}
are bijective.

Note that if (1) holds for $a \in R \#H$ then it holds for $ha$ for all $h \in H$. Thus it is enough to assume in (1) that $a \in \cS^{-1}(R)$. For all $r \in R$  and $a= \cS^{-1}(r)$,
\begin{align*}
\pi\cS^{-1}(a\sw2) a\sw1 = \pi\cS^{-2}(r\sw1) \cS^{-1}(r\sw2) = \cS_R(\cS^{-2}(r))
\end{align*}
by \eqref{antipode}. Therefore (1) is equivalent to
\begin{align*}
\langle \cS^{-1}({m\swos0}\sw{-1}) \lact m\swos{-1}, r\rangle {m\swos0}\sw0 = m \cS_R(\cS^{-2}(r))
\end{align*}
for all $r \in R, m\in M$. This last equation holds by our definition of $\delta_M^{R^{\vee}}$ since
\begin{align*}
m \cS_R(\cS^{-2}(r)) &= \langle \cS_R(\cS^{-2}(r)), \cS_{R^{\vee}}^{-1}\left(\cS^{-1}({m\swos0}\sw{-1}) \lact m\swos{-1}\right)\rangle' {m\swos0}\sw0&\text{(by } \eqref{deltaRvee})&\\
&=\langle \cS^{-2}(r), \cS^{-1}({m\swos0}\sw{-1}) \lact m\swos{-1}\rangle' {m\swos0}\sw0 & \text{(by } \eqref{pair6})&\\
&=\langle \cS^{-1}({m\swos0}\sw{-1}) \lact m\swos{-1}, r\rangle {m\swos0}\sw0.&&
\end{align*}
Here, we used that by Lemma \ref{lem:inversepair}, $(R^{\vee},R)$ together with $\langle \;,\; \rangle' : R \ot R^{\vee} \to \fie$ is a dual pair of Hopf algebras in $\ydH$.

(2) Let $X \in \rydRH$. By Lemma \ref{lem:second1} we have to show that
\begin{align}
m\swoe{-1} x \ot m\swoe0 =x\swe0 \pi(\cS(x\swe1) x\swe4 m\swe1) \ot m\swe0 \pi\cS^{-1}(x\swe3) x\swe2\label{second21}
\end{align}
for all $x \in X,m\in M$.

By \eqref{delta} and \eqref{deltaRvee}, the  left-hand side of \eqref{second21} can be written as
\begin{align*}
m\swoe{-1}x \ot m\swoe0 &= m\swos{-1} ({m\swos0}\sw{-1}x) \ot {m\swos0}\sw0\\
&=m\swos{-1} (x\cS^{-1}({m\swos0}\sw{-1})) \ot {m\swos0}\sw0\\
&= \langle m\swos{-1}, \vartheta\cS\left({m\swos0}\sw{-1} x\swe1 \cS^{-1}({m\swos0}\sw{-3})\right) \rangle x\swe0 \cS^{-1}({m\swos0}\sw{-2}) \ot {m\swos0}\sw0\\
&= \langle m\swos{-1}, \vartheta\cS\left( x\swe1 \cS^{-1}({m\swos0}\sw{-2})\right) \rangle x\swe0 \cS^{-1}({m\swos0}\sw{-1}) \ot {m\swos0}\sw0,
\end{align*}
where the last equality follows from \eqref{vartheta1}. Thus \eqref{second21} is equivalent to the equation
\begin{align}
&\phantom{aa.}\langle m\swos{-1}, \vartheta\cS\left( x\swe1 \cS^{-1}({m\swos0}\sw{-2})\right) \rangle x\swe0 \cS^{-1}({m\swos0}\sw{-1}) \ot {m\swos0}\sw0 \label{second22}\\
 &=x\swe0 \pi(\cS(x\swe1) x\swe4 m\swe1) \ot m\swe0 \pi\cS^{-1}(x\swe3) x\swe2\notag
\end{align}
for all $x \in X,m \in M$.

To simplify \eqref{second22} we apply the isomorphism
\begin{align}
X \ot M \to X \ot M,\; x \ot m \mapsto x \cS^{-2}(m\sw{-1}) \ot m\sw0.\label{iso}
\end{align}
Under the isomorphism \eqref{iso} the left-hand side of \eqref{second22} becomes
\begin{align*}
&\phantom{aa.}\langle m\swos{-1}, \vartheta\cS\left( x\swe1 \cS^{-1}({m\swos0}\sw{-1})\right) \rangle x\swe0  \ot {m\swos0}\sw0&&\\
&=\langle m\swos{-1}, \vartheta\left({m\swos0}\sw{-1}\cS( x\swe1)\right)\rangle x\swe0  \ot {m\swos0}\sw0&&\\
&=\langle m\swos{-1}, {m\swos0}\sw{-1} \lact \vartheta \cS( x\swe1)\rangle x\swe0  \ot {m\swos0}\sw0 & \text{(by } \eqref{vartheta2})&\\
&=\langle \cS^{-1}({m\swos0}\sw{-1}) \lact m\swos{-1}, \vartheta\cS(x\swe1)\rangle x\swe0  \ot {m\swos0}\sw0,& \text{(by } \eqref{pair2})&
\end{align*}
and the right-hand side equals
\begin{align*}
&\phantom{aa.}x\swe0 \pi\left(\cS(x\swe1) x\swe4 m\swe1\right) \cS^{-2}\cS\pi\left((m\swe0 \pi\cS^{-1}(x\swe3) x\swe2)\swe1\right) \ot (m\swe0 \pi\cS^{-1}(x\swe3) x\swe2)\swe0\\
&=x\swe0 \pi\left(\cS(x\swe1) x\swe8 m\swe2\right) \cS^{-1} \pi\left(\cS\left(\pi\cS^{-1}(x\swe7) x\swe2\right) m\swe1 \pi\cS^{-1}(x\swe5) x\swe4\right)\\
 &\phantom{aa.}\ot m\swe0 \pi \cS^{-1}(x\swe6) x\swe3\\
&= x\swe0 \pi\left(\cS(x\swe1) x\swe8 m\swe2\right) \cS^{-1}\pi\left(\cS(x\swe2)x\swe7 m\swe1 \cS^{-1}(x\swe5)x\swe4 \right)\\
&\phantom{aa.}\ot m\swe0 \pi \cS^{-1}(x\swe6) x\swe3\\
&=x\swe0 \ot m \pi\cS^{-1}(x\swe2) x\swe1.
\end{align*}
Thus the claim follows from (1).

(3) Let $m \in M$. By \eqref{deltaRvee} and \eqref{pair4}, $m = m1 = \varepsilon(m\swos{-1}) m\swos0$. Hence
$$\pi(m\swoe{-1}) \ot m\swoe0 = \varepsilon(m\swos{-1}) {m\swos0}\sw{-1}  \ot {m\swos0}\sw0 = m\sw{-1} \ot m\sw0.$$
\end{proof}

The existence of $\widetilde{G}$ will follow from the next two lemmas.

Let $M \in {\ydRveesmashrat}$. We denote the left $R^{\vee} \# H$-comodule structure of $M$ by
$$M \to R^{\vee} \# H \ot M, \; m \mapsto m\swoe{-1} \ot m\swoe0 = m\swos{-1}{m\swos0}\sw{-1} \ot {m\swos0}\sw0,$$
where $M \to R^{\vee} \ot M,\; m \mapsto m\swos{-1} \ot m\swos0,$ is the $R^{\vee}$-comodule structure of $M$.
For all $X \in {\ydRveeHrat}$ the right $R \# H$-comodule structure of $G(X)$  is denoted by
$$X \to X \ot R \# H,\; x \mapsto x\swe0 \ot x\swe1.$$
Note that $G(X) = X$ as a vector space. The right $H$-module structure of $G(X)$ is defined by
\begin{align}
xh = \cS(h)x\label{ruleX}
\end{align}
for all $x \in X, h \in H$. Since $FG({}^{\pi}M) ={}^{\pi}M$, it follows that
\begin{align}
\pi\cS(m\swe1) \ot m\swe0 = \pi(m\swoe{-1}) \ot m\swoe0\label{ruleM}
\end{align}
for all $m \in M$.

\begin{lemma}\label{lem:second3}
Let $(G^{\cZ},\psi^{\cZ}) : \cZ(\ydRveeHrat) \to \cZ(\rydRH)$ be the monoidal isomorphism induced by the monoidal isomorphism $(G,\psi)$. Let $M \in \ydRveesmashrat$, and $\Phi(M) = (^{\pi}M, \gamma)$, where $\gamma = c_M$ is defined in \eqref{defin:Phi}.
Then
$$G^{\cZ} \Phi(M) = (G(^{\pi}M), \widetilde{\gamma}),$$
and $\widetilde{\gamma}_{G(X)} : G({}^{\pi}M) \ot G(X) \to G(X) \ot G({}^{\pi}M)$ is given by
\begin{align}
\widetilde{\gamma}_{G(X)}(m \ot x) = \left(\cS^{-1}({m\swos{0}}\sw{-1} \pi\cS^2(x\swe1)) \lact m\swos{-1}\right) x\swe0 \ot \pi \cS(x\swe2) {m\swos0}\sw0\label{resulttilde}
\end{align}
for all $X \in {\ydRveeHrat}, x \in X$ and $m \in M$.
\end{lemma}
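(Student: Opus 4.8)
The strategy is to compute $\ti{\ga}_{G(X)}$ directly from its defining diagram, exactly as $\ti{\ga}_{F(X)}$ was computed in Lemma \ref{lem:second1}, but now for the inverse monoidal isomorphism $(G,\psi)$. Applying \eqref{defin:tilde} to the object $\Phi(M)=({}^{\pi}M,\ga)$ with $\ga=c_M$ gives
\begin{align*}
\ti{\ga}_{G(X)}=\psi_{X,{}^{\pi}M}^{-1}\,G(c_{M,X})\,\psi_{{}^{\pi}M,X},
\end{align*}
so the task reduces to making the three factors explicit and composing them. From \eqref{inversemonoidal} and Theorem \ref{theor:first} (where $\varphi^{-1}(m\ot n)=m\pi(n\swe1)\ot n\swe0$) one reads off $\psi_{U,V}(u\ot v)=u\pi(v\swe1)\ot v\swe0$ and $\psi_{U,V}^{-1}(u\ot v)=u\pi\cS^{-1}(v\swe1)\ot v\swe0$, where $(-)\swe0\ot(-)\swe1$ is the right $R\#H$-comodule of the relevant $G$-image, while the braiding is $c_{M,X}(m\ot x)=m\swoe{-1}x\ot m\swoe0$ by \eqref{defin:Phi}.

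First I would evaluate $\psi_{{}^{\pi}M,X}(m\ot x)=m\pi(x\swe1)\ot x\swe0$ and rewrite the right $H$-action of $G({}^{\pi}M)$ via \eqref{ruleX} (equivalently \eqref{first1}) as $m\pi(x\swe1)=\pi\cS(x\swe1)m$, now the left $H$-action of $M$. Applying $c_{M,X}$ then forces me to expand the $R^{\vee}\#H$-coaction of $\pi\cS(x\swe1)m$: here the defining Yetter-Drinfeld compatibility of $\ydRveesmash$, that is \eqref{relativeYD} with $\rho=\id$, together with the smash relation \eqref{smash1}, splits the conjugated coaction so that its $R^{\vee}$-leg acquires an $H$-action $\lact m\swos{-1}$ and its $H$-leg becomes a conjugate of ${m\swos0}\sw{-1}$. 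This is precisely where the $H$-action on $m\swos{-1}$ visible in \eqref{resulttilde} originates. The resulting $R^{\vee}$-element then acts on $x\swe0$, and crucially this $R^{\vee}$-action is kept abstract: unlike in Lemma \ref{lem:second2}, no non-degeneracy collapse of the pairing \eqref{first3} is invoked, which is exactly why $m\swos{-1}$ survives unpaired in \eqref{resulttilde}.

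Finally I would apply $\psi_{X,{}^{\pi}M}^{-1}$, whose factor $\pi\cS^{-1}$ is applied to the right $R\#H$-comodule leg of $G({}^{\pi}M)$, for which there is no closed formula. Since $\pi\cS^{-1}$ annihilates the $R$-component, only the $H$-projection of that leg is relevant, and it is eliminated using \eqref{ruleM}, $\pi\cS(m\swe1)\ot m\swe0=\pi(m\swoe{-1})\ot m\swoe0$, which re-expresses it through the $H$-leg of $M$'s own coaction; after this step only the $R^{\vee}\#H$-coaction of $M$ and the right $R\#H$-comodule of $G(X)$ remain.

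The main obstacle is then pure Sweedler-index bookkeeping, as in Lemma \ref{lem:second2}: one must collect the many $\cS^{\pm1},\cS^{\pm2}$ and $\pi,\vartheta$ coming from $\psi$, from $\psi^{-1}$, from \eqref{first1}, \eqref{ruleM}, and from the $H$-conjugation, and verify that the $H$-element acting on $m\swos{-1}$ assembles exactly into $\cS^{-1}({m\swos0}\sw{-1}\pi\cS^2(x\swe1))$ while the second tensor factor collapses to $\pi\cS(x\swe2){m\swos0}\sw0$. This uses the bosonization identities \eqref{action}--\eqref{antipode} and \eqref{vartheta}--\eqref{vartheta2}, the antipode rules \eqref{bigS2} and \eqref{ydiso}, and, should a $\vartheta$-term on the $R\#H$-side need to be moved past the $H$-action, Lemma \ref{lem:vartheta3}. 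Throughout, rationality of $M$ over $R^{\vee}$ is what guarantees that the coaction is well defined and that all the sums occurring are finite.
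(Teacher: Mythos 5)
Your proposal is correct and takes essentially the same route as the paper: both derive $\ti{\ga}_{G(X)}$ from the defining diagram \eqref{defin:tilde} together with \eqref{inversemonoidal} and the explicit $\varphi^{\pm 1}$ of Theorem \ref{theor:first}, convert right $H$-actions via \eqref{ruleX}, eliminate the unknown right $R\#H$-coaction of $G({}^{\pi}M)$ through \eqref{ruleM}, and finish with the Yetter-Drinfeld condition and the smash-product identity $\pi\cS^{-1}(a\sw2)a\sw1=\cS^{-1}(h)\lact \xi$. The only difference is organizational: you compose $\psi_{X,{}^{\pi}M}^{-1}G(c_{M,X})\psi_{{}^{\pi}M,X}$ directly and expand the coaction of $\pi\cS(x\swe1)m$ up front, whereas the paper evaluates the rearranged identity \eqref{equationtilde} to obtain \eqref{formulatilde} first and then substitutes $m\ot x=(\pi\cS(x\swe2)m)\,\pi\cS^{-1}(x\swe1)\ot x\swe0$ — the same algebra in a different order.
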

\begin{proof}
Let $X \in {\ydRveeHrat}$. By \eqref{defin:Phi}, $\gamma_X : {}^{\pi}M \ot X \to X \ot {^{\pi}M}$ is defined by
$$\gamma_X(m \ot x) = m\swoe{-1} x \ot m\swoe0$$
for all $x \in X,m \in M$.

By \eqref{inversemonoidal} and \eqref{defin:tilde}, the isomorphism $\widetilde{\gamma}_{G(X)}$ is defined by the equation
\begin{align}
\widetilde{\gamma}_{G(X)}G(\ph_{G({}^{\pi}M),G(X)})  = G(\ph_{G(X),G({}^{\pi}M)})G(\gamma_X).\label{equationtilde}
\end{align}
We apply both sides of \eqref{equationtilde} to an element $m \ot x, m \in M,x \in X$. Then
\begin{align*}
\widetilde{\gamma}_{G(X)}G(\ph_{G({}^{\pi}M),G(X)})(m \ot x) &= \widetilde{\gamma}_{G(X)}(m \pi\cS^{-1}(x\swe1) \ot x\swe0),&&\\
\intertext{and}
G(\ph_{G(X),G({}^{\pi}M)})G(\gamma_X)(m \ot x) &= (m\swoe{-1} x) \pi\cS^{-1}({m\swoe0}\swe1) \ot {m\swoe0}\swe0&&\\
&=\pi({m\swoe0}\swe1) (m\swoe{-1} x) \ot {m\swoe0}\swe0 &&(\text{by } \eqref{ruleX})\\
&=\pi\cS^{-1}(m\swoe{-1}) m\swoe{-2}x \ot m\swoe0 && (\text{by } \eqref{ruleM})\\
&=\left(\cS^{-1}({m\swos0}\sw{-1}) \lact m\swos{-1}\right)x \ot {m\swos0}\sw0,
\end{align*}
where in the proof of the last equality the following formula in $R^{\vee} \# H$ is used for $a = m\swoe{-1} = m\swos{-1} {m\swos0}\sw{-1}$. Let $\xi \in R^{\vee},h \in H$ and $a=\xi h \in R^{\vee} \# H$. Then
\begin{align*}
\pi\cS^{-1}(a\sw2)a\sw1 &= \pi\cS^{-1}({\xi\swo2}\sw0 h\sw2) \xi\swo1 {\xi\swo2}\sw{-1} h\sw1\\
&= \cS^{-1}(h\sw2) \xi h\sw1\\
&= \cS^{-1}(h) \lact \xi.
\end{align*}
We have shown that
\begin{align}
\widetilde{\gamma}_{G(X)}(m \pi\cS^{-1}(x\swe1) \ot x\swe0)= \left(\cS^{-1}({m\swos0}\sw{-1}) \lact m\swos{-1}\right)x \ot {m\swos0}\sw0.\label{formulatilde}
\end{align}
Since $m \ot x = m \pi(x\swe2) \pi\cS^{-1}(x\swe1) \ot x\swe0 = (\pi\cS(x\swe2)m) \pi\cS^{-1}(x\swe1) \ot x\swe0$, we obtain from \eqref{formulatilde} and the Yetter-Drinfeld condition for $M$
\begin{align*}
\widetilde{\gamma}_{G(X)}&(m \ot x) \\
&=\left(\cS^{-1}\left({(\pi\cS(x\swe1)m)\swos0}\sw{-1}\right) \lact \left(\pi\cS(x\swe1)m\right)\swos{-1}\right)x\swe0
\ot {(\pi\cS(x\swe1)m)\swos0}\sw0\\
&=\left(\cS^{-1}\left((\pi\cS(x\swe1)m\swos0)\sw{-1}\right) \lact \pi\cS(x\swe2) m\swos{-1}\right) x\swe0 \ot \left(\pi\cS(x\swe1) m\swos0\right)\sw0\\
&=\left(\cS^{-1}({m\swos{0}}\sw{-1} \pi\cS^2(x\swe1)) \lact m\swos{-1}\right) x\swe0 \ot \pi \cS(x\swe2) {m\swos0}\sw0.
\end{align*}
\end{proof}

\begin{lemma}\label{lem:second4}
Let $M \in \ydRveesmashrat$, and $G^{\cZ} \Phi(M) = (G(^{\pi}M), \widetilde{\gamma})$ as in Lemma \ref{lem:second3}. Let $\langle\;,\; \rangle' : R \ot R^{\vee} \to \fie$ be the form defined in \eqref{defin:inversepair}. Define a linear map
$\mu_{\widetilde{G}(M)} : M \ot R \# H \to M$
by
\begin{align}
\mu_{\widetilde{G}(M)}(m \ot a)= \langle m\swos{-1},{m\swos0}\sw{-1} \lact \vartheta(\pi\cS^2(a\sw2) \cS(a\sw1)) \rangle \pi\cS(a\sw3) {m\swos0}\sw0 \label{tildeaction}
\end{align}
for all $m \in M,a \in R \# H$. Then the following hold.
\begin{enumerate}
\item For all $X \in {\ydRveeHrat}, x \in X$ and $m \in M$,
$$\widetilde{\gamma}_{G(X)} (m \ot x) = x\swe0 \ot \mu_{\widetilde{G}(M)}(m \ot x\swe1).$$
\item For all $m \in M$ and $h \in H$, $\mu_{\widetilde{G}(M)}(m \ot h) = mh$.
\item For all $m \in M$ and $r \in R$, $\mu_{\widetilde{G}(M)}(m \ot r)=
\langle r, \vartheta\cS^{-1}(m\swoe{-1}) \rangle' m\swoe0$.
\end{enumerate}
\end{lemma}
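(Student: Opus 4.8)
The purpose of the three assertions is to verify the hypotheses of Proposition~\ref{propo:comodulecenter}\eqref{comodulecenter2} for the center object $G^{\cZ}\Phi(M)=(G(^{\pi}M),\widetilde{\gamma})$ computed in Lemma~\ref{lem:second3}. Indeed, (1) is precisely condition \eqref{comodule1} for the candidate $R\#H$-action $\mu_{\widetilde{G}(M)}$, and (2) is condition \eqref{comodule2}, since $\Psi$ is taken with respect to $\iota:H\to R\#H$ and the underlying $H$-module structure of $G(^{\pi}M)\in\rydRH$ is $mh=\cS(h)m$ by \eqref{ruleX}; granting (1) and (2), Proposition~\ref{propo:comodulecenter}\eqref{comodulecenter2} then yields $\widetilde{G}(M)\in\rydRsmash$ with action $\mu_{\widetilde{G}(M)}$ and $\Psi(\widetilde{G}(M))=(G(^{\pi}M),\widetilde{\gamma})$. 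Assertion (3) records the value of $\mu_{\widetilde{G}(M)}$ on $R$ through the inverse pairing $\langle\;,\;\rangle'$, which will exhibit $\widetilde{G}(M)$ as rational over $R$. Throughout I use that $F,G,\varphi,\psi$ leave underlying vector spaces unchanged, so on $X$ I may simultaneously use the $R^{\vee}\#H$-module structure from $\ydRveeHrat$ and the $R\#H$-comodule structure of $G(X)\in\rydRH$.

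For (1) the plan is to start from \eqref{resulttilde}. Its first tensorand is the left $R^{\vee}$-action of $X$ applied to $x\swe0$, and since $X=FG(X)$ this action is given by \eqref{first3}, namely $\xi x\swe0=\langle\xi,\vartheta\cS((x\swe0)\swe1)\rangle(x\swe0)\swe0$. Substituting and relabelling the iterated $R\#H$-comodule structure of $G(X)$ by coassociativity turns the first tensorand into a bare $x\swe0$ and rewrites \eqref{resulttilde} as
\begin{align*}
\widetilde{\gamma}_{G(X)}(m\ot x)=\langle\cS^{-1}({m\swos0}\sw{-1}\pi\cS^2(x\swe2))\lact m\swos{-1},\vartheta\cS(x\swe1)\rangle\,x\swe0\ot\pi\cS(x\swe3){m\swos0}\sw0.
\end{align*}
Now \eqref{pair2} carries the $H$-element $\cS^{-1}({m\swos0}\sw{-1}\pi\cS^2(x\swe2))$ across the pairing, and then \eqref{vartheta2} (using $\pi\cS^2(x\swe2)\in H$) produces ${m\swos0}\sw{-1}\lact\vartheta(\pi\cS^2(x\swe2)\cS(x\swe1))$ as the second argument. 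Comparing with \eqref{tildeaction} evaluated at $a=x\swe1$, once more via coassociativity, yields exactly $\widetilde{\gamma}_{G(X)}(m\ot x)=x\swe0\ot\mu_{\widetilde{G}(M)}(m\ot x\swe1)$.

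Assertions (2) and (3) are then direct evaluations of \eqref{tildeaction}. For (2) I put $a=h\in H$: here $\pi$ restricts to the identity, $\vartheta$ sends $H$ into $\varepsilon(\;)1_R$, and $h\lact1_R=\varepsilon(h)1_R$, so the pairing collapses by \eqref{pair5}, and the counit axioms leave $\mu_{\widetilde{G}(M)}(m\ot h)=\cS(h)m=mh$. For (3) I first rewrite the right-hand side by applying Lemma~\ref{lem:reformulation} to the pair $(R,R^{\vee})$, obtaining $\vartheta\cS^{-1}(m\swoe{-1})\ot m\swoe0=\cS_{R^{\vee}}^{-1}(\cS^{-1}({m\swos0}\sw{-1})\lact m\swos{-1})\ot{m\swos0}\sw0$; then \eqref{defin:inversepair}, \eqref{pair6} and \eqref{pair2} (legitimate since $(R^{\vee},R)$ with $\langle\;,\;\rangle'$ is a dual pair by Lemma~\ref{lem:inversepair}) reduce (3) to the identity
\begin{align*}
\langle m\swos{-1},{m\swos0}\sw{-1}\lact\vartheta(\pi\cS^2(r\sw2)\cS(r\sw1))\rangle\,\pi\cS(r\sw3){m\swos0}\sw0=\langle m\swos{-1},{m\swos0}\sw{-1}\lact\cS_R^{-1}\cS^2(r)\rangle\,{m\swos0}\sw0.
\end{align*}

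This last identity is the main obstacle. Proving it requires expanding the iterated $R\#H$-comultiplication of $r\in R$ through \eqref{cosmash}, expressing $\pi$ and $\vartheta$ by the coideal formulas \eqref{comult}, \eqref{antipode}, \eqref{vartheta1} and \eqref{vartheta2}, and using \eqref{bigS2} to absorb the residual $H$-action $\pi\cS(r\sw3)$ against the $H$-coaction ${m\swos0}\sw{-1}$ inside the pairing. The computation is consistent: when $H=\fie$ both sides collapse to $\langle m\swos{-1},\cS_R(r)\rangle\,{m\swos0}\sw0$, and the general case is the same calculation with the $H$-action and coaction carried along. With (1) and (2) established, (3) then also certifies that the $R$-action on $\widetilde{G}(M)$ is rational, completing the construction of $\widetilde{G}$.
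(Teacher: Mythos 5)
Your parts (1) and (2) track the paper's proof exactly: (1) is the substitution of the $R^{\vee}$-action \eqref{first3} of $X=FG(X)$ into the first tensorand of \eqref{resulttilde}, followed by \eqref{pair2} and \eqref{vartheta2}, and (2) is the same direct evaluation at $a=h$ collapsing via \eqref{pair5}. Your backward reduction of (3) --- rewriting the right-hand side through Lemma \ref{lem:reformulation}, \eqref{defin:inversepair}, \eqref{pair6} and \eqref{pair2} --- is also legitimate; it is in effect the last four steps of the paper's forward chain read in reverse, and the displayed identity you arrive at is true.

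The genuine gap is that you never prove that identity. You call it ``the main obstacle'', list ingredients, check only $H=\fie$, and assert the general case ``is the same calculation with the $H$-action and coaction carried along''. But the entire content of part (3) is the interaction of the $H$-action, the $H$-coaction and the three antipodes $\cS$, $\cS_R$, $\cS_{R^{\vee}}$ with the pairing --- exactly what vanishes when $H=\fie$ --- so the special case certifies nothing, and ``carried along'' is where all the bookkeeping lives. Your proposed route (expanding the iterated comultiplication of $r$ via \eqref{cosmash}) is moreover a detour: since $r\in R=(R\#H)^{\co H}$, one has $r\sw1\ot\pi(r\sw2)=r\ot1$, hence $r\sw1\ot\pi(r\sw2)\ot\pi(r\sw3)=r\ot1\ot1$, and the left-hand side of your identity collapses at once to $\langle m\swos{-1},{m\swos0}\sw{-1}\lact\vartheta\cS(r)\rangle\,{m\swos0}\sw0$; then
\begin{align*}
\vartheta\cS(r)=\vartheta\bigl(\cS(r\sw{-1})\cS_R(r\sw0)\bigr)=\cS(r\sw{-1})\lact\cS_R(r\sw0)=\cS_R(\theta_R(r))=\cS_R^{-1}\cS^2(r)
\end{align*}
by \eqref{bigS}, \eqref{vartheta2}, the $H$-linearity of $\cS_R$, and \eqref{bigS2}. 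This short verification is precisely the chain the paper runs forward (\eqref{bigS}, \eqref{vartheta2}, \eqref{pair2}, \eqref{pair6}, \eqref{bigS2}, \eqref{ruleS5}, \eqref{vartheta2}); until you supply it, part (3) --- and with it the rationality of $\widetilde{G}(M)$ over $R$, which is what the lemma is for in Theorem \ref{theor:second} --- remains unproven.
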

\begin{proof}
(1) Let $X \in {\ydRveeHrat}, x \in X$ and $m \in M$. Then by \eqref{resulttilde},
\begin{align*}
\widetilde{\gamma}_{G(X)}(m \ot x) &= \left(\cS^{-1}\left({m\swos{0}}\sw{-1} \pi\cS^2(x\swe1)\right) \lact m\swos{-1}\right) x\swe0 \ot \pi \cS(x\swe2) {m\swos0}\sw0 \\
&= \langle \cS^{-1}\left({m\swos0}\sw{-1} \pi\cS^2(x\swe2)\right) \lact m\swos{-1}, \vartheta \cS(x\swe1) \rangle x\swe0 \ot \pi \cS(x\swe3) {m\swos0}\sw0\\
&=x\swe0 \ot \langle m\swos{-1}, {m\swos0}\sw{-1} \lact \vartheta\left(\pi\cS^2(x\swe2) \cS(x\swe1)\right) \rangle \pi\cS(x\swe3) {m\swos0}\sw0\\
&= x\swe0 \ot \mu_{\widetilde{G}(M)}(m \ot x\swe1),
\end{align*}
where we used \eqref{first3} and the equality $X = FG(X)$ together with \eqref{vartheta2} and \eqref{pair2}.

(2) Let $m \in M$ and $h \in H$. Then
\begin{align*}
\mu_{\widetilde{G}(M)}(m \ot h) &= \langle m\swos{-1},{m\swos0}\sw{-1} \lact \vartheta\left(\pi\cS^2(h\sw2) \cS(h\sw1)\right) \rangle \pi\cS(h\sw3) {m\swos0}\sw0&&\\
&= \langle m\swos{-1},{m\swos0}\sw{-1} \lact 1 \rangle \pi\cS(h) {m\swos0}\sw0&&\\
&= \langle m\swos{-1},1 \rangle \pi\cS(h) m\swos0&& \text{(by } \eqref{pair5})\\
&= \pi\cS(h) m&&\\
&= mh.&&
\end{align*}

(3) Let $m \in M$ and $r \in R$. Then $r\sw1 \ot \pi(r\sw2) = r \ot 1$. Hence
\begin{align*}
\mu_{\widetilde{G}(M)}(m \ot r)&= \langle m\swos{-1}, {m\swos0}\sw{-1} \lact \vartheta(\cS(r)) \rangle {m\swos0}\sw0\\
&=\langle m\swos{-1}, {m\swos0}\sw{-1} \lact \vartheta(\cS(r\sw{-1}) \cS_R(r\sw0)) \rangle {m\swos0}\sw0&&\text{(by } \eqref{bigS})\\
&=\langle m\swos{-1}, \left({m\swos0}\sw{-1}\cS(r\sw{-1})\right) \lact \cS_R(r\sw0) \rangle {m\swos0}\sw0 &&\text{(by } \eqref{vartheta2})\\
&= \langle \cS^{-1}({m\swos0}\sw{-1}) \lact m\swos{-1}, \cS(r\sw{-1}) \lact \cS_R(r\sw0) \rangle {m\swos0}\sw0&&\text{(by } \eqref{pair2})\\
&= \langle \cS^{-1}({m\swos0}\sw{-1}) \lact \cS_{R^{\vee}}^{-1}(m\swos{-1}), \cS(r\sw{-1}) \lact \cS_R^2(r\sw0) \rangle {m\swos0}\sw0&&\text{(by } \eqref{pair6})\\
&=\langle \cS^{-1}({m\swos0}\sw{-1}) \lact \cS_{R^{\vee}}^{-1}(m\swos{-1}), \cS^2(r) \rangle {m\swos0}\sw0&&\text{(by } \eqref{bigS2})\\
&=\langle \cS^{-1}({m\swos0}\sw{-1}) \lact \vartheta\cS^{-1}(m\swos{-1}), \cS^2(r) \rangle {m\swos0}\sw0&&\text{(by } \eqref{ruleS5})\\
&=\langle \vartheta\cS^{-1}(m\swos{-1}{m\swos0}\sw{-1}), \cS^2(r) \rangle {m\swos0}\sw0&&\text{(by } \eqref{vartheta2})\\
&=\langle r, \vartheta\cS^{-1}(m\swoe{-1}) \rangle' m\swoe0.
\end{align*}
\end{proof}

\begin{theor}\label{theor:second}
Let $(R, R^{\vee})$ be a dual pair of Hopf algebras in $\ydH$ with bijective antipodes and bilinear form $\langle\;,\; \rangle : R^{\vee} \ot R \to \fie$. Let $\langle\;,\; \rangle' : R \ot R^{\vee} \to \fie$ be the form defined in \eqref{defin:inversepair}. Assume that $\ydRveeHrat$ is $R^{\vee} \#H$-faithful.

Then the functor
$$(\widetilde{F},\widetilde{\varphi}) : \rydRsmashrat \to \ydRveesmashrat$$
as defined below is a braided monoidal isomorphism.

For any object $M \in \rydRsmashrat$ with right $R \# H$-comodule structure denoted by
$$\delta_M: M \to M \ot R \# H,\; m \mapsto m\swe0 \ot m\swe1,$$
let $\widetilde{F}(M)=M$ as a vector space and $\widetilde{F}(M) \in \ydRveesmashrat$
with left $H$-action, $H$-coaction $\delta_{\widetilde{F}(M)}^H$, $R^{\vee}$-action, and $R^{\vee} \#H$-coaction
$$\delta_{\widetilde{F}(M)} : M \to R^{\vee} \#H \ot M,\; m \mapsto m\swoe{-1} \ot m\swoe0,$$
respectively, given by
\begin{align}
hm &= m \cS^{-1}(h),\label{tilde1}\\
\delta_{\widetilde{F}(M)}^H(m) &= \pi\cS(m\swe1) \ot m\swe0,\label{tilde2}\\
\xi m &= \langle \xi,\vartheta\cS(m\swe1)\rangle m\swe0,\label{tilde3}\\
mr&= \langle r, \vartheta\cS^{-1}(m\swoe{-1})\rangle' m\swoe0\label{tilde4}
\end{align}
for all $h \in H,m \in M, \xi \in R^{\vee}$ and $r \in R$. For any morphism $f$ in $\rydRsmashrat$ let $\widetilde{F}(f) = f$.
The natural transformation $\widetilde{\varphi}$ is defined by
\begin{align}
\widetilde{\varphi}_{M,N} : \widetilde{F}(M) \ot \widetilde{F}(N) &\to \widetilde{F}(M \ot N),\\
m \ot n &\mapsto m \pi\cS^{-1}(n\swe1) \ot n\swe0 = \pi \cS^{-1}(n\swoe{-1})m \ot n\swoe0,
\end{align}
for all $M,N \in \rydRsmashrat$.
\end{theor}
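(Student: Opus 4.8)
The plan is to obtain $(\widetilde F,\widetilde\varphi)$ as the functor that the braided monoidal isomorphism of centers $(F^{\cZ},\varphi^{\cZ})$ from Lemma~\ref{lem:center} induces on the subcategories $\rydRsmashrat$ and $\ydRveesmashrat$, once these are identified with full subcategories of $\cZ(\rydRH)$ and $\cZ(\ydRveeHrat)$ via the fully faithful functors $\Psi$ and $\Phi$. Thus I would first construct $\widetilde F$ and $\widetilde G$ so that the squares \eqref{diagramsecondF} and \eqref{diagramsecondG} commute, and then read off every assertion of the theorem from these commutativities together with the facts that $\Phi,\Psi$ are fully faithful and that $F^{\cZ},G^{\cZ}$ are mutually inverse.

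To build $\widetilde F$, fix $M\in\rydRsmashrat$ and apply $F^{\cZ}$ to $\Psi(M)$. By Lemma~\ref{lem:second1} this center object is $(F(M_{\restr}),\ti{\ga})$ with $\ti{\ga}$ given by \eqref{computeZ1}, and Lemma~\ref{lem:second2}(2),(3) exhibits the linear map $\delta_{\widetilde F(M)}$ satisfying precisely the two hypotheses \eqref{module1} and \eqref{module2} of Proposition~\ref{propo:modulecenter}(2). Since $\ydRveeHrat$ is $R^{\vee}\#H$-faithful, that proposition yields an object $\widetilde F(M)\in\ydRveesmashrat$, equal to $F(M_{\restr})$ as an $R^{\vee}\#H$-module and carrying the coaction $\delta_{\widetilde F(M)}$, with $\Phi(\widetilde F(M))=F^{\cZ}\Psi(M)$. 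The $H$- and $R^{\vee}$-module structures \eqref{tilde1}--\eqref{tilde3} are then those of $F(M_{\restr})$ from Theorem~\ref{theor:first}, rationality over $R^{\vee}$ is inherited from $F(M_{\restr})\in\ydRveeHrat$, and \eqref{tilde4} is the rewriting of the defining relation \eqref{deltaRvee} of $\delta_{\widetilde F(M)}$ provided by Lemma~\ref{lem:reformulation} applied to $R^{\vee}$. Symmetrically, Lemmas~\ref{lem:second3} and~\ref{lem:second4} feed Proposition~\ref{propo:comodulecenter}(2) to produce $\widetilde G\colon\ydRveesmashrat\to\rydRsmashrat$ with $\Psi(\widetilde G(M))=G^{\cZ}\Phi(M)$; here the $R$-action of $\widetilde G(M)$ is the one of Lemma~\ref{lem:second4}(3), which factors through the $R^{\vee}\#H$-coaction and the pairing $\langle\;,\;\rangle'$, so $\widetilde G(M)$ is automatically rational over $R$.

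Next I would verify that $\widetilde F$ is an isomorphism of categories with inverse $\widetilde G$. From the two commuting squares, $\Phi\widetilde F\widetilde G=F^{\cZ}\Psi\widetilde G=F^{\cZ}G^{\cZ}\Phi=\Phi$ and, symmetrically, $\Psi\widetilde G\widetilde F=\Psi$; as $\Phi$ and $\Psi$ are the identity on underlying spaces and on morphisms, they are injective on objects and faithful, whence $\widetilde F\widetilde G=\id$ and $\widetilde G\widetilde F=\id$. For the braided monoidal structure I would apply Lemma~\ref{lem:diagram} with the fully faithful strict braided monoidal functor $\Phi$ and the braided monoidal functor $(F^{\cZ}\Psi,\lambda)=(\Phi\widetilde F,\lambda)$: by \eqref{monoidalcomposition} and \eqref{monoidalcenter2}, and since $\Psi$ is strict, one has $\lambda_{M,N}=\varphi^{\cZ}_{\Psi(M),\Psi(N)}=\varphi_{M_{\restr},N_{\restr}}$. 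The lemma then produces a unique $\widetilde\varphi$ making $(\widetilde F,\widetilde\varphi)$ braided monoidal with $\Phi\widetilde F=F^{\cZ}\Psi$ as braided monoidal functors; because $\Phi$ is strict and the identity on morphisms, $\widetilde\varphi_{M,N}=\lambda_{M,N}=\varphi_{M_{\restr},N_{\restr}}$, which is exactly $m\ot n\mapsto m\pi\cS^{-1}(n\swe1)\ot n\swe0$. The alternative expression $\pi\cS^{-1}(n\swoe{-1})m\ot n\swoe0$ follows by rewriting via the $H$-action \eqref{tilde1} and the identity $\pi(n\swoe{-1})\ot n\swoe0=\pi\cS(n\swe1)\ot n\swe0$ coming from Lemma~\ref{lem:second2}(3) and \eqref{tilde2}.

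I expect the difficulty here to be organizational rather than computational: the substantive calculations have already been discharged in Lemmas~\ref{lem:second1}--\ref{lem:second4}, so the main obstacle is to confirm scrupulously that the abstract object delivered by Proposition~\ref{propo:modulecenter}(2) carries exactly the explicit structure maps \eqref{tilde1}--\eqref{tilde4} (and dually for $\widetilde G$), and to check that the two displayed formulas for $\widetilde\varphi_{M,N}$ really coincide.
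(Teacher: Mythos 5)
Your proposal is correct and follows essentially the same route as the paper: construct $\widetilde F$ and $\widetilde G$ from Lemmas \ref{lem:second1}--\ref{lem:second4} via Propositions \ref{propo:modulecenter}(2) and \ref{propo:comodulecenter}(2) so that the squares \eqref{diagramsecondF} and \eqref{diagramsecondG} commute, then obtain $\widetilde\varphi$ uniquely from Lemma \ref{lem:diagram} and identify it with $\varphi_{M_{\restr},N_{\restr}}$ via \eqref{monoidalcomposition} and \eqref{monoidalcenter2}. The only (harmless) deviation is that you deduce $\widetilde F\widetilde G=\id$ and $\widetilde G\widetilde F=\id$ abstractly from $F^{\cZ}G^{\cZ}=\id$ and the injectivity of $\Phi,\Psi$ guaranteed by $R^{\vee}\#H$-faithfulness, where the paper simply reads this off from the explicit structure maps; your added check that the two displayed formulas for $\widetilde\varphi_{M,N}$ agree (via \eqref{tilde1}, \eqref{tilde2} and Lemma \ref{lem:second2}(3)) is also correct.
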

\begin{proof}
Let $M \in \rydRsmashrat$. As in Lemma \ref{lem:second1} we write $\Psi(M)= (M_{\restr},\ga)$. Then
$$F^{\cZ} \Psi(M)= (F(M_{\restr}), \ti{\ga}).$$
By Lemma \ref{lem:reformulation}, the definitions of $\delta_{\widetilde{F}(M)}$ in Lemma \ref{lem:second2} and in \eqref{tilde4} coincide. Thus, by Lemma \ref{lem:second2} (2), for all $X \in \rydRH$, the isomorphism
$$\ti{\ga}_{F(X)} : F(M_{\restr}) \ot F(X) \to F(X) \ot F(M_{\restr})$$
has the form
$$\ti{\ga}_{F(X)}(m \ot x) = m\swoe{-1}x \ot m\swoe0$$
 for all $m \in M,x \in X$, where $\delta_{\widetilde{F}(M)}(m) = m\swoe{-1} \ot m\swoe0$ is defined in Lemma \ref{lem:second2}. By Lemma \ref{lem:second2} (3), the left $H$-comodule structure of $F(M_{\restr})$ is $(\pi \ot \id) \delta_{\widetilde{F}(M)}$.
The left $H$-action, $H$-coaction and $R^{\vee} \#H$-action of $\widetilde{F}(M)$ are those of $F(M_{\restr})$, see Theorem \ref{theor:first}.

We now conclude from Proposition \ref{propo:modulecenter} that $\widetilde{F}(M)$ with $R^{\vee} \#H$-comodule structure $\delta_{\widetilde{F}(M)}$ is an object in $\rydRsmashrat$, and $\Phi(\widetilde{F}(M)) = F^{\cZ}\Psi(M)$.

Thus we have defined a functor $\widetilde{F} : \rydRsmashrat \to \ydRveesmashrat$ such that the diagram \eqref{diagramsecondF} commutes. By Lemma \ref{lem:diagram} there is a uniquely determined family $\widetilde{\varphi}$ such that $(\ti{F},\ti{\varphi})$ is a braided monoidal functor with
$$(F^{\cZ},\varphi^{\cZ})(\Psi,\id) = (\Phi,\id)(\ti{F},\ti{\varphi}).$$
Let $M,N \in \rydRsmashrat$. Then $\Phi(\ti{\varphi}_{M,N}) = \varphi^{\cZ}_{\Psi(M),\Psi(N)}$ by \eqref{monoidalcomposition}, that is, for all $m \in M,n \in N$,
$$\ti{\varphi}_{M,N}(m \ot n)=\varphi_{M_{\restr},N_{\restr}}(m \ot n) = m\pi\cS^{-1}(n\swe1) \ot n\swe0$$
by Theorem~\ref{theor:first}.
To define the inverse functor of $\widetilde{F}$ let $M \in \ydRveesmashrat$.
Let $\widetilde{G}(M) = M$ as a vector space with right $R \# H$-comodule structure
and $H$-module structure given by ${}^{\pi}M$, and with right $R \# H$-module structure
$\mu_{\widetilde{G}}$ defined in \eqref{tildeaction}.
Then $\widetilde{G}(M) \in \rydRsmash$ by Proposition \ref{propo:comodulecenter} and
Lemma \ref{lem:second4} (1), (2). It follows from Lemma \ref{lem:second4} (3)
that $\widetilde{G}(M)$ is rational as an $R$-module. We let $\widetilde{G}(f) =f$
for morphisms in $\ydRveesmashrat$.

Thus we have defined a functor $\widetilde{G} : \ydRveesmashrat \to \rydRsmashrat$,
and it is clear form the explicit definitions of $\widetilde{F}$ and $\widetilde{G}$
that $\widetilde{F}\widetilde{G} = \id$ and $\widetilde{G}\widetilde{F} = \id$.
\end{proof}

\section{The third isomorphism}\label{sec:third}

Finally we compose the isomorphism in Theorem \ref{theor:second} with the isomorphism in Lemma \ref{lem:leftright}.

We recall from Lemma \ref{lem:Rmod} and Lemma \ref{lem:Rcomod} the description of left modules and left comodules over $R \# H$, where $R$ is a Hopf algebra in $\ydH$. In particular, the restriction of an object $M \in \ydRsmash$ with $R \# H$-comodule structure $\delta_M$ is an object in $\ydH$, where the $H$-action is defined by restriction and the $H$-coaction is $(\pi \ot \id)\delta_M$.
\begin{theor}\label{theor:third}
Let $(R, R^{\vee})$ be a dual pair of Hopf algebras in $\ydH$  with bijective antipodes and with bilinear form $\langle\;,\; \rangle : R^{\vee} \ot R \to \fie$. Assume that $\ydRveeHrat$ is $R^{\vee} \#H$-faithful.

Then the functor
$$(\Fu,\fu) : \ydRsmashrat \to \ydRveesmashrat$$
as defined below is a  braided monoidal isomorphism.

Let $M \in \ydRsmashrat$ with left  $R$-comodule structure denoted by
$$\delta_M^R  : M \to R \ot M,\; m \mapsto m\sws{-1} \ot m\sws0.$$
Let $\Fu(M)=M$ as an object in $\ydH$ by restriction, and $\Fu(M) \in \ydRveesmashrat$
with $R^{\vee}$-action and $R^{\vee}$-coaction $\delta_{\Fu(M)}^{R^{\vee}}$,
respectively, given by
\begin{align}
\xi m &= \langle \xi,m\sws{-1}\rangle m\sws0,\label{actions}\\
\delta_{\Fu(M)}^{R^{\vee}} (m) &= c^2_{R^{\vee},M}(m\swoss{-1} \ot m\swoss0),\label{coactionss}\\
\intertext{where}
rm &= \langle m\swoss{-1}, \theta_R(r) \rangle m\swoss0\label{coactionss1}
\end{align}
for all $m \in M, \xi \in R^{\vee}$ and $r \in R$. For any morphism $f$ in $\ydRsmashrat$ let $\Fu(f)=f$.
The natural transformation $\fu$ is defined by
\begin{align}
\fu_{M,N} : \Fu(M) \ot \Fu(N) \to \Fu(M \ot N),\; m \ot n \mapsto \cS^{-1} \cS_R(n\sws{-1})  m \ot n \sws0,\label{fu}
\end{align}
for all $M,N \in \ydRsmashrat$.
\end{theor}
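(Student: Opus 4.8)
The strategy is dictated by the section heading: I would define $(\Fu,\fu)$ as the composite of the braided monoidal isomorphism $(\widetilde{F},\widetilde{\varphi})$ of Theorem~\ref{theor:second} with the change-of-sides isomorphism furnished by Lemma~\ref{lem:leftright} applied to the bosonization itself. Since $R$ and $R^{\vee}$ have bijective antipodes, the antipode $\cS$ of $R\#H$ is bijective by \eqref{bigS2} and \eqref{ydiso}, so Lemma~\ref{lem:leftright}(2) with $B=C=R\#H$, $\rho=\id$ and $l=1$ yields a braided monoidal isomorphism $(S_1,\varphi):\rydRsmash\to\ydRsmash$. Let $(S_1^{-1},\psi):\ydRsmash\to\rydRsmash$ be its inverse; on an object $M$ it keeps the underlying space, sets the right action to $mb=\cS(b)m$ and the right $R\#H$-coaction to $m\mapsto m\swe0\ot\cS^{-1}(m\swe{-1})$, where $m\swe{-1}\ot m\swe0$ denotes the left $R\#H$-coaction of $M$. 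I then put $(\Fu,\fu)=(\widetilde{F},\widetilde{\varphi})\circ(S_1^{-1},\psi)$, so that $\Fu=\widetilde{F}\circ S_1^{-1}$ on objects and morphisms and $\fu$ is given by the composition rule \eqref{monoidalcomposition}.

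Before composing I would verify that $S_1^{-1}$ restricts to an isomorphism $\ydRsmashrat\to\rydRsmashrat$, that is, that it preserves rationality over $R$. On $S_1^{-1}(M)$ the right $R$-action is $m\triangleleft r=\cS(r)m=\cS(r\_{-1})\cS_R(r\_0)\cdot m$ by \eqref{bigS}, so it is obtained from the rational left $R$-action by composing with the bijections $\cS$, $\cS_R$ and the $H$-coaction of $R$; since $\cS$ is a homeomorphism for the weak topology (this is exactly the topology computation carried out in the proof of Lemma~\ref{lem:inversepair}), cofinite annihilators are carried to cofinite annihilators and rationality is preserved. Granting this, both factors are braided monoidal isomorphisms, hence so is $(\Fu,\fu)$, with inverse $S_1\circ\widetilde{G}$ where $\widetilde{G}$ is the inverse functor of Theorem~\ref{theor:second}; throughout I use the standing hypothesis that $\ydRveeHrat$ is $R^{\vee}\#H$-faithful. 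Finally a direct substitution shows that $\widetilde{F}\circ S_1^{-1}$ leaves the $H$-action and the $H$-coaction $(\pi\ot\id)\delta_M$ of $M$ unchanged, so indeed $\Fu(M)=M$ in $\ydH$.

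It remains to rewrite the structure maps of $\widetilde{F}(S_1^{-1}(M))$ through the definition of $S_1^{-1}$ and simplify. The $R^{\vee}$-action is immediate: feeding the $S_1^{-1}$-coaction $m\mapsto m\swe0\ot\cS^{-1}(m\swe{-1})$ into \eqref{tilde3} and using $\vartheta\cS\cS^{-1}=\vartheta$ gives $\xi m=\langle\xi,\vartheta(m\swe{-1})\rangle m\swe0=\langle\xi,m\sws{-1}\rangle m\sws0$, which is \eqref{actions}. The monoidal constraint is only a little more work: from \eqref{monoidalcomposition}, $\fu_{M,N}=\psi_{M,N}\circ\widetilde{\varphi}_{S_1^{-1}(M),S_1^{-1}(N)}$, and substituting the two coaction twists and then simplifying the $\pi$- and $\cS$-terms with \eqref{cosmash}, \eqref{bigSinvers}, the $H$-colinearity of $\cS_R$ and coassociativity collapses the composite to the single coefficient $(n\swe{-1})\sw2\,\pi\cS^{-1}\big((n\swe{-1})\sw1\big)=\cS^{-1}\cS_R(n\sws{-1})$, giving \eqref{fu}.

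The genuine obstacle is the $R^{\vee}$-coaction \eqref{coactionss}--\eqref{coactionss1}. Here one starts from the characterization \eqref{tilde4} of the $R^{\vee}\#H$-coaction of $\widetilde{F}(S_1^{-1}(M))$ in terms of the right action $m\triangleleft r=\cS(r)\cdot m$, rewrites $\cS(r)$ by \eqref{bigS}, and then converts the form $\langle\;,\;\rangle'$ back to $\langle\;,\;\rangle$ via \eqref{defin:inversepair}; the factor $\cS^2$ this introduces is absorbed into $\theta_R$ through \eqref{bigS2}, namely $\cS^2(r)=\cS_R^2(\theta_R(r))$, which is what makes $\theta_R$ appear in \eqref{coactionss1}. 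The $H$-coaction twists built into \eqref{tilde4} and into the definition \eqref{deltaRvee} of the $R^{\vee}$-comodule are precisely what, after repeated use of \eqref{theta1}, \eqref{theta2}, \eqref{ruleS5} and the Yetter-Drinfeld axiom, reorganize into the two applications of the braiding $c_{R^{\vee},M}$ recorded in \eqref{coactionss}. Matching the resulting intrinsic left-handed formula against \eqref{tilde4} is the one step that cannot be done by bare substitution, and I expect it to be the crux of the proof; by contrast, once $l=1$ is fixed every other structure map drops out by routine rewriting.
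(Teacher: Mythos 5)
Your proposal follows essentially the same route as the paper's proof: compose the isomorphism $(\widetilde{F},\widetilde{\varphi})$ of Theorem \ref{theor:second} with $(S_1^{-1},\psi)$ from Lemma \ref{lem:leftright}(2) applied to $R\#H$, verify that $S_1^{-1}$ preserves rationality via \eqref{bigS} and the compatibility of the antipodes with the pairing, and then rewrite the structure maps, with the coaction formula \eqref{coactionss}--\eqref{coactionss1} extracted from \eqref{tilde4} exactly as you describe (converting $\langle\;,\;\rangle'$ back via \eqref{defin:inversepair}, absorbing $\cS^2$ into $\theta_R$ by \eqref{bigS2}, and reorganizing the twists through \eqref{theta2}, \eqref{ruleS5}, \eqref{pair3} into the braidings) and with $\fu$ collapsing to the coefficient $a\sw2\,\pi\cS^{-1}(a\sw1)=\cS^{-1}\cS_R(n\sws{-1})$ just as in the paper. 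You correctly identified the one nontrivial step (the intrinsic reformulation of the coaction), and your sketch of it matches the paper's computation.
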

\begin{proof}
Let $(S_1^{-1}, \psi) : \ydRsmash \to \rydRsmash$ be the braided monoidal isomorphism
defined in Lemma \ref{lem:leftright} (2).
Let $M \in \ydRsmash$, and assume that $M$ is rational as a left $R$-module.
By definition, $S_1^{-1}(M) =M$ as a vector space, and $mr = \cS(r)m$ for all $m \in M,r \in R$,
where $\cS$ is the antipode of $R \# H$.
Let $m \in M$. Since $M$ is a rational left $R$-module, $E'^{\perp}m =0$ for some $E'\in \cE_{R^{\vee}}$.
Choose a subspace $E''\in \cE_{R^{\vee}}$ with $\cS_{R^{\vee}}(E') \sub E''$.
Then  $\cS(r)m=\cS(r\sw{-1}) \cS_R(r\sw0)m=0$ for all $r \in E''^{\perp}$ by \eqref{bigS} and \eqref{pair6}.
Hence $S_1^{-1}(M)$ is rational as a right $R$-module.

Thus $(S_1^{-1}, \psi)$ induces a functor  on the rational objects. We denote the induced functor again by
$$(S_1^{-1}, \psi) : \ydRsmashrat \to \rydRsmashrat.$$
Let
$$(\widetilde{F},\widetilde{\varphi}) : \rydRsmashrat \to \ydRveesmashrat$$
 be the braided monoidal isomorphism of Theorem \ref{theor:second}. Then the composition
\begin{align}\label{def:fu}
(\Fu,\fu) =(\widetilde{F},\widetilde{\varphi})(S_1^{-1}, \psi)
\end{align}
is a braided monoidal isomorphism.

 Let $M \in \ydRsmashrat$. The $R^{\vee} \# H$-coaction denoted by
 $$\delta_{\Fu(M)} : M \to R^{\vee} \# H \ot M,\; m \mapsto m\swoe{-1} \ot m\swoe0,$$
 is given by
\begin{align}
\cS(r)m= \langle r, \vartheta\cS^{-1}(m\swoe{-1})\rangle' m\swoe0\label{coactions}
\end{align}
for all $m \in M$ and $r \in R$.

Let
$$\delta_{\Fu(M)}^{R^{\vee}} = (\vartheta \ot \id)\delta_{\Fu(M)} : M \to R^{\vee} \ot M,\;m \mapsto m\swos{-1} \ot m\swos0,$$
be the $R^{\vee}$-coaction of $\Fu(M)$.

To prove \eqref{coactionss}, let $m \in M, r \in R$. Then by \eqref{coactions} and \eqref{bigS2},
\begin{align*}
\cS(r)m &= \langle \vartheta \cS^{-1}(m\swoe{-1}), \cS_R^2(\theta_R(r)) \rangle m\swoe0,
\end{align*}
hence
\begin{align*}
\cS_R(r) m &= \langle \vartheta \cS^{-1}(m\swoe{-1}), \cS^2_R(\theta_R(r\sw0) \rangle r\sw{-1} m\swoe0 &&\text{(by }\eqref{bigS})\\
&=\langle \cS^2_{R^{\vee}} \vartheta \cS^{-1}(m\swos{-1} {m\swos0}\sw{-1}),\theta_R(r\sw0) \rangle r\sw{-1} {m\swos0}\sw0 &&\text{(by }\eqref{pair6})\\
&=\langle \cS^{-1}({m\swos0}\sw{-1}) \lact \cS^2_{R^{\vee}} \vartheta \cS^{-1}(m\swos{-1}) ,\theta_R(r\sw0) \rangle r\sw{-1} {m\swos0}\sw0 &&\text{(by }\eqref{vartheta2})\\
&=\langle \cS^{-1}({m\swos0}\sw{-1}) \lact \cS_{R^{\vee}} (m\swos{-1}) ,\theta_R(r\sw0) \rangle r\sw{-1} {m\swos0}\sw0 &&\text{(by }\eqref{ruleS5})\\
&=\langle \cS^{-1}({m\swos0}\sw{-1}) \lact \cS_{R^{\vee}} (m\swos{-1}) ,\theta_R(r)\sw0 \rangle \cS^{-2}(\theta_R(r)\sw{-1}){m\swos0}\sw0 &&\text{(by }\eqref{theta2}).
\end{align*}
Since $\theta_R \cS_R^{-1} = \cS_R^{-1} \theta_R$, we obtain by \eqref{pair6}
\begin{align}\label{formulathird}
rm &= \langle \cS^{-1}({m\swos0}\sw{-1}) \lact m\swos{-1}, \theta_R(r)\sw0 \rangle \cS^{-2}(\theta_R(r)\sw{-1}) {m\swos0}\sw{0}.
\end{align}
Note that $c^{-1}_{R^{\vee},M}(m\swos{-1} \ot m\swos0) = {m\swos0}\sw0 \ot \cS^{-1}({m\swos0}\sw{-1}) \lact m\swos{-1}$. Hence by \eqref{formulathird} and \eqref{pair3},
\begin{align*}
rm &= \langle m\swoss{-1}, \theta_R(r) \rangle m\swoss0,
\end{align*}
where $m\swoss{-1} \ot m\swoss0 = c_{M,R^{\vee}}^{-1} c_{R^{\vee},M}^{-1}(m\swos{-1} \ot m\swos0)$.

Finally, by \eqref{def:fu} and \eqref{monoidalcomposition} the natural transformation $\fu$ is given by
\begin{align}\label{fuold}
\fu_{M,N} : \Fu(M) \ot \Fu(N) \to \Fu(M \ot N),\; m \ot n \mapsto n\swe{-1} \pi \cS^{-1}(n\swe{-2}) m \ot n \swe0,
\end{align}
for all $M,N \in \ydRsmashrat$, where
$$N \to R \# H \ot N,\; n \mapsto n\swe{-1} \ot n\swe0 = n\sws{-1} {n\sws0}\sw{-1} \ot {n\sws0}\sw0,$$
denotes the $ R \# H$- coaction of $N$. Let $r \in R, h \in H$ and $a = rh \in R \# H$. Then
\begin{align*}
a\sw2 \pi\cS^{-1}(a\sw1) &= \varepsilon(h) r\sw2 \pi\cS^{-1}(r\sw1)&&\\
&= \varepsilon(h) {r\swo2}\sw0 \pi\cS^{-1}(r\swo1 {r\swo2}\sw{-1})&&\\
&= \varepsilon(h) r\sw0 \cS^{-1}(r\sw{-1})&&\\
&= \varepsilon(h) \cS^{-1}\cS_R(r). &&\text{by \eqref{antipode}}
\end{align*}
Hence \eqref{fu} follows from \eqref{fuold}.
\end{proof}

We specialize the last theorem to the case of $\no$-graded dual pairs of braided Hopf algebras in $\ydH$.

Let $R = \oplus_{n\geq 0} R(n)$ be an $\no$-graded Hopf algebra in $\ydH$. We view the bosonization $R \# H$ as an $\no$-graded Hopf algebra with $\deg R(n)=n$  for all $n\geq 0$, and $\deg H =0$.

For any Yetter-Drinfeld module $W \in \ydRsmash$ we define two ascending filtrations of Yetter-Drinfeld modules in $\ydH$ by
\begin{align}
\Fd n W&= \{ w \in W \mid \delta^R_W(w) \in \oplus_{i=0}^n R(i) \ot W \},\\
\Fm n W&= \{ w \in W \mid R(i) w = 0 \text{ for all } i > n \}
\end{align}
for all $n \geq 0$. Then
$\cup_{n \geq 0} \Fd n W = W$. But in general,
$\cup_{n \geq 0} \Fm n W \neq W$.

Given an abelian monoid $\Gamma $ and a $\Gamma $-graded Hopf algebra $A$
with bijective antipode, we say that $M \in \ydA $ is $\Gamma $-\textit{graded}
if $M=\oplus_{\gamma \in \Gamma }M(\gamma )$
is a vector space grading and if the module and comodule maps of $M$ are $\Gamma $-graded
of degree $0$.

\begin{corol}\label{cor:third}
Let $R^{\vee}= \oplus_{n \geq 0} R^{\vee}(n)$ and $R= \oplus_{n \geq 0} R(n)$ be $\no$-graded Hopf algebras in $\ydH$ with finite-dimensional components $R^{\vee}(n)$ and $R(n)$ for all $n \geq 0$, and let $\langle \;,\; \rangle : R^{\vee} \ot R \to \fie$ be a  bilinear form of vector spaces satisfying \eqref{pair1} -- \eqref{pair5} and \eqref{gradedpair}. Then the functor
$$(\Fu,\fu) : \ydRsmashrat \to \ydRveesmashrat$$
as defined in Theorem \ref{theor:third} is a  braided monoidal isomorphism.

Moreover, the following hold.

\begin{enumerate}
\item A left $R$- (respectively $R^{\vee}$)-module $M$ is rational if and only if for any $m \in M$ there is a natural number $n_0$ such that $R(n)m =0$ (respectively $R^{\vee}(n)m =0$) for all $n \geq n_0$.
\item Let $M \in \ydRsmashrat$ be $\mZ$-graded. Then $\Omega(M)$ is a $\mZ$-graded object in $\ydRveesmashrat$ with $\Omega(M)(n) = M(-n)$ for all $m \in \mZ$.
\item For any $M \in \ydRsmashrat$ and $n \geq 0$,
\begin{align*}
\Fm n \Fu(W) = \Fd n W,\;
\Fd n \Fu(W) = \Fm n W.
\end{align*}

\end{enumerate}
\end{corol}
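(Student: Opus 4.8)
The plan is to deduce everything from Theorem~\ref{theor:third}, after checking its two hypotheses. By Example~\ref{exa:gradedpair}, the assumptions \eqref{pair1}--\eqref{pair5} together with \eqref{gradedpair} already make $(R,R^{\vee})$ with $\langle\;,\;\rangle$ a dual pair of Hopf algebras in $\ydH$, with the finite subobjects $\cF_nR$, $\cF_nR^{\vee}$ serving as the cofinal families; that same example shows the antipodes of $R\#H$, $R^{\vee}\#H$, $R$ and $R^{\vee}$ are all bijective. For the remaining hypothesis I would apply Examples~\ref{exa:regularrep}(2) to $A=R^{\vee}\#H$: the quotients $A/\cF^nA$ are annihilated as $R^{\vee}$-modules by $\oplus_{i\geq n}R^{\vee}(i)$, hence are rational and lie in $\ydRveeHrat$, and they separate elements of $A$ and of $A\ot A$. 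This is precisely $R^{\vee}\#H$-faithfulness of $\ydRveeHrat$, so Theorem~\ref{theor:third} yields the braided monoidal isomorphism $(\Fu,\fu)$.

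Part (1) is a direct translation of the definition of rationality. By \eqref{gradedperp} one has $(\cF_{n-1}R^{\vee})^{\perp}=\cF^nR=\oplus_{i\geq n}R(i)$, and the $\cF_nR^{\vee}$ are cofinal among finite-dimensional subspaces of $R^{\vee}$; hence an element $m$ is killed by $E'^{\perp}$ for some finite-dimensional $E'\sub R^{\vee}$ if and only if it is killed by some $\cF^{n_0}R$, i.e.\ $R(n)m=0$ for all $n\geq n_0$. The $R^{\vee}$-case is symmetric.

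Parts (2) and (3) share one mechanism. By \eqref{pair1} and \eqref{gradedpair} the form restricts, for each $i$, to a nondegenerate pairing $R^{\vee}(i)\ot R(i)\to\fie$ of finite-dimensional spaces, so $R^{\vee}(i)\cong R(i)^{*}$. I would also record that all relevant maps are graded of degree $0$: the $H$-action and $H$-coaction on $R$, $R^{\vee}$ and on a $\mZ$-graded $M$ preserve degree, so $c^2_{R^{\vee},M}$ preserves the $\mZ$-degree of each tensor factor, while $\theta_R$, being built from the degree-$0$ $H$-action and $H$-coaction of $R$, preserves the grading and is bijective on each $R(i)$ by \eqref{ydiso}. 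For part (2): formula \eqref{actions} shows $\xi\in R^{\vee}(j)$ pairs only against the degree-$j$ part of $\delta^R_M$ and sends $M(n)$ to $M(n-j)$, and \eqref{coactionss1} forces the auxiliary map $m\mapsto m\swoss{-1}\ot m\swoss0$ to send $M(n)$ into $\oplus_k R^{\vee}(k)\ot M(n+k)$, which $c^2_{R^{\vee},M}$ preserves; relabelling $\Fu(M)(n)=M(-n)$ turns both the $R^{\vee}$-action and the $R^{\vee}$-coaction into degree-$0$ maps, and membership in $\ydRveesmashrat$ is automatic.

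For part (3) I would argue componentwise with the perfect pairings $R^{\vee}(i)\cong R(i)^{*}$. Writing $\delta^R_W(w)=\sum_i d_i$ with $d_i\in R(i)\ot W$, formula \eqref{actions} and \eqref{gradedpair} show $R^{\vee}(i)w$ depends only on $d_i$, and nondegeneracy gives $R^{\vee}(i)w=0$ iff $d_i=0$; summing over $i>n$ yields $\Fm n\Fu(W)=\Fd nW$. Dually, writing the auxiliary coaction $m\swoss{-1}\ot m\swoss0=\sum_i e_i$ with $e_i\in R^{\vee}(i)\ot W$, the degree-preservation of $c^2_{R^{\vee},W}$ shows $\delta^{R^{\vee}}_{\Fu(W)}(w)\in\oplus_{i\leq n}R^{\vee}(i)\ot W$ iff $e_i=0$ for all $i>n$; and by \eqref{coactionss1}, $R(i)w$ depends only on $e_i$, with $R(i)w=0$ iff $e_i=0$ because $\theta_R$ is a grading-preserving bijection on $R(i)$ and the pairing is nondegenerate. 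This gives $\Fd n\Fu(W)=\Fm nW$. I expect this last step to be the main obstacle: one must match the implicitly defined $R^{\vee}$-coaction \eqref{coactionss1} to the original $R$-action and verify that the double braiding leaves the $R^{\vee}$-degree intact, which is exactly what licenses the componentwise nondegeneracy argument.
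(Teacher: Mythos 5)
Your proof is correct and follows the paper's own route exactly: the paper likewise verifies the dual-pair hypothesis via Example~\ref{exa:gradedpair} and the $R^{\vee}\#H$-faithfulness of $\ydRveeHrat$ via Example~\ref{exa:regularrep}(2) before invoking Theorem~\ref{theor:third}. The paper leaves (1)--(3) as routine checks using \eqref{actions} and \eqref{coactionss}, and your componentwise nondegeneracy argument with the graded bijection $\theta_R$ and the $R^{\vee}$-degree-preserving double braiding is precisely the verification intended.
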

\begin{proof}
By Example \ref{exa:gradedpair}, the antipodes of $R$ and of $R^{\vee}$ are bijective,
and $(R, R^{\vee})$ together with $\langle\;,\; \rangle$ is a dual pair of Hopf algebras in $\ydH$.
By Example \ref{exa:regularrep} (2), the category $\ydRveeHrat$ is $R^{\vee} \#H$-faithful.
Thus $(\Fu,\fu)$ is a braided monoidal isomorphism by Theorem \ref{theor:third}.

(1) is clear from Example \ref{exa:gradedpair}, and (2) and (3)  can be checked using \eqref{actions} and \eqref{coactionss}.
\end{proof}

\begin{propo}\label{propo:Fdm}
Let $R = \oplus_{n\geq 0} R(n)$ be an $\no$-graded Hopf algebra in $\ydH$ with finite-dimensional components $R(n)$ for all $n\geq 0$. Let $W$ be an irreducible object in the category of  $\mathbb{Z}$-graded left Yetter-Drinfeld modules over $R \# H$. Assume that $W$ is locally finite as an $R$-module, or equivalently finite-dimensional.
Let $n_0 \leq n_1$ in $\ndZ$, and  $W = \oplus_{i=n_0}^{n_1} W(i)$ be the decomposition into homogeneous components such that $W( n_0) \neq 0, W(n_1) \neq 0$.
Then
\begin{align}\label{Fdm}
\Fd n W = \mathop{\oplus} _{i=n_0}^{n_0 + n} W(i),\quad \Fm n W = \mathop{\oplus} _{i=n_1 -n}^{n_1} W(i)
\end{align}
 for all $n \geq 0$.
Moreover, $W(n_0)$ and $W(n_1)$ are irreducible Yetter-Drinfeld modules over $R(0)\#H$,
where the action and coaction arise from the action and coaction of $R\#H$ on $W$
by restriction and projection, respectively.
\end{propo}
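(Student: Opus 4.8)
The plan is to show that both filtrations are forced to agree with the grading, the inclusions $\oplus_{i=n_0}^{n_0+n}W(i)\sub\Fd nW$ and $\oplus_{i=n_1-n}^{n_1}W(i)\sub\Fm nW$ being the easy halves and irreducibility the source of the reverse inclusions. First I would record that all structure maps are graded: since $W$ is $\mZ$-graded over the $\no$-graded Hopf algebra $R\#H$ (with $\deg R(n)=n$, $\deg H=0$) and $\pi,\vartheta$ are graded, one has $R(i)W(j)\sub W(i+j)$ and $\delta^R_W(W(j))\sub\oplus_{i\ge0}R(i)\ot W(j-i)$. In particular $\Fd nW$ and $\Fm nW$ are graded subspaces. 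For $w\in W(j)$ the component of $\delta^R_W(w)$ in $R(i)\ot W(j-i)$ can be nonzero only when $n_0\le j-i$, i.e. $i\le j-n_0$, which yields $\Fd nW\supseteq\oplus_{i=n_0}^{n_0+n}W(i)$; and for $j\ge n_1-n$ and $i>n$ one has $R(i)W(j)\sub W(i+j)\sub W(>n_1)=0$, which yields $\Fm nW\supseteq\oplus_{i=n_1-n}^{n_1}W(i)$.

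For the reverse inclusion for $\Fd$ I would use a single subobject. Put $W'=\oplus_{i>n_0}W(i)$; since $R$ lives in degrees $\ge0$, $W'$ is an $R\#H$-submodule. Let $\delta_W(w)=w\swe{-1}\ot w\swe0$ denote the $R\#H$-coaction, and set $K=\{w\in W:\delta_W(w)\in(R\#H)\ot W'\}$, the largest subcomodule of $W$ contained in $W'$. Writing $\delta_W(w)=\sum_\alpha c_\alpha\ot w_\alpha$ with the $c_\alpha$ linearly independent, coassociativity shows each $w_\alpha\in K$, so $K$ is a subcomodule; and for $w\in K$, $a\in R\#H$, the Yetter-Drinfeld identity \eqref{relativeYD}, namely $\delta_W(aw)=a\sw1 w\swe{-1}\cS(a\sw3)\ot a\sw2 w\swe0$, has second tensorand $a\sw2 w\swe0\in(R\#H)W'\sub W'$, so $K$ is a submodule as well. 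Thus $K$ is a graded subobject with $K\sub W'\neq W$, whence $K=0$ by irreducibility. Equivalently the leading-coaction map $\ell=(\id\ot p_{n_0})\delta^R_W:W\to R\ot W(n_0)$, where $p_{n_0}:W\to W(n_0)$ is the projection, is injective; hence for $0\neq w\in W(j)$ the component of $\delta^R_W(w)$ in $R(j-n_0)\ot W(n_0)$ is nonzero, so $w\in\Fd nW$ forces $j-n_0\le n$. This proves the first equality in \eqref{Fdm}. The same subobject idea settles irreducibility of $W(n_0)$ over $R(0)\#H$: it is an $R\#H$-subcomodule (its coaction lands in $(R(0)\#H)\ot W(n_0)$) and an $R(0)\#H$-submodule, carrying the stated restricted action and projected coaction; if $0\neq U\sub W(n_0)$ is an $R(0)\#H$-subobject then $U$ is an $R\#H$-subcomodule, so by the computation used for $K$ the submodule $(R\#H)U$ is again a subobject, hence $=W$, and intersecting with degree $n_0$ gives $(R\#H)U\cap W(n_0)=R(0)U=U$, so $U=W(n_0)$.

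The module filtration and the top-component irreducibility are the dual statements, and the cleanest route is to transport them to the settled bottom statements through $\Fu$. Take $R^\vee=\oplus_n R(n)^*$ with the evaluation pairing, a graded dual pair with $R$ by Example \ref{exa:gradedpair}, so that $(\Fu,\fu):\ydRsmashrat\to\ydRveesmashrat$ is a braided monoidal isomorphism by Theorem \ref{theor:third}. Since $\Fu$ is an isomorphism of categories preserving the grading (Corollary \ref{cor:third}(2), $\Fu(W)(m)=W(-m)$), $\Fu(W)$ is an irreducible, finite-dimensional, $\mZ$-graded Yetter-Drinfeld module over $R^\vee\#H$ with bottom degree $-n_1$ and top degree $-n_0$. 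Applying the already-proved first equality of \eqref{Fdm} to $\Fu(W)$ and using $\Fd n\Fu(W)=\Fm nW$ from Corollary \ref{cor:third}(3) gives $\Fm nW=\oplus_{m=-n_1}^{-n_1+n}\Fu(W)(m)=\oplus_{j=n_1-n}^{n_1}W(j)$, which is the second equality. Likewise $W(n_1)=\Fu(W)(-n_1)$ is the bottom component of $\Fu(W)$, hence irreducible over $R^{\vee}(0)\#H$ by the previous paragraph; and by the formulas \eqref{actions}--\eqref{coactionss1} the $R^{\vee}(0)$-action (resp. coaction) on this component is induced by the $R(0)$-coaction (resp. action) coming from $W$, so by the degree-$0$ instance of Proposition \ref{propo:rational} the $R(0)\#H$- and $R^{\vee}(0)\#H$-subobjects of $W(n_1)$ are the same subspaces. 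Thus $W(n_1)$ is irreducible over $R(0)\#H$ as well.

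The main obstacle is the genuine asymmetry between the module and the comodule structure. The bottom assertions admit clean one-step subobject proofs, because the $R\#H$-submodule generated by an $R\#H$-subcomodule is automatically a subobject (the second-tensorand computation above), whereas the dual top assertions are not reachable by the same direct manipulation. Routing them through $\Fu$, the very functor that interchanges the $R$-action with the $R^\vee$-coaction and reverses the grading, is what makes them provable without re-deriving that duality by hand; verifying the grading compatibility and the degree-$0$ dual-pair correspondence is the only extra bookkeeping required.
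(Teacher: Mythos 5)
Your proof is correct, and it relates to the paper's in two different ways. For the first equality in \eqref{Fdm} and the irreducibility of $W(n_0)$ you argue essentially as the paper does: the paper picks a nonzero homogeneous $w\in W(l)\cap \Fd n W$ with $l>n_0+n$ and observes that the Yetter-Drinfeld submodule it generates lies in $\oplus_{i>n_0}W(i)$, while you show that the largest subcomodule $K$ contained in $W'=\oplus_{i>n_0}W(i)$ is a Yetter-Drinfeld subobject and hence zero, reformulated as injectivity of the leading-coaction map $(\id\ot p_{n_0})\delta^R_W$ --- the same mechanism (the identity \eqref{relativeYD} makes the submodule generated by a subcomodule, resp.\ the largest subcomodule inside a submodule, a subobject), just packaged globally instead of elementwise; your $W(n_0)$ argument coincides with the paper's. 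For the second equality and $W(n_1)$, however, you take a genuinely different route: the paper dualizes the argument by hand (``similar'', and for $W(n_1)$: $W$ is the $R\#H$-comodule generated by any nonzero $R(0)\#H$-subobject of $W(n_1)$), whereas you transport the already-proved bottom statements through $(\Fu,\fu)$ for a graded dual pair, invoking Corollary \ref{cor:third}(2),(3); this is conceptually attractive, since it makes explicit that the two halves of \eqref{Fdm} are exchanged by $\Fu$, exactly as Corollary \ref{cor:third}(3) indicates, and there is no circularity because Corollary \ref{cor:third} precedes the proposition and is proved independently of it. What it costs is the input datum: you need $R^{\vee}=\oplus_n R(n)^*$ to exist as an $\no$-graded Hopf algebra in $\ydH$ with the evaluation pairing satisfying \eqref{pair1}--\eqref{pair5} and \eqref{gradedpair}, and your citation of Example \ref{exa:gradedpair} does not supply this --- that example takes the pair $(R,R^{\vee})$ as \emph{given} and verifies the dual-pair axioms; the paper itself constructs such duals only for Nichols algebras, via \cite[Prop.\,1.10]{a-AHS10}. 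The construction of the graded dual of a graded braided Hopf algebra with finite-dimensional components in the rigid category $\ydHf$ is standard, so this is a citation gap rather than a mathematical one, but it should be acknowledged; by contrast, the paper's direct dual argument is self-contained and needs no duality machinery at all. Finally, your identification of $R(0)\#H$-subobjects with $R^{\vee}(0)\#H$-subobjects of the bottom component of $\Fu(W)$ via \eqref{actions}--\eqref{coactionss1} is correct but compressed: one should note that the twists $c^2_{R^{\vee},M}$ and $\theta_R$ appearing in \eqref{coactionss} and \eqref{coactionss1} preserve $H$-subobjects, so stability under the four structure maps indeed transfers, giving the degree-zero instance of Proposition \ref{propo:rational} you appeal to.
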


 \begin{proof}
The inclusions $\supseteq$ in \eqref{Fdm} follow from the definitions
since $W$ is a $\ndZ$-graded Yetter-Drinfeld module. On the other hand, assume that
$\Fd n W \neq \mathop{\oplus} _{i=n_0}^{n_0 + n} W(i)$ for some $n \geq 0$. Then there exist $l > n_0 + n$ and $w \in W(l) \cap \Fd n (W)$ with $w \neq 0$, since $W$ is a $\ndZ$-graded Yetter-Drinfeld module. Then the Yetter-Drinfeld submodule of $W$ generated by $w$ is contained in $\oplus_{n > n_0} W(n)$. This is a contradiction to $W(n_0) \neq 0$ and the irreducibility of $W$. The proof of the second equation in \eqref{Fdm} is similar.
By degree reasons, $W(n_0)$ is a Yetter-Drinfeld module over $R(0)\#H$
in the way explained in the claim. It is irreducible, since $W$ is irreducible
and hence it is the $R\#H$-module generated by any nonzero Yetter-Drinfeld submodule
over $R(0)\#H$ of $W(n_0)$.
Similarly, $W(n_1)$ is an irreducible Yetter-Drinfeld module over $R(0)\#H$,
since $W$ is the $R\#H$-comodule generated by any nonzero Yetter-Drinfeld submodule
over $R(0)\#H$ of $W(n_1)$.
\end{proof}

Let $R$ be a braided Hopf algebra in $\ydH$, and let $K$ be a Hopf algebra in
$\ydRsmash$. Then
\begin{align*}
K \# R := (K \# (R \# H))^{\co H}
\end{align*}
denotes the braided Hopf algebra in $\ydH$ of $H$-coinvariant elements with
respect to the canonical projection $K \# (R \# H) \to R \# H \to H$.

\begin{corol} \label{corol:Hopfproj}
In the situation of Theorem \ref{theor:third} assume that $R $ is a Hopf subalgebra  of a Hopf algebra $B$ in $\ydH$ with a Hopf algebra projection onto $R$, and let $K:=B^{\co R}$.
\begin{enumerate}
\item $K= (B \# H)^{\co R \# H}$ is a Hopf algebra in $\ydRsmash $, and the multiplication map $K \# R \to B$ is an isomorphism of Hopf algebras in $\ydH$. \item Assume that $K$ is rational as an $R$-module. Then $\Fu(K) \# R^{\vee}$ is a Hopf algebra in $\ydH$ with a Hopf algebra projection onto $R^{\vee}$.
\end{enumerate}
\end{corol}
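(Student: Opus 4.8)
The plan is to deduce part (1) from the Radford--Majid theory of Hopf algebras with a projection, applied to the bosonizations, and to obtain part (2) by transporting the resulting Hopf algebra through the braided monoidal isomorphism $(\Fu,\fu)$ of Theorem~\ref{theor:third}.

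For part~(1), I would first bosonize the given data. Since $\rho\colon B\to R$ is a Hopf algebra map in $\ydH$ splitting the inclusion $R\hookrightarrow B$, the induced map $\rho\#\id_H\colon B\#H\to R\#H$ is a homomorphism of ordinary Hopf algebras splitting the inclusion $R\#H\hookrightarrow B\#H$. By the Radford--Majid theorem on Hopf algebras with a projection (Section~\ref{sec:bosonization}), the algebra $L:=(B\#H)^{\co R\#H}$ of right coinvariants is a Hopf algebra in $\ydRsmash$, and the multiplication map $L\#(R\#H)\to B\#H$ is an isomorphism of ordinary Hopf algebras. The essential step is then to identify $L$ with $K$. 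Since the projection $B\#H\to R\#H\to H$ equals the canonical projection $\pi\colon B\#H\to H$, every $R\#H$-coinvariant element is $H$-coinvariant, so $L\sub(B\#H)^{\co H}=B$. For $b\in B$, the cosmash formula \eqref{cosmash} together with the $H$-colinearity of $\rho$ shows that the linear map $\Theta\colon B\ot R\to B\#H\ot R$, $y\ot x\mapsto y\,x\sw{-1}\ot x\sw0$, carries the braided coinvariance condition $b\swo1\ot\rho(b\swo2)=b\ot 1$ to the ordinary one $b\sw1\ot(\rho\#\id_H)(b\sw2)=b\ot 1$. As $\Theta$ admits the left inverse $u\ot x\mapsto u\,\cS(x\sw{-1})\ot x\sw0$, it is injective, so the two conditions are equivalent and $L=B^{\co R}=K$.

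Thus $K$ is a Hopf algebra in $\ydRsmash$ and $B\#H\cong K\#(R\#H)$ as ordinary Hopf algebras, the isomorphism being multiplication. Taking $H$-coinvariants on both sides with respect to the projections onto $H$ — which agree under this isomorphism because the counits of $K$ and of $R$ are restrictions of that of $B$ — yields $B=(B\#H)^{\co H}\cong(K\#(R\#H))^{\co H}=K\#R$, where the last equality is the definition of $K\#R$ recalled before the corollary. The resulting map $K\#R\to B$ is the multiplication map $k\#r\mapsto kr$; being the restriction to $\co H$-parts of a Hopf algebra isomorphism compatible with the $H$-projections, it is an isomorphism of (braided) Hopf algebras in $\ydH$.

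For part~(2), the hypothesis says precisely that $K\in\ydRsmashrat$. By Theorem~\ref{theor:third}, $(\Fu,\fu)$ is a braided monoidal isomorphism, hence it sends the Hopf algebra $K$ to a Hopf algebra $\Fu(K)$ in $\ydRveesmashrat$, with structure maps transported through the natural isomorphisms $\fu$. In particular $\Fu(K)$ is a Hopf algebra in $\ydRveesmash$, so the bosonization $\Fu(K)\#R^{\vee}=(\Fu(K)\#(R^{\vee}\#H))^{\co H}$ is a braided Hopf algebra in $\ydH$ by the construction recalled above, applied with $R^{\vee}$ in place of $R$. Finally, the ordinary bosonization $\Fu(K)\#(R^{\vee}\#H)$ carries the Hopf algebra projection $\varepsilon_{\Fu(K)}\#\id$ onto its Hopf subalgebra $R^{\vee}\#H$; this projection commutes with the projections onto $H$ and therefore restricts to a Hopf algebra projection $\Fu(K)\#R^{\vee}\to(R^{\vee}\#H)^{\co H}=R^{\vee}$ in $\ydH$, splitting the inclusion $R^{\vee}\hookrightarrow\Fu(K)\#R^{\vee}$. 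The main obstacle is the identification $(B\#H)^{\co R\#H}=B^{\co R}$ in part~(1): it encodes the transitivity of bosonization along the tower $H\sub R\#H\sub B\#H$ and requires matching braided coinvariants over $R$ with ordinary coinvariants over $R\#H$. Once this is settled, both statements follow formally from the Radford--Majid theorem and from the fact that $(\Fu,\fu)$ preserves Hopf algebras.
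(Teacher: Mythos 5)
Your proof is correct and follows essentially the same route as the paper: for part (2) you give exactly the paper's argument (the braided monoidal isomorphism $(\Fu,\fu)$ of Theorem \ref{theor:third} carries the Hopf algebra $K$ to a Hopf algebra $\Fu(K)$ in $\ydRveesmashrat$, whose bosonization with $R^{\vee}$ then has the evident projection onto $R^{\vee}$), while for part (1) the paper simply cites \cite[Lemma 3.1]{a-AHS10}, whose content --- Radford--Majid applied to $\rho\#\id_H : B\#H \to R\#H$ together with the identification $(B\#H)^{\co R\#H}=B^{\co R}$ --- is precisely what you prove directly. Your injectivity argument for the map $\Theta\colon y\ot x\mapsto y\,x\sw{-1}\ot x\sw0$ (with left inverse $u\ot x\mapsto u\,\cS(x\sw{-1})\ot x\sw0$) correctly settles that identification, so your write-up is a valid self-contained substitute for the citation.
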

\begin{proof}
(1) is shown in  \cite[Lemma 3.1]{a-AHS10}. By Theorem \ref{theor:third}, $\Fu(K)$ is a Hopf algebra in $\ydRveesmash $. This proves (2).
\end{proof}

\section{An application to Nichols algebras}\label{sec:Nichols}

In the last section we want to apply the construction in Corollary~\ref{corol:Hopfproj}
to Nichols algebras. We show in Theorem~\ref{theor:NtoN} that if $B$
is a Nichols algebra of a semisimple Yetter-Drinfeld module,
then the Hopf algebra $\Fu(K) \# R^{\vee}$ constructed in Corollary~\ref{corol:Hopfproj} is again a Nichols algebra.
The advantage of the construction is that the new Nichols algebra is usually not
twist equivalent to the original one.

 We start with some
general observations.

\begin{remar}\label{rem:primitive}
Let $R = \oplus_{n \in \no} R(n)$ be an $\no$-graded bialgebra in $\ydH$.

(1) The space
$$P(R) = \{ x \in R \mid \Delta_R(x) = 1 \ot x + x \ot 1 \}$$
of primitive elements of $R$ is an $\no$-graded subobject of $R$ in $\ydH$, since it is the kernel of the graded, $H$-linear and $H$-colinear map
$$R \to R \ot R,\; x \mapsto \Delta_R(x) - 1 \ot x - x \ot 1.$$

(2) Assume that $R(0) = \fie$. Then $R(1) \sub P(R)$. Moreover, $R$ is an $\no$-graded braided Hopf algebra in $\ydH$.
\end{remar}

Let $M \in \ydH$. A {\em pre-Nichols algebra} \cite{a-Masuo08} of $M$ is an $\no$-graded braided bialgebra $R = \oplus_{n \geq \no} R(n)$ in $\ydH$ such that
\begin{enumerate}
\item [(N1)]$R(0) = \fie$,
\item [(N2)]$R(1) = M$,
\item [(N3)]$R$ is generated as an algebra by $M$.
\end{enumerate}
The Nichols algebra of $M$ is a pre-Nichols algebra $R$ of $M$ such that
\begin{enumerate}
\item [(N4)]$P(R) \cap R(n) = 0$ for all $n \geq 2$.
\end{enumerate}
It is denoted by $\NA(M)$. Up to isomorphism, $\NA(M)$ is uniquely determined by $M$. By Remark \ref{rem:primitive}, our definition of $\NA(M)$ coincides with \cite[Def.~2.1]{inp-AndrSchn02}.
The Nichols algebra $\NA(M)$ has the following {\em universal property}:

For any pre-Nichols algebra $R$ of $M$ there is exactly one map
$$\rho : R \to \NA(M),\; \rho \res M = \id,$$
of $\no$-graded braided bialgebras in $\ydH$. Thus $\NA(M)$ is the smallest pre-Nichols algebra of $M$.

In the situation of Theorem \ref{theor:third}, the functor
$$(\Fu,\fu) : \ydRsmashrat \to \ydRveesmashrat$$
is a braided monoidal isomorphism. Hence for any $\no$-graded braided bialgebra $B$ in $\ydRsmashrat$ with multiplication $\mu_B$  and comultiplication $\Delta_B$, the image $\Fu(B)$ is an $\no$-graded braided bialgebra in $\ydRveesmashrat$ with multiplication
$$\Fu(B) \ot \Fu(B) \xrightarrow{\fu_{B,B}} \Fu(B \ot B) \xrightarrow{\Fu(\mu_B)} \Fu(B)$$
and comultiplication
$$\Fu(B) \xrightarrow{\Fu(\Delta_{B})} \Fu(B \ot B) \xrightarrow{\fu_{B,B}^{-1}}\Fu(B) \ot \Fu(B).$$
The unit elements and the augmentations in $B$ and $\Fu(B)$ coincide.

\begin{corol}\label{cor:Nichols}
Under the assumptions of Theorem \ref{theor:third}, let $M \in \ydRsmashrat$. Then
$$\Omega(\NA(M)) \cong \NA(\Omega(M))$$
as $\no$-graded braided Hopf algebras in $\ydRveesmashrat$.
\end{corol}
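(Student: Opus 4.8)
The plan is to verify that $\Fu(\NA(M))$ is the Nichols algebra of $\Fu(M)$ by checking the defining conditions (N1)--(N4), and then to invoke the uniqueness of the Nichols algebra. Before this can even be stated, one must know that $\NA(M)$ lies in the domain category $\ydRsmashrat$: since $M$ is rational as an $R$-module and, by Lemma \ref{lem:rationaltensor}, the rational objects form a monoidal subcategory of $\ydRsmash$ closed under tensor products, every tensor power $M^{\ot k}$ is rational, and hence so is the graded component $\NA(M)(k)$, being a quotient of $M^{\ot k}$. Thus $\NA(M)$ is an $\no$-graded braided Hopf algebra in $\ydRsmashrat$, and by the discussion preceding the corollary its image $\Fu(\NA(M))$ is an $\no$-graded braided bialgebra in $\ydRveesmashrat$ whose unit and augmentation agree with those of $\NA(M)$; since $(\Fu,\fu)$ is a braided monoidal \emph{isomorphism}, it carries the antipode of $\NA(M)$ to an antipode, so $\Fu(\NA(M))$ is in fact a braided Hopf algebra. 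Throughout, the relevant $\no$-grading is the Nichols (tensor-degree) grading, which is distinct from the grading of $R$.

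Next I would check the axioms. Because $\Fu$ is the identity on underlying $\ydH$-objects and on morphisms, and $\fu$ preserves the Nichols grading (the coaction $\delta^R$ and the $R \# H$-action stabilize each homogeneous component $\NA(M)(k)$), the $\no$-grading of $\Fu(\NA(M))$ agrees componentwise with that of $\NA(M)$. Hence (N1) $\Fu(\NA(M))(0) = \fie$ and (N2) $\Fu(\NA(M))(1) = \Fu(\NA(M)(1)) = \Fu(M)$ are immediate. For (N3) I would use that an isomorphism of categories preserves surjectivity of the iterated multiplication maps $M^{\ot k} \to \NA(M)(k)$; transporting these along the isomorphisms $\fu$ shows that the multiplication maps $\Fu(M)^{\ot k} \to \Fu(\NA(M))(k)$ are surjective, so $\Fu(\NA(M))$ is generated in degree one by $\Fu(M)$.

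The crux is (N4), the absence of primitives in degrees $\geq 2$. Here one computes that $\fu$, and therefore $\fu^{-1}$, fixes the elements $1 \ot x$ and $x \ot 1$: indeed $\fu_{B,B}(m \ot 1) = \cS^{-1}\cS_R(1)\,m \ot 1 = m \ot 1$, while $\fu_{B,B}(1 \ot n) = \cS^{-1}\cS_R(n\sws{-1}) \cdot 1 \ot n\sws0 = \varepsilon_R(n\sws{-1})\,1 \ot n\sws0 = 1 \ot n$, using that the unit of the braided Hopf algebra is annihilated by the augmentation ideal together with the counit axiom. Since the comultiplication of $\Fu(\NA(M))$ is $\fu^{-1}_{B,B}\,\Fu(\Delta_{\NA(M)})$ and $\Fu$ is the identity on maps, an element is primitive in $\Fu(\NA(M))$ if and only if it is primitive in $\NA(M)$; thus $P(\Fu(\NA(M))) \cap \Fu(\NA(M))(k) = P(\NA(M)) \cap \NA(M)(k) = 0$ for all $k \geq 2$. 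This establishes that $\Fu(\NA(M))$ is a pre-Nichols algebra of $\Fu(M)$ satisfying (N4), and the uniqueness of the Nichols algebra yields $\Fu(\NA(M)) \cong \NA(\Fu(M))$ as $\no$-graded braided Hopf algebras in $\ydRveesmashrat$. The main obstacle is precisely this grading-and-primitivity bookkeeping: once one knows that $\fu$ is grading-preserving and fixes the unit tensors, the argument reduces to transporting the universal property of $\NA(M)$ across the isomorphism.
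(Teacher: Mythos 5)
Your proof is correct and follows essentially the same route as the paper's: the paper likewise first deduces rationality of $\NA(M)$ from generation in degree one together with \eqref{pair8} (the content of Lemma \ref{lem:rationaltensor}), and then observes that since $\Fu$ is the identity on underlying objects and morphisms, the axioms (N1)--(N4) transfer to $\Fu(\NA(M))$. Your explicit verification that $\fu_{B,B}$ is grading-preserving and fixes $1 \ot x$ and $x \ot 1$ (so that primitivity, hence (N4), is preserved) merely spells out details the paper leaves implicit.
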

\begin{proof}
By (N3) and \eqref{pair8}, $\NA(M)$ is rational as an $R$-module, since $M$ is rational. By Theorem \ref{theor:third}, $(\Fu,\fu)$ is a braided monoidal isomorphism. Hence $\NA(M)$ is an $\no$-graded braided bialgebra  in $\ydRsmashrat$. Since $\Fu$ is the identity on morphisms, (N1) -- (N4) hold for $\Fu(\NA(M))$. This proves the Corollary.
\end{proof}

Let $B$ be a coalgebra. An $\ndN _0$-filtration $\cF = (\cF_n B)_{n \in \no}$ of $B$
is a family of subspaces $\cF_nB, n \geq 0,$ of $B$ such that
\begin{itemize}
  \item []$\cF _nB$ is a subspace of $\cF _mB$ for all $m,n\in \ndN _0$ with $n\le m$,
  \item []$B=\bigcup _{n\in \ndN _0}\cF _nB $, and
  \item []$\Delta _B(x)\in \sum _{i=0}^n\cF _iB\otimes \cF _{n-i}B$ for all $x\in \cF _nB$,
    $n\in \ndN _0$.
\end{itemize}

\begin{lemma}\label{lem:F0}
  Let $B$ be a coalgebra having an $\ndN _0$-filtration $\cF $.
  Let $U\in {}^B\cM $ be a non-zero object.
  Then there exists $u\in U\setminus \{0\}$ such that
  $\delta (u)\in \cF_0B \otimes U$.
  \label{le:YDBNH}
\end{lemma}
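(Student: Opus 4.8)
The plan is to put a degree function on $U$ coming from the filtration and argue by minimal degree. For each $n\in\no$ set
\[
U_n = \{\,u\in U \mid \delta(u)\in \cF_nB\ot U\,\} = \delta^{-1}(\cF_nB\ot U).
\]
These are subspaces with $U_0\sub U_1\sub\cdots$, and since $B=\bigcup_n\cF_nB$ and each $\delta(u)$ is a finite sum, every element lies in some $U_n$; thus $\bigcup_n U_n=U$. As $U\neq0$ there is a smallest $n$ with $U_n\neq0$, and then $U_{n-1}=0$ (with the convention $\cF_{-1}B=0$). It suffices to prove $n=0$, for then any nonzero $u\in U_0$ satisfies $\delta(u)\in\cF_0B\ot U$, as required. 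So I would assume $n\ge1$ and derive a contradiction.

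First I would show the \emph{stability} $\delta(U_n)\sub\cF_nB\ot U_n$. For $u\in U_n$ write $\delta(u)=\sum_i b_i\ot u_i$ with $\{b_i\}\sub\cF_nB$ linearly independent. Coassociativity gives $\sum_i\Delta_B(b_i)\ot u_i=\sum_i b_i\ot\delta(u_i)$ in $B\ot B\ot U$. Since $b_i\in\cF_nB$, the filtration axiom yields $\Delta_B(b_i)\in\sum_{j=0}^n\cF_jB\ot\cF_{n-j}B$, so the left-hand side lies in $B\ot\cF_nB\ot U$ (its middle leg); as the $b_i$ are linearly independent, comparing first legs forces each $\delta(u_i)\in\cF_nB\ot U$, i.e.\ $u_i\in U_n$. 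Hence $\delta(U_n)\sub\cF_nB\ot U_n$. Writing $\pi_n:\cF_nB\to\cF_nB/\cF_{n-1}B$ for the projection, I set $\bar\delta=(\pi_n\ot\id):U_n\to(\cF_nB/\cF_{n-1}B)\ot U_n$, which is injective: if $\bar\delta(u)=0$ then $\delta(u)\in\cF_{n-1}B\ot U$, so $u\in U_{n-1}=0$.

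The crux is the vanishing $(\pi_n\ot\pi_n)\Delta_B(b)=0$ for $b\in\cF_nB$ when $n\ge1$: in a summand $x\ot y$ with $x\in\cF_jB$, $y\in\cF_{n-j}B$, one has $\pi_n(x)\neq0$ only for $j=n$, which forces $y\in\cF_0B\sub\cF_{n-1}B$ and hence $\pi_n(y)=0$. Applying $\pi_n\ot\pi_n\ot\id$ to the coassociativity identity above therefore kills the left-hand side, while the right-hand side becomes $(\id\ot\bar\delta)\bar\delta(u)$; thus $(\id\ot\bar\delta)\bar\delta(u)=0$. Since $\bar\delta$ is injective over the field $\fie$, so is $\id\ot\bar\delta$, whence $\bar\delta(u)=0$ and then $u=0$ for every $u\in U_n$, contradicting $U_n\neq0$. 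This forces $n=0$ and proves the lemma. I expect the main obstacle to be the bookkeeping in the middle step, namely extracting $\delta(U_n)\sub\cF_nB\ot U_n$ by a clean use of linearly independent first tensor legs, so that $\bar\delta$ is a genuine coaction on $U_n$ to which the identity $(\pi_n\ot\pi_n)\Delta_B=0$ (valid precisely because $n\ge1$) can be applied.
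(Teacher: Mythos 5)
Your proof is correct, and it takes a genuinely different route from the paper's. The paper gives two arguments: a short one via the coradical (since $B_0 \sub \cF_0 B$ by \cite[Lemma 5.3.4]{b-Montg93}, any irreducible subcomodule of $U$ has its coefficient coalgebra inside $\cF_0 B$, so $\delta^{-1}(\cF_0B \ot U) \neq 0$), and an explicit one that starts from an arbitrary $x \neq 0$, takes the maximal $m$ for which the $m$-fold iterated coaction survives the projection $\pi_0 : B \to B/\cF_0 B$ in its first $m$ legs, and then applies functionals killing the bottom layer to those legs to manufacture $y \neq 0$ with $\delta(y) \in \cF_0B \ot U$, maximality of $m$ doing the work. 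You instead run a one-step descent at the \emph{top} of the filtration: with $U_n = \delta^{-1}(\cF_nB \ot U)$ and $n$ minimal with $U_n \neq 0$, your stability step $\delta(U_n) \sub \cF_nB \ot U_n$ (correctly extracted via linearly independent first tensor legs and dual functionals), the injectivity of $\bar\delta = (\pi_n \ot \id)\delta$ on $U_n$ — which is precisely where minimality, i.e.\ $U_{n-1} = 0$, enters — and the vanishing $(\pi_n \ot \pi_n)\Delta_B|_{\cF_nB} = 0$ for $n \geq 1$ combine through coassociativity to give $(\id \ot \bar\delta)\bar\delta = 0$; since tensoring with a space preserves injectivity over $\fie$, this forces $U_n = 0$, a contradiction unless $n = 0$. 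Both your argument and the paper's explicit proof exploit the same feature of a coalgebra filtration (each term of $\Delta_B$ on $\cF_nB$ has legs of degrees summing to $n$, so some leg always drops), but you use it once, between the two adjacent layers $\cF_nB$ and $\cF_{n-1}B$, whereas the paper iterates coactions against the single quotient $B/\cF_0B$. What each buys: the coradical proof is shortest but imports comodule theory; the paper's explicit proof produces the degree-zero element inside the subcomodule generated by any given $x$; your proof is equally elementary and self-contained, and yields as byproducts the structural facts that each $U_n$ is $\delta$-stable within the filtration and that the induced ``top-symbol'' coaction $\bar\delta$ on the minimal layer is injective.
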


\begin{proof}
  The coradical $B_0$ of $B$ is contained in $\cF_0B$ by \cite[Lemma 5.3.4]{b-Montg93}. Hence $\delta^{-1}(\cF_0B \ot U) \neq0$, since for any irreducible subcomodule $U'\sub U$ there is a simple subcoalgebra $C'$ with $\delta(U') \sub C'\ot U'$.

  We give an alternative and more explicit proof. Let $x\in U\setminus \{0\}$. Then there exists $n\in \ndN _0$ with $\delta (x)\in \cF _nB\otimes U$.
  If $n=0$, we are done. Assume now that $n\ge 1$ and let $\pi _0:B\to B/\cF _0B$ be the canonical
  linear map.
  Since $\cF $ is a coalgebra filtration, there is a maximal $m\in \ndN _0$ such that
  \[ \pi _0(x_{(-m)})\otimes \cdots \otimes \pi _0(x_{(-1)})\otimes x_{(0)}\not=0, \]
  where $\delta (x)=x\_{-1}\otimes x\_0$.
  Let $f_1,\dots,f_m\in B^*$ with $f_i|_{B_0}=0$ for all $i\in \{1,\dots,m\}$
  such that
  \[ y:=f_1(x_{(-m)})\cdots f_m(x_{(-1)})x_{(0)}\not=0. \]
  Then $\delta (y)=f_1(x_{(-m-1)})\cdots f_m(x_{(-2)})x_{(-1)}\otimes x_{(0)}
  \in \cF _0B \otimes U$ by the maximality of $m$.
\end{proof}

\begin{lemma}\label{lem:ZgradK}
  Let $\Gamma $ be an abelian group with neutral element $0$, and $A$ a $\Gamma $-graded Hopf algebra.
  \begin{enumerate}
  \item Let $K$ be a Nichols algebra in $\ydA $, and $K(1)=\oplus _{\gamma \in \Gamma }K(1)_\gamma $
  a $\Gamma $-graded object in $\ydA $.
  Then there is a unique $\Gamma$-grading on $K$ extending the grading on $K(1)$. Moreover, $K(n)$ is $\Gamma$-graded in $\ydA$ for all $n \geq 0$.
  \item Let $K$ be a $\Gamma $-graded braided Hopf algebra in $\ydA $. Then the bosonization $K \# A$ is a $\Gamma$-graded Hopf algebra with $\deg K(\gamma) \# A(\lambda) = \gamma + \lambda$ for all $\gamma, \lambda \in \Gamma$.
\item Let $H \sub A$ be a Hopf subalgebra of degree {0}, and $\pi : A \to H$ a Hopf algebra map with $\pi \res H = \id$. Define $R= A^{\co H}$. Then $R$ is a $\Gamma$-graded braided Hopf algebra in $\ydH$ with $R(\gamma) = R \cap A(\gamma)$ for all $\gamma \in \Gamma$.
\end{enumerate}
\end{lemma}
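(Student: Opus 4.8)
The unifying device I would use throughout is that a $\Gamma$-grading compatible with all structure maps is the same as a $\fie[\Gamma]$-comodule structure: to say that $A$ is a $\Gamma$-graded Hopf algebra is to say that $\rho_A:A\to A\ot\fie[\Gamma]$, $a\mapsto a\ot\gamma$ for $a\in A(\gamma)$, is a Hopf algebra map, and to say that $M\in\ydA$ is $\Gamma$-graded is to say that the $A$-action and $A$-coaction of $M$ are homogeneous of degree $0$. Consequently the $\Gamma$-graded objects of $\ydA$ form again a braided monoidal category, with $(M\ot N)(\gamma)=\bigoplus_{\alpha+\beta=\gamma}M(\alpha)\ot N(\beta)$, and the forgetful functor to $\ydA$ is braided monoidal, exact and faithful; in particular the braiding $c$ of $\ydA$ is homogeneous of degree $0$ on $\Gamma$-graded objects. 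The whole proof then reduces to checking that every relevant map is homogeneous.

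For (1), I would first grade the braided tensor algebra: since $K(1)=\bigoplus_\gamma K(1)_\gamma$ is a $\Gamma$-graded object of $\ydA$, the algebra $T(K(1))$ carries the total grading $\deg(v_1\ot\cdots\ot v_n)=\deg v_1+\cdots+\deg v_n$, which is simultaneously its $\no$-grading and a $\Gamma$-grading in $\ydA$. The Nichols algebra is $K=\bigoplus_n T^n(K(1))/\ker\mathfrak{S}_n$, where the quantum symmetrizers $\mathfrak{S}_n$ are morphisms in $\ydA$ built from $c$; being composites of homogeneous maps, each $\mathfrak{S}_n$ is homogeneous, so $\ker\mathfrak{S}_n$ is a $\Gamma$-graded subobject and each $K(n)=T^n(K(1))/\ker\mathfrak{S}_n$ inherits a $\Gamma$-grading in $\ydA$. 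The induced product and coproduct of $K$ are then homogeneous, giving the desired grading extending that of $K(1)$. Uniqueness is immediate from (N3): $K$ is generated as an algebra by $K(1)$ and the product is homogeneous, so the degree of any product of elements of $K(1)$ is forced to be the sum of their degrees.

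For (2), I would simply read off the degrees from the bosonization formulas, putting $\deg(k\#a)=\deg k+\deg a$. In the smash product \eqref{smashproduct} (with $K,A$ in place of $R,H$), homogeneity of $\Delta_A$ and of the $A$-action on $K$ gives, for $k\in K(\gamma),a\in A(\lambda),k'\in K(\gamma'),a'\in A(\lambda')$, that $k(a\sw1\lact k')\#a\sw2a'$ lies in degree $\gamma+\gamma'+\lambda+\lambda'=\deg(k\#a)+\deg(k'\#a')$, so multiplication is homogeneous. In the cosmash coproduct \eqref{cosmash}, homogeneity of $\Delta_K$, of the $A$-coaction on $K$ and of $\Delta_A$ makes the two tensor factors $k\swo1{k\swo2}\sw{-1}a\sw1$ and ${k\swo2}\sw0a\sw2$ have degrees summing to $\gamma+\lambda$, so the coproduct is homogeneous. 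Since unit and counit are concentrated in degree $0$, $K\#A$ is a $\Gamma$-graded bialgebra with the stated grading, and its antipode is then automatically homogeneous.

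For (3) the point to settle first is that the projection $\pi:A\to H$ is homogeneous of degree $0$, i.e. $\pi(A(\gamma))=0$ for $\gamma\neq0$; this is exactly the content of $H$ being a Hopf subalgebra \emph{of degree $0$} together with $\pi$ being a morphism in the $\Gamma$-graded setting, and it is the only input beyond Section~\ref{sec:bosonization}. Granting it, I claim $R=A^{\co H}=\{r\in A\mid r\sw1\ot\pi(r\sw2)=r\ot1\}$ is a graded subspace: for $r=\sum_\gamma r_\gamma$ with $r_\gamma\in A(\gamma)$, homogeneity of $\Delta_A$ and $\pi$ forces $(\id\ot\pi)\Delta_A(r_\gamma)\in A(\gamma)\ot H$, and comparing $A$-degrees in $(\id\ot\pi)\Delta_A(r)=r\ot1$ yields $(\id\ot\pi)\Delta_A(r_\gamma)=r_\gamma\ot1$ for every $\gamma$, so each $r_\gamma\in R$ and $R=\bigoplus_\gamma(R\cap A(\gamma))$. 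Finally the braided Hopf algebra structure of $R$ is homogeneous for this grading, since the $H$-action \eqref{action}, the $H$-coaction \eqref{coaction}, the comultiplication \eqref{comult}, the antipode \eqref{antipode} and the multiplication inherited from $A$ are all composites of the homogeneous maps $\Delta_A,\pi,\cS$ and the multiplication of $A$; hence $R$ is a $\Gamma$-graded braided Hopf algebra in $\ydH$ with $R(\gamma)=R\cap A(\gamma)$. The main obstacle is precisely this bookkeeping in (3): one must ensure $\pi$ is homogeneous, so that degree-$0$ phenomena do not leak between $\Gamma$-degrees, before concluding that $R$ is graded; once $\pi$ is known to be graded, everything follows from the homogeneity of the structure maps already recorded in Section~\ref{sec:bosonization}.
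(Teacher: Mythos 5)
Your proof is correct and follows essentially the same route as the paper: part (1) is the paper's observation that the graded module and comodule maps make the infinitesimal braiding homogeneous, so that $K(n)$ and all structure maps of $K$, being determined by $c$ and $K(1)$ (your quantum-symmetrizer description $K(n)=T^n(K(1))/\ker\mathfrak{S}_n$ makes this explicit), inherit the grading, while (2) and (3) are exactly the routine homogeneity checks the paper dismisses as ``easily checked.'' Your two glosses are also sound: the antipode of a $\Gamma$-graded bialgebra is indeed automatically homogeneous (decompose $\cS$ into its homogeneous components and use uniqueness of convolution inverses, or in the bosonization simply read it off from \eqref{bigS}), and you are right that in (3) the hypotheses must be read as making $\pi$ homogeneous of degree $0$ --- without this the conclusion genuinely fails, and the paper's applications always satisfy it implicitly.
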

\begin{proof}
  (1) The module and comodule maps of $K(1)$ are $\Gamma $-graded and hence the infinitesimal
  braiding $c\in \Aut (K(1)\otimes K(1))$, being determined by the module and comodule maps,
  is $\Gamma $-graded.
  Now the claim of the lemma follows from the fact that
  $K(n)$ for $n\in \ndN $ as well as
  the structure maps of $K$ as a braided Hopf algebra
  are determined by $c$ and $K(1)$.

(2) and (3) are easily checked.
\end{proof}

We now study the projection of $H$-Yetter-Drinfeld Hopf algebras in Corollary \ref{corol:Hopfproj} in the case of Nichols algebras. Recall that for any $M,N\in \ydH $ there is a canonical surjection
$$\pi_{\NA(N)} : \NA(M \oplus N) \to \NA(N),\; \pi_{\NA(N)} \res N = \id, \pi_{\NA(N)} \res M = 0,$$
of braided Hopf algebras in $\ydH$.
It defines a canonical projection
$$\pi _{\NA (N) \# H} = \pi_{\NA(N)} \# \id :\NA (M\oplus N) \# H \to \NA (N) \# H$$
of  Hopf algebras.
Let $K =(\NA(M \oplus N) \# H)^{\co \NA(N) \# H}$ be the space of right $\NA(N) \# H$-coinvariant elements with respect to the projection $\pi_{\NA(N) \# H}$. Thus $K$ is a braided Hopf algebra in $\ydBNH$ with $\NA(N) \# H$-action
$$\ad : \NA(N) \# H \ot K \to K,\; a \ot x \mapsto (\ad a)x= a\sw1 x \cS(a\sw2),$$
and $\NA(N) \# H$-coaction
$$\delta_K : K \to \NA(N) \# H \ot K, x \mapsto \pi_{\NA(N) \# H}(x\sw1) \ot x\sw2.$$
Then by ~\cite[Lemma 3.1]{a-AHS10}, $K = \NA(M \oplus N)^{\co \NA(N)}$,
the space of right $\NA(N)$-coinvariant elements with respect to $\pi_{\NA (N)}$.

The bosonization $\NA(N) \# H$ is a $\mZ$-graded Hopf algebra with $\deg N =1$ and $\deg H = 0$. We always view the bosonizations of Nichols algebras in $\ydH$ as graded Hopf algebras in this way.

\begin{lemma}\label{lem:old}
 Let $M,N\in \ydH$ and
$K=(\NA (M \oplus N) \# H)^{\co \NA (N) \# H}$.
\begin{enumerate}
\item The standard $\no$-grading of $\NA(M \oplus N)$ induces an $\no$-grading on
$$W = (\ad \NA(N))(M) = \oplus_{n \in \no} (\ad N)^n(M)$$ with $\deg (\ad N)^n(M)=n +1$. Then $W$ is a $\mZ$-graded object in $\ydBNH$, where $W \sub K$ is a subobject in $\ydBNH$.
\item Assume that $M = \oplus_{i \in I} M_i$ is a direct sum of irreducible objects in $\ydH$. Let $W_i = (\ad\NA(N))(M_i)$ for all $i \in I$. Then $W = \oplus_{i \in I} W_i$ is a decomposition into irreducible subobjects $W_i$ in $\ydBNH$. For all $i \in I$, $W_i = \oplus_{n \geq 0}(\ad N)^n(M_i)$ is a $\ndZ$-graded object in the category of left Yetter-Drinfeld modules over $\NA(N) \# H$.
\end{enumerate}
\end{lemma}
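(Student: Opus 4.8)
The plan is to check the assertions of (1) directly from the definitions and to reduce (2) to the irreducibility of the blocks $W_i$. For the grading in (1): since $\NA(M\oplus N)$ is $\no$-graded with $\deg M=\deg N=1$ and, for $a\in N$, the action $(\ad a)x=a\sw1 x\cS(a\sw2)$ raises total degree by one (both summands of $\Delta(a)=a\ot 1+a\sw{-1}\ot a\sw0$ contribute degree one beyond $\deg x$), each $(\ad N)^n(M)$ sits in the homogeneous component of degree $n+1$; these being in direct sum, $W=\oplus_n(\ad N)^n(M)$ is $\no$-graded with $\deg(\ad N)^n(M)=n+1$. Each $(\ad N)^n(M)$ is the image of a morphism in $\ydH$ (the iterated braided adjoint action), hence a subobject in $\ydH$; in particular $W$ is $H$-stable, and since by \eqref{action} the $\ad H$-action on $\NA(M\oplus N)$ is the original $H$-action, $W$ is stable under $\ad(\NA(N)\# H)$. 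To see $M\sub K$ I would use \eqref{cosmash} to get $\Delta(m)=m\ot 1+m\sw{-1}\ot m\sw0$ in $\NA(M\oplus N)\# H$ for $m\in M$, so that $(\id\ot\pi_{\NA(N)\# H})\Delta(m)=m\ot 1$; hence $M\sub K$, and as $K$ is $\ad(\NA(N)\# H)$-stable, $W=(\ad\NA(N))(M)\sub K$ is an $\ad$-submodule.

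The remaining and computationally central point of (1) is that $W$ is a subcomodule, i.e. $\delta_K(W)\sub\NA(N)\# H\ot W$. I would prove by induction on $n$ that $\delta_K((\ad N)^n(M))\sub\NA(N)\# H\ot W$. The base case is the computation above, $\delta_K(m)=m\sw{-1}\ot m\sw0\in H\ot M$. For the inductive step, for $a\in N$ and $x\in(\ad N)^n(M)$ the Yetter--Drinfeld compatibility between $\ad$ and $\delta_K$ in $\ydBNH$ gives
\[
\delta_K((\ad a)x)=a\sw1\,x\swe{-1}\,\cS(a\sw3)\ot(\ad a\sw2)x\swe0,
\]
where $x\swe{-1}\ot x\swe0=\delta_K(x)$. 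By induction $x\swe0\in W$, so $(\ad a\sw2)x\swe0\in W$ by $\ad$-stability, while the first tensorand lies in $\NA(N)\# H$. This shows $W$ is a subobject in $\ydBNH$; as $\ad$ and $\delta_K$ are graded of degree $0$ (built from the graded maps of $\NA(M\oplus N)\# H$ and the graded projection $\pi_{\NA(N)\# H}$), $W$ is a $\mZ$-graded object in $\ydBNH$.

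For (2), linearity of $\ad$ gives $W=\sum_i W_i$ with $W_i=\oplus_n(\ad N)^n(M_i)$ a $\mZ$-graded subobject of $W$ by (1) applied to $M_i$. Granting that each $W_i$ is irreducible, directness is formal: $W_i\cap\sum_{j\neq i}W_j$ is a subobject of the irreducible $W_i$, hence $0$ or $W_i$, and $W_i\sub\sum_{j\neq i}W_j$ would force $M_i=W_i(1)\sub\sum_{j\neq i}M_j$, contradicting $M=\oplus_i M_i$; thus $W=\oplus_i W_i$. I expect the \emph{irreducibility} of $W_i$ to be the main obstacle. I would obtain it from \cite{a-AHS10}, where exactly this decomposition of $K$ into the blocks $(\ad\NA(N))(M_i)$ is established; a self-contained argument would run as follows. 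Let $0\neq U\sub W_i$ be a graded subobject and $d$ its minimal degree; minimality forces $\delta_K(U(d))\sub H\ot U(d)$, so $U(d)$ is a Yetter--Drinfeld submodule over $H$ of $(\ad N)^{d-1}(M_i)$. The crux is to show $d=1$, for which one must verify that the $\NA(N)$-part of $\delta_K$ is nontrivial on every nonzero element of $(\ad N)^{n}(M_i)$ with $n\geq 1$, ruling out $d\geq 2$. Once $d=1$, $U(1)$ is a nonzero $H$-subobject of the irreducible $M_i$, so $M_i\sub U$, and then $U=W_i$ because $W_i=\ad(\NA(N)\# H)(M_i)$. Each $W_i$ is $\mZ$-graded in $\ydBNH$ by (1), completing the proof.
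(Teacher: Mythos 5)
Your proposal is correct and, at the points where it actually commits itself, follows the same route as the paper. For (1), where you invoke the Yetter--Drinfeld compatibility of $K\in\ydBNH$ (legitimate, since the Radford--Majid bosonization theory already makes $K$, with $\ad$ and $\delta_K$, an object of $\ydBNH$ before the lemma is stated) and then induct on $n$, the paper instead computes in one shot, for all $a\in\NA(N)\#H$ and $x\in M$, the identity $\delta_K\bigl((\ad a)(x)\bigr)=a\sw1 x\sw{-1}\cS(a\sw3)\ot(\ad a\sw2)(x\sw0)$, using only the primitivity of $x$ in the bosonization; since $W=(\ad(\NA(N)\#H))(M)$ this covers all of $W$ without induction, and the explicit formula simultaneously exhibits the $\mZ$-gradedness of the coaction, which your write-up asserts but does not display. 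Both arguments verify the same identity, so the difference is cosmetic. For (2), the paper's entire proof is the citation of \cite[Prop.~3.4, Prop.~3.5]{a-AHS10}, which is also your primary route; your formal deduction of the directness of $W=\oplus_{i}W_i$ from irreducibility via the degree-one components is sound and is a small bonus over the paper, which gets directness from the citation as well.

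One caution concerning your ``self-contained'' sketch of irreducibility: it stalls exactly at the crux you name, namely that $d=1$, i.e.\ that no nonzero element of $(\ad N)^n(M_i)$ with $n\ge 1$ is $H$-coinvariant. This is not a formal consequence of the Yetter--Drinfeld axioms; it is the genuinely Nichols-algebra-theoretic content of \cite[Prop.~3.5]{a-AHS10}, proved there using the pairing with the dual Nichols algebra (skew derivations), ultimately resting on axiom (N4). Note that in the present paper the corresponding coinvariance statement, Lemma~\ref{lem:ydBNH}, is deduced \emph{assuming} irreducibility, so that implication cannot be reversed for free. A further minor point: your sketch tests only $\mZ$-graded subobjects $U$, whereas statement (2) asserts irreducibility of the $W_i$ as subobjects of $\ydBNH$; for an arbitrary subobject one would additionally invoke Lemma~\ref{lem:F0} to produce a nonzero element $u\in U$ with $\delta_K(u)\in H\ot U$ before running your argument. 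As long as (2) rests on the citation, as in the paper, your proposal is complete.
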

\begin{proof}
(1) Let $a \in N$ and $x \in \NA(M \oplus N)$ a homogeneous element. Then $\Delta_{\NA(M \oplus N) \# H}(a) = a \ot 1 + a\sw{-1} \ot a\sw0$, since $a$ is primitive in $\NA(N)$. Hence
$$(\ad a)(x) = ax - (a\sw{-1} \lact x) a\sw0$$
is of degree $\deg x + 1$ in $\NA(M \oplus N)$. This implies the decomposition of $W$. Moreover, $W \sub K$, since $M \sub K$.

Since $W= (\ad \NA(N) \# H)(M)$, it is clear that $W$ is stable under the adjoint action of $\NA(N) \# H$, and that
$$\ad : \NA(N) \# H \ot W \to W$$
is $\mZ$-graded. To see that $W \sub K$ is a $\NA(N) \# H$-subcomodule, and that the comodule structure
$$W \to \NA(N) \# H \ot W$$
is $\mZ$-graded, we compute $\delta_K$ on elements of $W$. For all $a \in \NA(N) \# H$ and $x \in M$,
\begin{align*}
\delta_K(\ad a)(x) &= (\pi_{\NA(N) \# H} \ot \id)\Delta_{\NA(M \oplus N) \# H}(\ad a)(x)\\
&=\pi_{\NA(N) \# H}\left(a\sw1 x\sw1 \cS(a\sw4)\right) \ot a\sw2 x\sw2 \cS(a\sw3)\\
&=\pi_{\NA(N) \# H}\left(a\sw1 x \cS(a\sw4)\right) \ot a\sw2  \cS(a\sw3)\\
&\phantom{aa}+ \pi_{\NA(N) \# H}\left(a\sw1 x\sw{-1} \cS(a\sw4)\right) \ot a\sw2 x\sw0 \cS(a\sw3)\\
&= a\sw1 x\sw{-1} \cS(a\sw3) \ot (\ad a\sw2)(x\sw0).
\end{align*}
Thus the $\NA(N) \#H$-costructure of $W$ is well-defined and $\mZ$-graded.

(2) is shown in ~\cite[Prop. 3.4, Prop. 3.5]{a-AHS10}.
\end{proof}

\begin{propo}\label{prop:old}
Let $M,N\in \ydH$ and
$K=(\NA (M \oplus N) \# H)^{\co \NA (N) \# H}$. Then there is a unique isomorphism
$$K\cong \NA \big( (\ad \NA (N))(M) \big)$$
  of braided Hopf algebras in $\ydBNH$ which is the identity on $(\ad \NA (N))(M)$.
\end{propo}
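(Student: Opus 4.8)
The plan is to show that $K$, graded by $M$\nobreakdash-degree, is precisely the Nichols algebra of $W=(\ad\NA(N))(M)$ in $\ydBNH$, i.e.\ to verify (N1)--(N4) for $K$ with $K(1)=W$. First I would set up the gradings. Since $M$ and $N$ occupy different degrees, $T(M\oplus N)$ is $\no\times\no$\nobreakdash-graded by (number of $M$\nobreakdash-factors, number of $N$\nobreakdash-factors), and the defining ideal of $\NA(M\oplus N)$ is homogeneous for this bigrading; hence $\NA(M\oplus N)$, and the bosonization $\mathcal A:=\NA(M\oplus N)\#H$ with $H$ in degree $(0,0)$, is an $\no\times\no$\nobreakdash-graded braided Hopf algebra. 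The projection $\pi_{\NA(N)\#H}$ is bigraded and kills everything of positive $M$\nobreakdash-degree, so the biproduct decomposition $\mathcal A\cong K\#\mathcal C$, $\mathcal C:=\NA(N)\#H$, is compatible with $M$\nobreakdash-degree. Thus $K$ is an $\no$\nobreakdash-graded braided Hopf algebra in $\ydBNH$, with $K(n)$ the part of $M$\nobreakdash-degree $n$; here $K(0)=\mathcal C^{\co\mathcal C}=\fie$, and by Lemma~\ref{lem:old} the subobject $W\subseteq K$ lies in $K(1)$.

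Next I would show that $K$ is generated as an algebra by $W$, which (by the paragraph above) forces $K(1)=W$ and makes $K$ a pre-Nichols algebra of $W$. Recall $K=\mathcal A^{\co\mathcal C}$ is a subalgebra of $\mathcal A$ and the multiplication $K\ot\mathcal C\to\mathcal A$ is bijective. Let $K'\subseteq K$ be the subalgebra generated by $W$; it is stable under the adjoint $\mathcal C$\nobreakdash-action because $W$ is (Lemma~\ref{lem:old}) and $K$ is a $\mathcal C$\nobreakdash-module algebra, so $K'\mathcal C=K'\#\mathcal C$ is a subalgebra of $\mathcal A$. As $\NA(M\oplus N)$ is generated by $M\oplus N$ with $M\subseteq W\subseteq K'$ and $N\subseteq\mathcal C$, this subalgebra contains a generating set of $\mathcal A$; hence $K'\#\mathcal C=\mathcal A=K\#\mathcal C$, and bijectivity of the biproduct multiplication gives $K'=K$. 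By the universal property of $\NA(W)$ quoted in the text, the inclusion $W\hookrightarrow K$ then yields a surjection $q:K\to\NA(W)$ of $\no$\nobreakdash-graded braided bialgebras in $\ydBNH$ that is the identity on $W$.

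The heart of the argument is $P(K)=K(1)$, for then $K$ satisfies (N1)--(N4) and is itself the Nichols algebra of $W$. The inclusion $K(1)\subseteq P(K)$ is the standard fact that degree-one elements of a connected graded braided bialgebra are primitive. For the reverse inclusion I would set $P'=\bigoplus_{n\ge 2}\bigl(P(K)\cap K(n)\bigr)$ and argue by contradiction. Since $\Delta_K$ is a morphism in $\ydBNH$, $P(K)$ is a subobject, and as the $\mathcal C$\nobreakdash-coaction preserves $M$\nobreakdash-degree, $P'$ is a nonzero left $\mathcal C$\nobreakdash-comodule. The degree filtration of the graded coalgebra $\mathcal C$ is a coalgebra filtration with $\cF_0\mathcal C=H$, so Lemma~\ref{lem:F0} provides $0\neq u\in P'$ with $\mathcal C$\nobreakdash-coaction in $H\ot P'$, i.e.\ $u$ has trivial $\NA(N)$\nobreakdash-coaction. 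A direct computation, starting from the biproduct formula for $\Delta_{\mathcal A}(u)$ and using the triviality of the $\NA(N)$\nobreakdash-coaction together with \eqref{comult} and \eqref{bigS}, then gives $\Delta_{\NA(M\oplus N)}(u)=u\ot 1+1\ot u$, so that $u$ is a braided primitive of $\NA(M\oplus N)$ in $\ydH$. But $\NA(M\oplus N)$ is a Nichols algebra, hence its primitives are concentrated in total degree $1$, whereas $u\in P'$ has $M$\nobreakdash-degree $\ge 2$ and therefore total degree $\ge 2$. This contradiction forces $P'=0$ and $P(K)=K(1)=W$.

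Having verified (N1)--(N4), $K$ is a Nichols algebra of $W$, so the surjection $q$ is an isomorphism of braided Hopf algebras in $\ydBNH$; and any isomorphism $K\to\NA(W)$ that is the identity on $W$ is unique because $K$ is generated by $W$. I expect the main obstacle to be the primitivity step: both the extraction, via Lemma~\ref{lem:F0}, of a primitive with trivial $\NA(N)$\nobreakdash-coaction and the translation of such a braided $\mathcal C$\nobreakdash-primitive into an honest braided primitive of $\NA(M\oplus N)$ require careful bookkeeping of the two braidings, the coaction, and the grading; the remaining bialgebra and generation checks are routine given Lemma~\ref{lem:old} and the biproduct structure.
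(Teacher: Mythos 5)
Your proposal is correct and follows essentially the same route as the paper: grade $K$ by $M$-degree, show $K$ is generated by $W=K(1)$, and exclude higher-degree primitives by using Lemma \ref{lem:F0} with the coalgebra filtration of $\NA(N)\#H$ having $\cF_0=H$ to extract a primitive $u$ whose coaction lies in $H\ot K$, which then becomes a primitive of $\NA(M\oplus N)$ of degree $\geq 2$, contradicting the Nichols property. The only substantive deviations are that you prove generation of $K$ by $W$ directly (stability of the subalgebra generated by $W$ under the adjoint $\NA(N)\#H$-action plus bijectivity of the biproduct multiplication $K\ot(\NA(N)\#H)\to\NA(M\oplus N)\#H$) where the paper simply cites \cite[Prop.\,3.6]{a-AHS10}, and that you install the grading via an $\no\times\no$-bigrading where the paper invokes Lemma \ref{lem:ZgradK}; both substitutions are sound, and yours has the minor merit of making the generation step self-contained.
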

\begin{proof}
 Since $M \oplus N$ is a $\mZ$-graded object in $\ydH$ with $\deg M = 1$ and $\deg N =0$, the Nichols algebra $\NA(M \oplus N)$ is a $\mZ$-graded braided Hopf algebra in $\ydH$ by Lemma \ref{lem:ZgradK} (1). Hence the bosonization $\NA(M \oplus N) \# H$ is a $\mZ$-graded Hopf algebra with $\deg M =1, \deg N =0, \deg H =0$. By Lemma \ref{lem:ZgradK} (3), $K$ is a $\mZ$-graded Hopf algebra in $\ydBNH$. By ~\cite[Prop. 3.6]{a-AHS10}, $K$ is generated as an algebra by $K(1)= (\ad \NA(N))(M)$. Hence $K(n) = K(1)^n$ for all $n \geq 1$, and $K(0) = \fie$.

 It remains to prove that all homogeneous primitive elements of $K$ are of
 degree one. Let $n\in \ndN _{\ge 2}$ and let $U\subseteq K(n)$ be a subspace
 of primitive elements. We have to show that $U=\{0\}$.
 By Remark~\ref{rem:primitive} (1) we may assume that $U\in \ydBNH $.
 Since $\NA (N)\#H$ has a coalgebra filtration $\cF $ with $\cF _0=H$ and $\cF _1=NH+H$,
 Lemma~\ref{le:YDBNH} implies that there exists a nonzero primitive element $u\in U$ with
 $\delta (u)\in H\otimes U$. Then $u$ is primitive in $\NA (M \oplus N)$. Indeed,
 $$ \Delta _{K\#(\NA (N)\#H)}=u\otimes 1+1u_{[-1]}\otimes u_{[0]}
    =u\otimes 1+u_{(-1)}\otimes u_{(0)},
 $$
 and hence $\Delta _{K\#\NA (N)}(u)=(\vartheta \otimes \id )\Delta _{K\#(\NA (N)\#H)}(u)
 =u\otimes 1+1\otimes u$.

 Since $K(n) = \left(\ad \NA(N)(M)\right)^n$,
 $u$ is an element of degree at least $n$ in the usual grading of $\NA(M \oplus N)$.
 This contradicts the assumption that
 $\NA (M \oplus N)$ is a Nichols algebra.
\end{proof}

Next we prove the converse of the above proposition under additional restrictions, see
Proposition~\ref{propo:KBNichols}.

Let $C$ be a coalgebra, $D \sub C$ a subcoalgebra, and $W$ a left $C$-comodule with comodule structure $\delta : W \to C \ot W$.
We denote  the largest $D$-subcomodule of $W$ by
\begin{align*}
W(D) = \{ w \in W \mid \delta(w) \in D \ot W\}.
\end{align*}

\begin{lemma}
  Let $N\in \ydH $ and $W\in \ydBNH $.
  Assume that $\oplus _{i\in I}W_i$ is a decomposition of $W$ into irreducible objects
  in the category of $\ndZ$-graded left Yetter-Drinfeld modules over $\NA(N) \# H$.
   Let $M=W(H)$, and $M_i=M\cap W_i$ for all $i \in I$.
  \begin{enumerate}
    \item $M=\oplus _{i\in I}M_i$ is a decomposition into irreducible
      objects in $\ydH $.
    \item For all $i\in I$, $M_i$ is the $\ndZ $-homogeneous component
      of $W_i$ of minimal degree, and $W_i=B(N)\cdot M_i = \oplus_{n \geq 0} N^n \lact M_i$.
  \end{enumerate}
  \label{lem:ydBNH}
\end{lemma}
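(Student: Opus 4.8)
The plan is to describe $M=W(H)$ through the given decomposition of $W$ and through the $\NA(N)$-submodules it generates. First I would check that $M=W(H)$ is an object of $\ydH$. Since $H$ is a subcoalgebra of $\NA(N)\#H$, the space $W(H)$ is the largest subcomodule whose coaction factors through $H$, so $\delta(W(H))\sub H\ot W(H)$; and restricting the Yetter--Drinfeld condition of $\NA(N)\#H$ to $H$ shows that $W(H)$ is $H$-stable and lies in $\ydH$. Because $W=\oplus_{i\in I}W_i$ is a decomposition into subcomodules, $W(H)=\oplus_{i\in I}W_i(H)$ and $M_i=M\cap W_i=W_i(H)$, so it suffices to treat a single irreducible $W_i$. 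Applying Lemma~\ref{le:YDBNH} to $\NA(N)\#H$ with the coalgebra filtration given by its grading (for which $\cF_0=H$) shows $M_i=W_i(H)\neq0$.

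The engine of the argument, which I would establish next, is the claim that \emph{if $V\sub W_i(H)$ is a subobject in $\ydH$, then $\NA(N)\lact V$ is a subobject of $W_i$ in $\ydBNH$}. Stability under the action is immediate from $\NA(N)\#H=\NA(N)H$ and $H\lact V\sub V$. For stability under the coaction I would use the Yetter--Drinfeld condition
$$\delta(a\lact v)=a\sw1\,v\sw{-1}\,\cS(a\sw3)\ot a\sw2\lact v\sw0$$
for $a\in\NA(N)$ and $v\in V$: here $v\sw{-1}\in H$ and $v\sw0\in V$ since $V$ is an $H$-subcomodule, and writing $a\sw2=\sum_k r_k h_k$ under the identification $\NA(N)\#H\cong\NA(N)\ot H$ gives $a\sw2\lact v\sw0=\sum_k r_k\lact(h_k\lact v\sw0)\in\NA(N)\lact V$, so the right-hand side lies in $(\NA(N)\#H)\ot(\NA(N)\lact V)$.

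From this tool the three assertions follow by degree bookkeeping, using that $\NA(N)\#H$ is graded with $\deg N=1$, $\deg H=0$ and $\NA(N)(0)=\fie$. The subobject $M_i=W_i(H)$ is graded, and for any nonzero homogeneous $v\in M_i$ the subobject $\langle v\rangle_H$ it generates in $\ydH$ lies in a single degree $k$; hence $\NA(N)\lact\langle v\rangle_H$ is a nonzero graded subobject of $W_i$ contained in degrees $\geq k$, and irreducibility of $W_i$ forces it to be all of $W_i$. Thus $W_i$ is bounded below with minimal degree $k$, and since this holds for every such $v$, the module $M_i$ is concentrated in the minimal degree $n_0^i$; combined with the evident inclusion $W_i(n_0^i)\sub M_i$ (the coaction of a minimal-degree element lands in $H\ot W_i(n_0^i)$) this gives $M_i=W_i(n_0^i)$, matching the parallel statement of Proposition~\ref{propo:Fdm}. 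Taking $V=M_i$ yields $W_i=\NA(N)\lact M_i=\oplus_{n\geq0}N^n\lact M_i$, the last equality by comparing degrees. Finally, for a nonzero subobject $V\sub M_i$ in $\ydH$ one gets $\NA(N)\lact V=W_i$, and comparing the degree-$n_0^i$ parts (where only $\NA(N)(0)=\fie$ contributes) gives $V=M_i$; hence each $M_i$ is irreducible and $M=\oplus_{i\in I}M_i$ is the desired decomposition in $\ydH$.

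The step I expect to be the main obstacle is the coaction-closure of $\NA(N)\lact V$: it genuinely requires both that $V$ be an $H$-subcomodule (to keep $v\sw{-1}\in H$) and that $V$ be $H$-stable (to absorb the $H$-part of $a\sw2$ through the splitting $\NA(N)\#H\cong\NA(N)\ot H$). Once this is in place, the boundedness below, the identification $M_i=W_i(n_0^i)$, and the irreducibility of the $M_i$ are all purely degree-theoretic and need no finiteness assumption on $W$.
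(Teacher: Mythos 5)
Your proof is correct and follows essentially the same route as the paper's: nontriviality of $M_i=W_i(H)$ via the coalgebra-filtration Lemma~\ref{le:YDBNH} with $\cF_0=H$, then generating $W_i$ as $\NA(N)\lact M_i'$ from a homogeneous subobject and using the grading (with $\NA(N)(0)=\fie$, $\deg N=1$) together with irreducibility of $W_i$ to pin $M_i$ down as the minimal-degree component and deduce its irreducibility and $W_i=\oplus_{n\ge 0}N^n\lact M_i$. The only difference is cosmetic: you spell out the coaction-closure of $\NA(N)\lact V$ via the Yetter--Drinfeld condition, a step the paper's proof asserts without comment, and you obtain $W_i(n_0^i)\sub M_i$ by a direct degree computation on the coaction rather than reading it off from the degree-$n$ component of the generated submodule.
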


\begin{proof}
  Let $W= \oplus_{n \in \mZ} W(n)$ be the $\mZ$-grading of $W$ in $\ydBNH $. Then
  $M$ is a $\mZ$-graded object in $\ydH$ with homogeneous components $M(n) = M \cap W(n)$ for all $n \in \mZ$.
It is clear that $M = \oplus_{i \in I} M_i$, where $M_i = M \cap W_i = W_i(H)$ for all $i$.

Let $i \in I$. By Lemma \ref{lem:F0}, $M_i \neq 0$.
  Let $0 \neq M'_i$ be a homogeneous subobject of $M_i$ in $\ydH $, and let $n$ be its degree.
  Then the $\NA (N)\#H$-module $W'_i:=\NA (N)\cdot M'_i$ is a $\ndZ $-graded
  subobject of $W_i$ in $\ydBNH $,
  the homogeneous components of $W'_i$ have degrees $\ge n$,
  and the degree $n$ component of $W'_i$ coincides with $M'_i$ since $\NA (N)(0)=\fie $
  and $\deg N = 1$.
  Thus the irreducibility of $W_i$ implies that $M_i=M'_i$ is irreducible and it is the
  homogeneous component of $W_i$ of minimal degree.

Finally, for all $i \in I$ and $n \in \no$,
  $$\deg (N^n \lact M_i) = n + \deg M_i,$$
  since the multiplication map $\NA(N) \# H \ot W_i \to W_i$ is graded. It follows that $W_i = \oplus_{n \geq 0}N^n \lact M_i$ for all $i$.
\end{proof}

\begin{propo}
  Let $N\in \ydH$ and $W\in \ydBNH$. Assume that $W$ is a semisimple object in the category of $\ndZ$-graded left Yetter-Drinfeld modules over $\NA(N) \# H$. Let $K=\NA (W)$ be the Nichols algebra
  of $W$ in $\ydBNH $, and define $M = W(H)$.
  Then there is a unique isomorphism
  $$K\#\NA (N)\cong \NA (M \oplus N)$$
   of braided Hopf algebras in $\ydH $ which is the identity on $M \oplus N$.
  \label{propo:KBNichols}
\end{propo}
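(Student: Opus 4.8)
The plan is to realise $K\#\NA (N)$ as a pre-Nichols algebra of $M\oplus N$, to produce the resulting universal surjection onto $\NA (M\oplus N)$, and then to identify it with the multiplication isomorphism coming from Proposition \ref{prop:old} and Corollary \ref{corol:Hopfproj}(1), once the two relevant degree-one objects of $\ydBNH$ have been matched. Throughout I would use Lemma \ref{lem:ydBNH}, which gives $M=W(H)=\oplus_{i\in I}M_i$ with each $M_i$ irreducible in $\ydH$, and $W=\oplus_{i\in I}W_i$ with $W_i=\NA (N)\cdot M_i$ and $M_i$ the homogeneous component of $W_i$ of minimal degree.

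First I would fix the gradings. Normalising the $\ndZ$-grading of $W$ so that each $M_i$ sits in degree $1$ (hence $N^n\lact M_i$ in degree $n+1$), Lemma \ref{lem:ZgradK}(1) extends it to an $\no$-grading of $K=\NA (W)$ with $K(0)=\fie$ and with degree-one component the minimal-degree part $M$ of $W$; Lemma \ref{lem:ZgradK}(2),(3) then make $B:=K\#\NA (N)$ an $\no$-graded braided Hopf algebra in $\ydH$ with $\deg M=\deg N=1$. One checks the pre-Nichols axioms for $B$: $B(0)=\fie$ and $B(1)=M\oplus N$ are immediate from the grading, and $B$ is generated by $M\oplus N$ because $\NA (N)=\langle N\rangle$, $K=\langle W\rangle$, and inside the bosonization the $\NA (N)$-action on $K$ is the braided adjoint action, so $W=\NA (N)\cdot M=(\ad \NA (N))(M)$ lies in the subalgebra generated by $M\oplus N$. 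The universal property of $\NA (M\oplus N)$ now yields a surjection $\rho\colon B\to \NA (M\oplus N)$ of $\no$-graded braided Hopf algebras in $\ydH$ which is the identity on $M\oplus N$.

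The core step is the injectivity of $\rho$, and this is where semisimplicity is used. Since $\rho$ is the identity on $N$, it intertwines the Hopf projections onto $\NA (N)$, hence (via the coinvariant projection) restricts to a surjection of the coinvariant subalgebras $K=B^{\co \NA (N)}\to \NA (M\oplus N)^{\co \NA (N)}$ that is a morphism in $\ydBNH$. By Proposition \ref{prop:old} the target equals $\NA (W')$ with $W'=(\ad \NA (N))(M)$, and in degree one $\rho$ induces a morphism $W\to W'$ in $\ydBNH$ which is the identity on $M$. Writing $W=\oplus_{i}W_i$ into irreducibles, $\rho|_{W_i}$ is nonzero (it is the identity on $M_i\neq 0$), hence injective by irreducibility of $W_i$ and surjective onto $W'_i=(\ad \NA (N))(M_i)$; therefore $W\cong W'$ in $\ydBNH$. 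Consequently $K=\NA (W)\cong \NA (W')=\NA (M\oplus N)^{\co \NA (N)}$, and composing the induced isomorphism of bosonizations with the multiplication isomorphism $\NA (W')\#\NA (N)\xrightarrow{\ \sim\ }\NA (M\oplus N)$ of Corollary \ref{corol:Hopfproj}(1) gives the desired isomorphism $K\#\NA (N)\cong \NA (M\oplus N)$, still the identity on $M\oplus N$. Uniqueness is automatic, since both algebras are generated by $M\oplus N$.

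The main obstacle is precisely the identification $W\cong W'$ in the third step: semisimplicity is exactly what upgrades the \emph{a priori} surjection $W\to W'$ to an isomorphism, and without it $\rho$ need not be injective. The remaining points are technical but routine: the grading normalisation ensuring $B(1)=M\oplus N$, the verification that $\rho$ restricts to the coinvariants as a graded $\ydBNH$-morphism carrying $K(1)=W$ into $\NA (W')(1)=W'$, and the standard fact that in the braided bosonization $K\#\NA (N)$ the $\NA (N)$-module structure of $K$ is realised by the braided adjoint action.
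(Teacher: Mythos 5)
Your proof is correct and takes essentially the same route as the paper: the same regrading of $W$ and $K$ via Lemmas \ref{lem:ydBNH} and \ref{lem:ZgradK}, the same verification that $K\#\NA(N)$ is a pre-Nichols algebra of $M\oplus N$ (using the braided adjoint action formula to get generation), the same universal surjection $\rho$, the same restriction to $\NA(N)$-coinvariants compared against Proposition \ref{prop:old}, and the same use of irreducibility of the $W_i$ to force bijectivity of the induced degree-one map $W\to W'=(\ad\NA(N))(M)$. The only cosmetic difference is the endgame: the paper upgrades $\rho$ itself to an isomorphism (bijectivity of $\rho_2$ gives bijectivity of $\rho_1$ by the universal property of $\NA(W)$, hence of $\rho=\rho_1\#\id$), whereas you discard $\rho$ after extracting $W\cong W'$ and reassemble the isomorphism from functoriality of the Nichols algebra construction together with the multiplication isomorphism of Corollary \ref{corol:Hopfproj}(1), which yields the same map.
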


\begin{proof}
  Let $\oplus _{i\in I}W_i$ be a decomposition of $W$ into irreducible objects in the category of $\ndZ$-graded left Yetter-Drinfeld modules over $\NA(N) \# H$. For all $i \in I$, let $M_i = W_i \cap M$. By Lemma~\ref{lem:ydBNH} (2), we can define a new $\mZ$-grading on $W$ by
  $$\deg (N^n\cdot M_i)=n+1$$
  for all $n\in \ndN _0, i \in I$. Then $W$ is a $\mZ$-graded object in $\ydBNH$. Because of Lemma~\ref{lem:ZgradK} (1),
  and since $W=K(1)$, we know that $K$ is a $\ndZ $-graded braided Hopf algebra with this new
  $\ndZ $-grading on $K(1)$. Thus by Lemma~\ref{lem:ZgradK} (2) and (3), $K\#(\NA (N) \# H)$ is a $\ndZ $-graded Hopf algebra, and
  $$R:=K\#\NA (N)= \left(K\#(\NA (N) \# H)\right)^{\co H}$$
  is a $\ndZ $-graded braided Hopf algebra
  in $\ydH $
  with $\fie 1$ as degree $0$ part and with
  $M \oplus N$ as degree $1$ part.

  Let $m \in M$ and $b \in \NA(N)$. Then
  \begin{align}\label{ruleR}
  b \lact m = b\swo1 ({b\swo2}\sw{-1} \lact m) \cS_{\NA(N)}({b\swo2}\sw0)
  \end{align}
  in the algebra $R=K\#\NA (N)$.
  Since $K$ is generated as an algebra by $K(1)$, and since
  $K(1)=\NA (N)\cdot M$, we conclude from \eqref{ruleR} that
  $R$ is generated as an algebra by $R(1)=M \oplus N$. Thus $R$ is a  pre-Nichols algebra of $M \oplus N$.

  By the universal property of the Nichols algebra $\NA(M \oplus N)$, there is a surjective homomorphism
  $$\rho : R \to \NA(M \oplus N),\; \rho \res M \oplus N = \id,$$
  of $\no$-graded Hopf algebras in $\ydH$. Then
  $$ \rho \# \id : R \# H \to \NA(M \oplus N) \# H$$
  is a surjective map of Hopf algebras. Let $K'= (\NA(M \oplus N) \# H)^{\co \NA(N) \# H}$. Since the multiplication maps
  $$R \# H \to K \# (\NA(N) \# H),\; K' \# (\NA(N) \# H) \to \NA(M \oplus N) \# H$$
  are bijective maps of Hopf algebras, the map $\rho \# \id$ defines a surjective map of Hopf algebras
  $$\rho': K \# (\NA(N) \# H) \to K'\# (\NA(N) \# H),\; \rho'\res (M \oplus N) = \id.$$
  The action of $\NA(N) \# H$ on $K$ is the adjoint action in $K \# (\NA(N) \# H)$.
  Since the algebras $K$ and $K'$ are generated by $(\ad \NA(N))(M)$ on both sides, $\rho'$ induces a map
  $$\rho_1 : K \to K', \; \rho_1 \res M = \id,$$
  of $\no$-graded braided Hopf algebras in $\ydBNH$, and a map
$$\rho_2 : \NA(N) \lact M \to (\ad\NA(N))(M), \; \rho_2 \res M = \id,$$
  in $\ydBNH$. Since $(\ad\NA(N))(M_i)$ is irreducible in $\ydBNH$ for all $i \in I$, it follows that $\rho_2$ is bijective. Hence $\rho_1$ is bijective by the universal property of the Nichols algebra $K = \NA(W)$. Thus $\rho = \rho_1 \# \id_{\NA(N)}$ is bijective.
  \end{proof}

We  now apply Corollary \ref{cor:third} to Nichols algebras.
Let $N \in \ydH$ be finite-dimensional. Then the dual vector space $N^* = \Hom(N,\fie)$ is an object in $\ydH$ with
\begin{align*}
\langle h \lact \xi,x\rangle &= \langle \xi,\cS(h) \lact x\rangle,\\
\xi\sw{-1} \langle\xi\sw0,x\rangle &= \cS^{-1}(x\sw{-1}) \langle \xi,x\sw0\rangle
\end{align*}
for all $\xi \in N^*, x \in N, h \in H$, where $\langle \;,\; \rangle : N^* \ot N \to \fie$ is the evaluation map. The Nichols algebras of the finite-dimensional Yetter-Drinfeld modules $N^*$ and $N$ have finite-dimensional $\no$-homogeneous components, and there is a canonical pairing
$\langle \;,\; \rangle : \NA(N^*) \ot \NA(N) \to \fie$
extending the evaluation map such that the conditions \eqref{pair1} -- \eqref{pair5} and \eqref{gradedpair} hold, see for example \cite[Prop.\, 1.10]{a-AHS10}. Let
$$(\Fu_N,\fu_N) : \ydBNsmashrat \to \ydBNdualsmashrat$$
be the functor of Corollary \ref{cor:third} with respect to the canonical dual pairing.

\begin{theor}
  \label{theor:NtoN}
  Let $n \geq 1$, and let $M_1,\dots, M_n,N$ be finite-dimensional objects in $\ydH$. Assume that for all $1 \leq i \leq n$, $M_i$ is  irreducible in $\ydH$, and
  that $(\ad \NA(N))(M_i)$ is a finite-dimensional subspace of $\NA (\oplus_{i = 1}^n M_i \oplus N)$. For all $i$
  let $V_i = \Fm 0(\ad \NA (N))(M_i)$, and let
  $K=\NA (\oplus_{i = 1}^n M_i \oplus N)^{\co \NA(N)}$.
  \begin{enumerate}
  \item The modules $V_1,\dots, V_n$ are irreducible  in $\ydH$, $\Fu_N(K)$ is a braided Hopf algebra in $\ydBNdualsmashrat$, and there is a unique isomorphism
  $$\Fu_N(K)\#\NA (N^*)\cong \NA (\oplus_{i=1}^n V_i\oplus N^*)$$
  of braided Hopf algebras in $\ydH $ which is the identity on $\oplus_{i=1}^n V_i\oplus N^*$.
 \item For all $1 \leq i \leq n$, let $m_i = \max \{m \in \no \mid (\ad N)^m(M_i) \neq 0 \}$, and $W_i = (\ad \NA(N))(M_i)$. Then
 \begin{align*}
 W_i &= \mathop\oplus_{n=0}^{m_i} (\ad N)^n(M_i),& V_i &= (\ad N)^{m_i}(M_i),\\
 \Fu_N(W_i) &\cong \mathop\oplus_{n=0}^{m_i} (\ad N^*)^n(V_i),& M_i &\cong (\ad N^*)^{m_i}(V_i)
 \end{align*}
 for all $i$.
  \end{enumerate}
\end{theor}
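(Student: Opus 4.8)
The plan is to realize $K$ as a Nichols algebra in $\ydBNH$ via Proposition~\ref{prop:old}, transport it through the braided monoidal isomorphism $(\Fu_N,\fu_N)$ using Corollary~\ref{cor:Nichols}, and then recognize the image as a bosonization of a Nichols algebra by applying the converse construction of Proposition~\ref{propo:KBNichols}. Write $M=\oplus_{i=1}^n M_i$, put $R=\NA(N)$, $R^\vee=\NA(N^*)$, and set $W=(\ad\NA(N))(M)=\oplus_{i=1}^n W_i$ with $W_i=(\ad\NA(N))(M_i)$. By Proposition~\ref{prop:old} there is an isomorphism $K\cong\NA(W)$ of braided Hopf algebras in $\ydBNH$ which is the identity on $W$, and by Lemma~\ref{lem:old} each $W_i=\oplus_{n\ge0}(\ad N)^n(M_i)$ is an irreducible $\mZ$-graded object in the category of Yetter-Drinfeld modules over $\NA(N)\#H$. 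Since every $W_i$ is finite-dimensional and $\mZ$-graded, the $\NA(N)$-action annihilates all components of sufficiently high degree, so $W$ is rational as an $R$-module; by (N3) and Lemma~\ref{lem:rationaltensor} the algebra $K=\NA(W)$ is then rational as well, hence $K\in\ydBNsmashrat$. In particular Corollary~\ref{corol:Hopfproj}(2) already exhibits $\Fu_N(K)\#\NA(N^*)$ as a Hopf algebra in $\ydH$ with a projection onto $\NA(N^*)$; the new content is to identify it as a Nichols algebra.

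For this I set $W':=\Fu_N(W)\in\ydBNdualsmashrat$ and apply Corollary~\ref{cor:Nichols} to get $\Fu_N(K)\cong\NA(W')$ as $\no$-graded braided Hopf algebras in $\ydBNdualsmashrat$; this in particular shows $\Fu_N(K)$ is a braided Hopf algebra. Because $(\Fu_N,\fu_N)$ is an isomorphism of categories which is the identity on the underlying objects of $\ydH$, the decomposition $W'=\oplus_i\Fu_N(W_i)$ is a decomposition into irreducible $\mZ$-graded Yetter-Drinfeld modules over $\NA(N^*)\#H$, so $W'$ is semisimple in the sense required by Proposition~\ref{propo:KBNichols}.

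The heart of the argument is the computation $W'(H)=\oplus_{i=1}^n V_i$. By Proposition~\ref{propo:Fdm}, for each $i$ the filtrations select the extreme homogeneous components of $W_i$: $\Fd 0 W_i=M_i$ (minimal degree, which is $W_i(H)$ by Lemma~\ref{lem:ydBNH}(2)) and $\Fm 0 W_i=V_i=(\ad N)^{m_i}(M_i)$ (maximal degree). The grading-reversal identities of Corollary~\ref{cor:third}(2),(3) then give $\Fd 0\Fu_N(W_i)=\Fm 0 W_i=V_i$, so $V_i$ is exactly the minimal $R^\vee$-codegree part $\Fu_N(W_i)(H)$; summing, $W'(H)=\oplus_i V_i$. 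Now Proposition~\ref{propo:KBNichols}, applied with $N^*$ in place of $N$ and $W'$ in place of $W$, yields the unique isomorphism
\[
\Fu_N(K)\#\NA(N^*)=\NA(W')\#\NA(N^*)\cong\NA\big(\mathop\oplus_{i=1}^n V_i\oplus N^*\big)
\]
of braided Hopf algebras in $\ydH$ which is the identity on $\oplus_i V_i\oplus N^*$, and the irreducibility of each $V_i$ in $\ydH$ follows from Lemma~\ref{lem:ydBNH}(1) applied to $W'$. This proves part~(1). Part~(2) is then read off: Proposition~\ref{propo:Fdm} and Lemma~\ref{lem:ydBNH}(2) give $W_i=\oplus_{n=0}^{m_i}(\ad N)^n(M_i)$ and $V_i=(\ad N)^{m_i}(M_i)$, while, since $\Fu_N$ is the identity on underlying $H$-Yetter-Drinfeld modules, $W_i$ and $\Fu_N(W_i)$ have equal length, so $\Fu_N(W_i)=\oplus_{n=0}^{m_i}(\ad N^*)^n(V_i)$ with top component $\Fm 0\Fu_N(W_i)=\Fd 0 W_i=M_i\cong(\ad N^*)^{m_i}(V_i)$.

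I expect the main obstacle to be the careful bookkeeping of the two different gradings — the standard Nichols $\no$-grading, in which all of $M\oplus N$ sits in degree one, versus the $\mZ$-grading with $\deg N=0$ that makes $\NA(N)\#H$ the graded base and turns each $W_i$ into a graded Yetter-Drinfeld module — together with the verification that $(\Fu_N,\fu_N)$ transports this latter structure while reversing degrees. Only once this is established can one legitimately invoke the semisimplicity hypothesis of Proposition~\ref{propo:KBNichols} and the identification of extreme homogeneous components via Proposition~\ref{propo:Fdm} for the image object $W'$.
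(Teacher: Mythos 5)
Your proposal is correct and follows essentially the same route as the paper: identify $K\cong\NA(W)$ via Proposition~\ref{prop:old}, note rationality of the finite-dimensional $\mZ$-graded module $W$, transport through $(\Fu_N,\fu_N)$ via Corollary~\ref{cor:Nichols}, use the filtration-reversal identities of Corollary~\ref{cor:third}(2),(3) to compute $\Fu_N(W)(H)=\Fm0 W=\oplus_i V_i$ and verify graded semisimplicity, and conclude with Proposition~\ref{propo:KBNichols}. The only (harmless) deviations are cosmetic: you obtain irreducibility of the $V_i$ from Lemma~\ref{lem:ydBNH}(1) applied to $\Fu_N(W)$ where the paper cites Proposition~\ref{propo:Fdm} directly, and in part~(2) your ``equal length'' phrasing is an informal stand-in for the paper's argument that irreducibility forces $\Fu_N(W_i)=\oplus_{n=0}^{m_i}(N^*)^nV_i$, with the grading reversal then giving $M_i\cong(\ad N^*)^{m_i}(V_i)$.
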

\begin{proof}
(1) Let $W = (\ad \NA(N))(M)$.  By Lemma \ref{lem:old} (2), $W_1,\dots,W_n$ are
irreducible objects in $\ydBNH$, $W = \oplus_{i=1}^n W_i$, and for all
$1 \leq i \leq n$, $M_i$ is the $\mZ$-homogeneous component of $W_i$ of minimal degree.
By Proposition~\ref{propo:Fdm}, the Yetter-Drinfeld modules $V_1,\dots,V_n\in \ydH $ are
irreducible.
By Proposition \ref{prop:old}, $K$ is isomorphic to the Nichols algebra of $W$ in
$\ydBNH$. Since $(\ad \NA(N))(M)$ is a finite-dimensional and $\mZ$-graded object
in $\ydBNH$, it is a rational  $\NA(N)$-module.
Therefore $\Fu_N(\NA(W)) \cong \NA(\Fu_N(W))$ by Corollary \ref{cor:Nichols}.

Hence there is a unique isomorphism $\Fu_N(K) \cong \NA(\Fu_N(W))$ of braided Hopf algebras in $\ydBNdualsmashrat$ which is the identity on $\Fu_N(W)$. Recall that
$$\Fu_N(W)(H) = \Fd0 \Fu_N(W) = \Fm0 W = \oplus_{i=1}^n V_i$$
by Corollary \ref{cor:third} (3). Then by Proposition \ref{propo:KBNichols} there is a unique isomorphism
$$\Fu_N(K) \# \NA(N^*) \cong \NA (\oplus_{i=1}^n V_i\oplus N^*)$$
 of braided Hopf algebras in $\ydH$ which is the identity on $\oplus_{i=1}^n V_i\oplus N^*$ which proves (1). For the last conclusion we have to check the assumptions of Proposition \ref{propo:KBNichols}, that is, $\Fu_N(W) \in \ydBNdualsmashrat$ is a semisimple $\ndZ$-graded Yetter-Drinfeld module. By Corollary \ref{cor:third} (2), $\Fu_N(W)$ is $\ndZ$-graded, and it is semisimple  since $W$ is semisimple by Lemma \ref{lem:old} and $\Fu_N$ is an isomorphism by Corollary \ref{cor:third}.

(2) Let $i\in \{1,\dots,n\}$. The first equation follows from the definition of $W_i$
and the second from Proposition~\ref{propo:Fdm} for $R=\NA (N)$ with $\deg N=1$ and
$\deg (\ad N)^n(M_i)=1+n$ for all $n\ge 0$.
By Corollary~\ref{cor:third}, $\Fu _N(W_i)=W_i$ is $\ndZ $-graded with homogeneous components
$(\ad N)^n(M_i)$ of degree $-n-1$. Moreover, $V_i=\Fd 0\Fu _N(W_i)$ by the proof of (1),
and hence $\Fu _N(W_i)=\oplus _{n=0}^{m_i}(N^*)^nV_i$ since $\Fu _N(W_i)$
is irreducible. In particular, $M_i=(N^*)^{m_i}V_i$. These equations imply
the remaining claims of (2).
\end{proof}

\begin{remar}
Theorem \ref{theor:NtoN} still holds if we replace the canonical pairing in the definition of $(\Fu_N,\fu_N)$ by any dual pairing
$\langle \;,\; \rangle : \NA(N^*) \ot \NA(N) \to \fie$
satisfying \eqref{pair1} -- \eqref{pair5} and \eqref{gradedpair}.
\end{remar}

The definition of
the Weyl groupoid of a Nichols algebra of a semisimple Yetter-Drinfeld module over $H$
is based on
\cite[Thm.\,3.12]{a-AHS10},
see also \cite[Sect.\,3.5]{a-AHS10} and \cite[Thm.\,6.10, Sect.\,5]{a-HeckSchn10}.
To see that Theorem~\ref{theor:NtoN} can be considered as an alternative approach to the
definition of the Weyl groupoid, we introduce some notations.

Let $\theta \geq 1$ be a natural number. Let $\cF_{\theta}$ denote the class of all families
$M = (M_1,\dots,M_{\theta})$, where $M_1,\dots,M_{\theta} \in \ydH$
are finite-dimensional irreducible Yetter-Drinfeld modules. If $M \in \cF_{\theta}$, we define
$$\NA(M)=\NA(M_1 \oplus \cdots \oplus M_{\theta}).$$
For families $M,M'\in \cF_\theta $, we write $M \cong  M'$, if $M_j \cong M_j'$ in $\ydH $ for all $j$.

For $1 \leq i \leq \theta$ and $M \in \cF_{\theta}$, we say that the
{\em $i$-th reflection $R_i(M)$ is defined} if for
all $j \neq i$ there is a natural number $m_{ij}^M \geq 0$ such that
$(\ad M_i)^{m_{ij}^M}(M_j)$ is a non-zero finite-dimensional subspace
of $\NA(M)$, and $(\ad M_i)^{m_{ij}^M+1}(M_j)=0$. Assume that $R_i(M)$ is defined.
Then we set $R_i(M) = (V_1,\dots,V_{\theta})$, where
\begin{align*}
V_j &= \begin{cases}
V_i^*, &\text{ if } j=i,\\
(\ad M_i)^{m^M_{ij}}(M_j), &\text{ if } j \neq i,
\end{cases}
\end{align*}
For all $j \neq i$, let $a_{ij}^M = -m_{ij}^M$.
By \cite[Lemma\,3.22]{a-AHS10}, $(a_{ij}^M)$ with $a_{i i}^M=2$ for all $i$
is a generalized Cartan matrix.

The next Corollary follows from a restatement of Theorem \ref{theor:NtoN}. Thus we obtain a new proof of \cite[Thm.\,3.12(2)]{a-AHS10} which allows to define the Weyl groupoid of $M \in \cF_{\theta}$.

 \begin{corol} \cite[Thm.\,3.12(2)]{a-AHS10}
Let  $M \in \cF_{\theta}$, and $1 \leq i \leq \theta$. Assume that $R_i(M)$ is defined. Then
$R_i(M) \in \cF_{\theta}$, $R_i^2(M)$ is defined, $R_i^2(M) \cong M$, and $a_{ij}^M = a_{ij}^{R_i(M)}$ for all $1 \leq j \leq \theta$.
\end{corol}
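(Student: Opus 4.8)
The plan is to reduce the entire statement to one application of Theorem~\ref{theor:NtoN} with distinguished module $N = M_i$ and remaining family $(M_j)_{j \neq i}$. The assumption that $R_i(M)$ is defined says exactly that for each $j \neq i$ the subspace $(\ad \NA(M_i))(M_j) = \bigoplus_{n=0}^{m_{ij}^M}(\ad M_i)^n(M_j)$ of $\NA(M)$ is nonzero and finite-dimensional, which, together with the irreducibility of the $M_j$, is precisely the standing hypothesis of Theorem~\ref{theor:NtoN}. In that theorem's notation the index set is $\{j : j \neq i\}$, the role of $N$ is played by $M_i$, and $W_j = (\ad\NA(M_i))(M_j)$, $V_j = (\ad M_i)^{m_{ij}^M}(M_j)$. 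Part~(1) then yields at once that each $V_j$ (for $j \neq i$) is finite-dimensional and irreducible in $\ydH$; since $V_i = M_i^*$ is finite-dimensional and irreducible as $M_i$ is, we obtain $R_i(M) \in \cF_\theta$.

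The core of the argument is to run a second reflection at the same index $i$ on $R_i(M)$, whose distinguished module is now the $i$-th component $V_i = M_i^* = N^*$. For this I would invoke part~(2) of Theorem~\ref{theor:NtoN}, which gives for each $j \neq i$
\[
\Fu_N(W_j) \cong \bigoplus_{n=0}^{m_{ij}^M}(\ad N^*)^n(V_j), \qquad M_j \cong (\ad N^*)^{m_{ij}^M}(V_j),
\]
where the adjoint action of $N^* = V_i$ is taken inside $\NA(R_i(M)) = \NA(\bigoplus_k V_k)$. Because $\Fu_N$ is the identity on underlying vector spaces, the left-hand side is finite-dimensional, so $(\ad\NA(V_i))(V_j)$ is a nonzero finite-dimensional subspace of $\NA(R_i(M))$, and the displayed truncation shows $(\ad V_i)^{m_{ij}^M}(V_j) \neq 0$ while $(\ad V_i)^{m_{ij}^M+1}(V_j) = 0$. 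This is exactly the assertion that $R_i^2(M) = R_i(R_i(M))$ is defined, with $m_{ij}^{R_i(M)} = m_{ij}^M$; hence $a_{ij}^{R_i(M)} = -m_{ij}^{R_i(M)} = -m_{ij}^M = a_{ij}^M$ for $j \neq i$, while for $j = i$ the equality reads $2 = 2$.

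Finally I would identify $R_i^2(M)$ with $M$ componentwise. For $j \neq i$ the $j$-th component of $R_i^2(M)$ equals $(\ad V_i)^{m_{ij}^{R_i(M)}}(V_j) = (\ad N^*)^{m_{ij}^M}(V_j) \cong M_j$ by the second isomorphism above; for $j = i$ the $i$-th component is $V_i^* = (M_i^*)^* \cong M_i$, using finite-dimensionality of $M_i$. Thus $R_i^2(M) \cong M$, which completes the proof. The step demanding the most care — the genuine obstacle behind the short combinatorial conclusion — is verifying that the operator $\ad N^*$ occurring in Theorem~\ref{theor:NtoN}(2) really is the adjoint action inside the Nichols algebra $\NA(R_i(M))$ of the reflected family, so that the maxima $m_{ij}^{R_i(M)}$ are computed in the correct algebra; this identification is precisely what the braided monoidal isomorphism $(\Fu_N,\fu_N)$ and the isomorphism $\Fu_N(K)\#\NA(N^*) \cong \NA(R_i(M))$ of Theorem~\ref{theor:NtoN}(1) supply.
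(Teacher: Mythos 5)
Your proposal is correct and follows the paper's own route: the paper derives this corollary precisely as a restatement of Theorem~\ref{theor:NtoN} (applied with $N=M_i$ and the family $(M_j)_{j\neq i}$), with part~(1) giving $R_i(M)\in\cF_\theta$ via Proposition~\ref{propo:Fdm}, and part~(2) giving $m_{ij}^{R_i(M)}=m_{ij}^M$ and $M_j\cong(\ad N^*)^{m_{ij}^M}(V_j)$, hence $R_i^2(M)\cong M$ and $a_{ij}^M=a_{ij}^{R_i(M)}$. Your closing remark correctly identifies where the real content lies, namely that the isomorphism $\Fu_N(K)\#\NA(M_i^*)\cong\NA(R_i(M))$ is what legitimizes computing the adjoint powers $(\ad V_i)^n(V_j)$ inside $\NA(R_i(M))$.
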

In the situation of the last Corollary, let $K_i^M = \NA(M)^{\co \NA(M_i)}$ with respect to the projection $\NA(M) \to \NA(M_i)$. Then
$$K_i^M \# \NA(M_i) \cong \NA(M)$$
by bosonization, and
$$\Fu(K_i^M) \# \NA(M_i^*) \cong \NA(R_i(M))$$
by Theorem \ref{theor:NtoN}.


\providecommand{\bysame}{\leavevmode\hbox to3em{\hrulefill}\thinspace}
\providecommand{\MR}{\relax\ifhmode\unskip\space\fi MR }
\providecommand{\MRhref}[2]{%
  \href{http://www.ams.org/mathscinet-getitem?mr=#1}{#2}
}
\providecommand{\href}[2]{#2}

\end{document}